\newtheorem{theorem}{Theorem}[section]
\newtheorem{lemma}{Lemma}[section]
\newtheorem{corollary}{Corollary}[section]
\theoremstyle{definition}
\newtheorem{definition}{Definition}[section]
\theoremstyle{remark}
\newtheorem{remark}{Remark}[section]
\numberwithin{equation}{section}
\begin{document}
	
	
	\title{A new technique for solving Sobolev type fractional multi-order evolution equations}
	
	\date{}
	

	\author[]{Nazim I. Mahmudov\thanks{ Corresponding author.  Email: \texttt{nazim.mahmudov@emu.edu.tr}}}
	\author[]{Arzu Ahmadova \thanks{Email: \texttt{arzu.ahmadova@emu.edu.tr}}}
	\author[]{Ismail T. Huseynov \thanks{Email: \texttt{ismail.huseynov@emu.edu.tr}}}

	\affil {Eastern Mediterranean University, Department of Mathematics, Gazimagusa, 99628, TRNC, Mersin 10, Turkey  }
	
	
	\maketitle
	

	\begin{abstract}
		\noindent A strong inspiration for studying Sobolev type  fractional evolution equations comes from the fact that have been verified to be useful tools in the modeling of many physical processes.  We introduce a novel technique for solving Sobolev type fractional evolution equations with multi-orders in a Banach space. We propose a new Mittag-Leffler type function which is generated by linear bounded operators and investigate their properties which are productive for checking the candidate solutions for multi-term fractional differential equations. Furthermore, we propose an exact analytical representation of solutions for multi-dimensional fractional-order dynamical systems with nonpermutable and permutable matrices.
	\end{abstract}

	 
	\textit{Mathematics Subject Classification:} 26A33, 34A08, 34A12, 34A37, 34G20, 35R11, 35R12.
	 	
	\textit{Keywords:} Evolution equations, Caputo fractional differentiation operator, Mittag-Leffler type functions, Sobolev, nonpermutable linear operators
   	\section{Introduction}\label{Sec:intro}
   	Fractional differential equations (FDEs) are a generalization of the classical ordinary and partial differential equations, in which the order of differentiation is permitted to be any real (or even complex) number, not only a natural number.
   	FDEs containing not only one fractional derivative but also more than one fractional derivative are intensively studied in many physical processes \cite{Kilbas,Podlubny,Samko}. Many authors demonstrate two essential mathematical ways to use this idea: multi-term equations \cite{Ahmadova-Mahmudov,Bazhlekova E.,Hilfer-Luchko-Tomovski,Luchko and Gorenflo, Luchko} and multi-order systems \cite{A-H-F-M,ismail-arzu}.

   	Multi-term FDEs have been studied due to their applications in modelling, and solved using various mathematical methods. Finding the solution to these equations is an interesting and challenging subject that attracted many scientists in the last decades. Up to now, various analytical and computational techniques have been investigated to find the solution of multi-term FDEs, of which we mention a few as follows. Luchko and several collaborators \cite{Hilfer-Luchko-Tomovski,Luchko and Gorenflo,Luchko} used the method of operational calculus to solve multi-order FDEs with different types of fractional derivatives. In the realm of ordinary differential equations, Mahmudov and other collaborators  \cite{Ahmadova-Mahmudov,Mahmudov-Huseynov-Aliev-Aliev} have derived an analytical representation of solutions for special cases of fractional differential equations with multi-orders, namely: Langevin and Bagley-Torvik equations involving scalar coefficients and permutable matrices by using Laplace transform method and fractional analogue of variation of constants formula, respectively, while the other authors \cite{Pak-Choi-Sin-Ri} have solved multi-term differential equations in Riemann-Liouville's sense with variable coefficients applying a new method to construct analytical solutions. 
   	
   	Several results have been investigated on solving multi-dimensional time-delay deterministic and stochastic systems with permutable matrices \cite{arzu-mahmudov,Huseynov and Mahmudov} in classical and fractional senses. In \cite{diblik}, Diblik et al. have considered inhomogeneous system of linear differential equations of second order with multiple different delays \& pairwise permutable matrices and represented a solution of corresponding initial value problem by using matrix polynomials. Khusainov and other collaborators in \cite{Khusainov-permutable,Khusainov-Shuklin} have proposed exact analytical representations of linear autonomous time-delay systems with commutative matrices. Pospisil \cite{Pospisil} has introduced a representation of the solution of delayed differential equation, the condition that the linear parts are given by pairwise permutable matrices. In \cite{Medved-Pospisil}, Medved and Pospisil have tackled this strong condition (commutativity of matrices) and derived a representation of solutions of functional differential equations with nonconstant coefficients and variable delays. Recently, Mahmudov
   	\cite{mahmudov3} has introduced a fractional analogue of delayed matrices cosine and sine in the commutative case i.e., $AB=BA$ to solve the sequential Riemann-Liouville type linear time-delay systems whilst Liang et al. \cite{liang} have also obtained an explicit solution of the differential equation with pure delay and sequential Caputo type fractional derivative.
   	
   	However, there are a few papers involving non-permutable matrices which are recently studied fractional time-continuous \cite{mahmudov1} and discrete \cite{mahmudov2}  systems with a constant delay using recursively defined matrix-equations, and also delayed linear difference equations \cite{Nmahmudov} applying $\mathscr{Z}$-transform technique by Mahmudov.

   	 Meanwhile, Sobolev type evolution equations and their fractional-order analogues have attracted a great deal of attention from applications' point of view  and studied by several authors \cite{Balachandran-1, Balachandran-2,Feckan-1, Wang-1,Wang-2} in recent decades. In \cite{Balachandran-1}, Balachandran and Dauer have derived sufficient conditions for controllability of partial functional differential systems of Sobolev type in a Banach space by using compact semigroups and Schauder's fixed point theorem. Moreover, Balachandran et al. \cite{Balachandran-2} have considered existence results of solutions for nonlinear impulsive integrodifferential equations of Sobolev type with nonlocal conditions via Krasnoselkii's fixed point technique. In terms of fractional differential equations, Wang et al. \cite{Wang-1} have investigated controllability results of Sobolev type fractional evolution equations in a seperable Banach space by using the theory of propagation family and contraction mapping principle. In addition, Feckan et al. \cite{Feckan-1} have presented controllability of fractional functional evolution sytems of Sobolev type with the help of new characteristic solution operators and well-known Schauder's fixed point approach. In addition, Mahmudov \cite{Mahmudov-Sobolev} have considered approximate controllability results for a class of fractional evolution equations of Sobolev type by using fixed point approach. In \cite{Wang-2}, Wang and Li have discussed stability analysis of fractional evolution equations of Sobolev type in Ulam-Hyers sense. In \cite{Chang-1}, Chang et al. have studied the asymptotic behaviour of resolvent operators of Sobolev type and their applications to the existence and uniqueness of mild solutions to fractional functional evolution equations in Banach spaces. Vijayakumar et al. \cite{Vijakumar-1} have presented approximate controllability results for Sobolev type time-delay differential systems of fractional-order in Hilbert spaces.

   	To the best of our knowledge, the fractional evolution equations of Sobolev type with non-permutable operators and two independent fractional orders of differentiation $\alpha$ and $\beta$ which are  assumed to be in the interval $(1,2]$ and $(0,1]$, respectively are an untreated topic in the present literature.
   	Thus, motivated by the above research works, we consider the following Cauchy problem for fractional evolution  equation of Sobolev type with orders  $1<\alpha\leq 2$ and $0<\beta \leq 1$ on $\mathbb{J}\coloneqq[0,T]$:
   	
   	\begin{align}\label{mtde}
   	\begin{cases}
   	 \left( \prescript{C}{}{D^{\alpha}_{0+}}Ey\right) (t) -A_{0} \left( \prescript{C}{}{D^{\beta}_{0+}}y\right) (t)=B_{0}y(t)+g(t), \quad t>0,\\
   	Ey(0)=\eta , \quad Ey^{\prime}(0)=\tilde{\eta},
   	\end{cases}
   	\end{align}
   	 where $\prescript{C}{}{D^{\alpha}_{0+}}$ and $ \prescript{C}{}{D^{\beta}_{0+}}$ Caputo fractional differential operators of orders $1<\alpha\leq2$ and $0<\beta\leq1$, respectively, with the lower limit zero, the operators $E: D(E)\subset X \to Y$, $A_{0}:D(A_{0})\subset X \to Y$ and $B_{0 }:D(B_{0})\subset X \to Y$ are linear, where $X$ and $Y$ are Banach spaces, $y(\cdot)$ is a $X$-valued function on $\mathbb{J}$, i.e., $y(\cdot):\mathbb{J}\to X$ and $\eta, \hat{\eta}\in Y$. In addition, $g(\cdot): \mathbb{J}\to Y$ is a continuous function. The domain $D(E)$ of $E$ becomes a Banach space with respect to $\|y\|_{D(E)}=\|Ey\|_{Y}, y \in D(E)$.
   
   The main idea is that under the hypotheses $(H_{1})$-$(H_{4})$ we transform Sobolev type fractional multi-term evolution equation with linear operators \eqref{mtde} to fractional-order evolution equation with multi-orders and linear bounded operators \eqref{mtde-1}. Secondly, we solve fractional evolution equation with nonpermutable linear bounded operators by using Laplace transform technique which is used as a necessary tool for solving and analyzing fractional-order differential equations and systems in \cite{Ahmadova-Mahmudov},\cite{Huseynov and Mahmudov},\cite{kexue},\cite{Sabatier-Moze-Farges}.
   Then we  propose exact analytical representation of a mild solution of \eqref{mtde-1} and \eqref{mtde},  respectively with the help of new defined Mittag-Leffler function which is expressed via linear bounded operators by removing the exponential boundedness of  a forced term $g(\cdot)$ and $\left( \prescript{C}{}{D}_{0+}^{\beta}x\right)(\cdot)$ for $\beta\in(0,1]$ (or $\left( \prescript{C}{}{D}_{0+}^{\alpha}x\right)(\cdot)$ for $\alpha\in(1,2])$ in both cases: with nonpermutable and permutable linear operators $A,B\in\mathscr{B}(Y)$.
   
   The structure of this paper contains important improvement in the theory of Sobolev type fractional multi-term evolution equations and is outlined as below. Section \ref{sec:2} is a preparatory section where we recall main definitions and results from fractional calculus, special functions and fractional differential equations. In Section \ref{sec:3}, we establish a new  Mittag-Leffler type function which is generated by linear bounded operators via a double infinity series and investigate some necessary properties of this function which are accurate tool for testing the candidate solutions of fractional-order dynamical equations. We also investigate that $Q_{k,m}^{A,B}$ with nonpermutable linear operators $A,B\in \mathscr{B}(Y)$ is a generalization of well-known Pascal's rule binomial coefficients.  Moreover, we introduce the sufficient conditions for exponential boundedness of \eqref{mtde-1} to guarantee the existence of Laplace integral transform of equation \eqref{mtde-1}. Then we solve multi-order fractional evolution equations  \eqref{mtde} and \eqref{mtde-1} with the help of Laplace integral transform. Meanwhile, we tackle this strong condition and verify that the sufficient conditions can be omitted easily. Section \ref{sec:4} deals with an analytical representation of a mild solution to Sobolev type evolution equations with commutative linear bounded operators. In addition, we propose exact solutions for multi-dimensional multi-term fractional dynamical systems with commutative and noncommutative matrices.  In Section \ref{sec:concl} we discuss our main contributions of this paper and future research work.

   	\section{Preliminary concept}\label{sec:2}

   	We embark on this section by briefly presenting some notations and definition fractional calculus and fractional differential equations \cite{Kilbas,Samko,Podlubny}  which are used throughout the paper. 
   	
   	 Let $\mathbb{C}^{2}\left(\mathbb{J},X\right)\coloneqq\left\lbrace y\in \mathbb{C}\left(\mathbb{J},X\right) :  y^{'},y^{''}\in\mathbb{C}\left(\mathbb{J},X\right) \right\rbrace $ denote the Banach space of functions $y(t)\in X$ for $t\in \mathbb{J}$ equipped with a norm $\|y\|_{\mathbb{C}^{2}(\mathbb{J},X)}=\sum\limits_{i=0}^{2}\sup\limits_{t\in \mathbb{J}}\|y^{(i)}(t)\|$. The space of all bounded linear operators from $X$ to $Y$ is denoted by $\mathscr{B}(X,Y)$ and $\mathscr{B}(Y,Y)$ is written as $\mathscr{B}(Y)$.

  \begin{definition}\label{RLI}\cite{Kilbas,Samko,Podlubny}
  	The fractional integral of  order $\alpha > 0 $   for a function  $\ g\in  \left( [0,\infty), \mathbb{R}\right) $  is defined by
  	\begin{equation}
  	\prescript{}{}(I^{\alpha}_{0^{+}}g)(t)=\frac{1}{\Gamma(\alpha)}\int\limits_0^t(t-s)^{\alpha-1}g(s)\,\mathrm{d}s \\, \quad t>0,
  	\end{equation}
  	where $\Gamma(\cdot)$ is the well-known Euler's gamma function.
  \end{definition}
  \begin{definition}\label{RLD} \cite{Kilbas,Samko,Podlubny}
  	The Riemann-Liouville fractional derivative of order $n-1<\alpha\leq n$, $n \in \mathbb{N}$ for a function  $\ g\in  \left( [0,\infty), \mathbb{R}\right) $ is  defined by 
  	\begin{equation}
  	(\prescript{RL}{}D^{\alpha}_{0^{+}}g)(t)=\frac{1}{\Gamma(n-\alpha)}\left( \frac{d}{dt}\right) ^{n}\int\limits_0^t(t-s)^{n-\alpha-1}g(s)\,\mathrm{d}s, \quad t>0,
  	\end{equation}
  	where the function $g(\cdot)$ has absolutely continuous derivatives up to order $n$.
  \end{definition}

 The following theorem and its corollary is regarding fractional analogue of the eminent Leibniz integral rule for general order $\alpha \in (n-1,n], n \in \mathbb{N}$ in Riemann-Liouville's sense which is more productive tool for the testing particular solution of inhomogeneous linear multi-order fractional differential equations with variable and constant coefficients is considered by Huseynov et al. \cite{Leibniz}. 
\begin{theorem}\label{thm-class}
	Let the function $K:J\times J\to\mathbb{R}$ be such that the following assumptions are fulfilled:
	
	(a) For every fixed $t\in J$, the function $\hat{K}(t,s)=\prescript{RL,t}{}{D^{\alpha-1}_{s+}}K(t,s)$ is measurable on $J$ and integrable on $J$ with respect to some $t^{*}\in J$;
	
	(b) The partial derivative $\prescript{RL,t}{}{D^{\alpha}_{s+}}K(t,s)$ exists for every interior point $(t, s) \in \hat{J} \times \hat{J}$;
	
	(c) There exists a non-negative integrable function $g$ such that  $\left|\prescript{RL,t}{}{D^{\alpha}_{s+}}K(t,s)\right| \leq g(s)$ for every interior point $(t, s) \in \hat{J}\times \hat{J}$;

	(d) The derivative $\frac{d^{l-1}}{dt^{l-1}}\lim\limits_{s\to t-0}\prescript{RL,t}{}{D^{\alpha-l}_{s+}}K(t,s)$, $l=1,2,\ldots,n$ exists for every interior point $(t, s) \in \hat{J} \times \hat{J}$.
	
	Then, the following relation holds true for fractional derivative in Riemann-Liouville sense under Lebesgue integration for any $t \in \hat{J}$:	
	\begin{equation}\label{thm-RL}
	\prescript{RL}{}{D^{\alpha}_{t_{0}+}}\int\limits_{t_{0}}^{t}K(t,s)\mathrm{d}s=\sum_{l=1}^{n}\frac{d^{l-1}}{dt^{l-1}}\lim\limits_{s\to t-0}\prescript{RL,t}{}{D^{\alpha-l}_{s+}}K(t,s)+\int\limits_{t_{0}}^{t}\prescript{RL,t}{}{D^{\alpha}_{s+}}K(t,s)\mathrm{d}s.  
	\end{equation}
	
	If we have $K(t,s)=f(t-s)g(s)$, $t_{0}=0$ and assumptions of Theorem \ref{thm-class} are fulfilled, then following equality holds true for convolution operator in Riemann-Liouville sense for any $n \in\mathbb{N}$:
	
	\begin{align}\label{Leibniz}
	\prescript{RL}{}{D^{\alpha}_{0+}}\int\limits_{0}^{t}f(t-s)g(s)\mathrm{d}s&=\sum_{l=1}^{n}\lim\limits_{s\to t-0} \prescript{RL,t}{}{D^{\alpha-l}_{s+}}f(t-s)\frac{d^{l-1}}{dt^{l-1}}\lim\limits_{s\to t-0}g(s)\nonumber\\&+\int\limits_{0}^{t}\prescript{RL,t}{}{D^{\alpha}_{s+}}f(t-s)g(s)\mathrm{d}s, \quad  t >0.
	\end{align}
	
where $\prescript{RL,t}{}{D^{\gamma}_{t_{0}+}}K(t,s)$ is a partial Riemann-Liouville fractional differentiation operator of order $\gamma > 0$ \cite{Kilbas}  with respect to $t$ of a function $K(t,s)$ of two variables with lower terminal $t_{0}$ and $J=[t_{0},T]$,  $\hat{J}=(t_{0},T)$.
\end{theorem}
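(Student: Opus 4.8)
The plan is to reduce the fractional statement to the classical Leibniz rule for differentiation under the integral sign, applied iteratively, after first peeling off the fractional-integral part of the Riemann--Liouville operator. Writing $n-1<\alpha\le n$ and unfolding Definition \ref{RLD}, I would start from
\begin{equation*}
\prescript{RL}{}{D^{\alpha}_{t_{0}+}}\int_{t_0}^t K(t,s)\,\mathrm{d}s = \frac{1}{\Gamma(n-\alpha)}\frac{d^n}{dt^n}\int_{t_0}^t (t-\tau)^{n-\alpha-1}\left(\int_{t_0}^{\tau}K(\tau,s)\,\mathrm{d}s\right)\mathrm{d}\tau.
\end{equation*}
The first key step is an application of Fubini/Dirichlet's formula to interchange the order of integration over the triangle $t_0\le s\le\tau\le t$, which turns the right-hand side into $\frac{d^n}{dt^n}\int_{t_0}^t \prescript{RL,t}{}{D^{\alpha-n}_{s+}}K(t,s)\,\mathrm{d}s$, since the inner $\tau$-integral is precisely the partial fractional integral of order $n-\alpha$, i.e.\ the partial Riemann--Liouville operator of nonpositive order $\alpha-n$.

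The second step is to differentiate $n$ times under the integral sign. Setting $\phi_j(t,s)=\prescript{RL,t}{}{D^{\alpha-n+j}_{s+}}K(t,s)$ for $j=0,1,\ldots,n$, I would use two facts: that $\partial_t\phi_j=\phi_{j+1}$ (differentiation raises the order of a Riemann--Liouville integral/derivative by one) and the elementary rule $\frac{d}{dt}\int_{t_0}^t\phi_j(t,s)\,\mathrm{d}s=\lim_{s\to t-0}\phi_j(t,s)+\int_{t_0}^t\phi_{j+1}(t,s)\,\mathrm{d}s$. A finite induction on the number of derivatives then yields
\begin{equation*}
\frac{d^n}{dt^n}\int_{t_0}^t\phi_0(t,s)\,\mathrm{d}s=\sum_{j=0}^{n-1}\frac{d^{n-1-j}}{dt^{n-1-j}}\lim_{s\to t-0}\phi_j(t,s)+\int_{t_0}^t\phi_n(t,s)\,\mathrm{d}s,
\end{equation*}
and relabelling $l=n-j$ recovers \eqref{thm-RL} exactly, with $\phi_n=\prescript{RL,t}{}{D^{\alpha}_{s+}}K$. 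Hypotheses (a)--(d) are calibrated precisely to legitimize these manipulations: (c) supplies the integrable majorant needed to pull $\frac{d}{dt}$ through the integral via dominated convergence, (a) and (b) guarantee that the order-$(\alpha-1)$ and order-$\alpha$ partial derivatives are well defined and integrable so that the base and terminal steps make sense, and (d) ensures that each boundary term $\frac{d^{l-1}}{dt^{l-1}}\lim_{s\to t-0}\prescript{RL,t}{}{D^{\alpha-l}_{s+}}K(t,s)$ exists.

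For the convolution specialization $K(t,s)=f(t-s)g(s)$, the plan is to invoke the formula just proved and simplify each term using that $g(s)$ does not depend on $t$, so it factors out of every partial $t$-derivative, giving $\prescript{RL,t}{}{D^{\gamma}_{s+}}[f(t-s)g(s)]=g(s)\,\prescript{RL,t}{}{D^{\gamma}_{s+}}f(t-s)$. A change of variable $v=\tau-s$ shows that $\prescript{RL,t}{}{D^{\alpha-l}_{s+}}f(t-s)$ depends on $t$ and $s$ only through $t-s$; hence its one-sided limit $\lim_{s\to t-0}\prescript{RL,t}{}{D^{\alpha-l}_{s+}}f(t-s)$ is a constant independent of $t$ and can be pulled outside $\frac{d^{l-1}}{dt^{l-1}}$, leaving $\frac{d^{l-1}}{dt^{l-1}}\lim_{s\to t-0}g(s)$ as the remaining factor, which produces \eqref{Leibniz}. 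The main obstacle I anticipate is the rigorous justification of repeatedly differentiating under the integral sign while the variable upper limit $t$ appears simultaneously in the integrand (through the kernel and through its partial fractional derivatives of increasing order) and in the limit of integration. Controlling the boundary contributions $\lim_{s\to t-0}\phi_j(t,s)$---which are genuine when $\alpha<n$ but must be read as the fractional-integral evaluation rather than a pointwise value of $K$---and verifying that condition (c) furnishes a dominating function uniform enough to interchange limit and integral at every one of the $n$ stages, is where the real work lies; the algebraic reindexing at the end is routine once the analytic interchanges are secured.
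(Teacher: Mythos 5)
The paper does not actually prove this theorem: it is quoted from the authors' companion work \cite{Leibniz} and used here as an imported tool, so there is no in-paper proof to compare against. Judged on its own, your strategy is the correct and standard one for this result: writing $\prescript{RL}{}{D^{\alpha}_{t_0+}}=\frac{d^n}{dt^n}I^{n-\alpha}_{t_0+}$, using Fubini on the triangle $t_0\le s\le\tau\le t$ to identify $I^{n-\alpha}_{t_0+}\int_{t_0}^{t}K(t,s)\,\mathrm{d}s$ with $\int_{t_0}^{t}\prescript{RL,t}{}{D^{\alpha-n}_{s+}}K(t,s)\,\mathrm{d}s$, and then iterating the classical rule $\frac{d}{dt}\int_{t_0}^{t}\phi_j(t,s)\,\mathrm{d}s=\lim_{s\to t-0}\phi_j(t,s)+\int_{t_0}^{t}\phi_{j+1}(t,s)\,\mathrm{d}s$ with $\phi_j=\prescript{RL,t}{}{D^{\alpha-n+j}_{s+}}K$; your reindexing $l=n-j$ and the specialization to $K(t,s)=f(t-s)g(s)$ (where $g$ factors out and the boundary limits depend only on $t-s$) both check out. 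The one place where your sketch is thinner than a complete proof is the analytic justification at the intermediate stages: hypothesis (c) dominates only $\phi_n=\prescript{RL,t}{}{D^{\alpha}_{s+}}K$, so pulling $\frac{d}{dt}$ through the integral at stages $j=0,\dots,n-2$ and establishing the existence and continuity of the limits $\lim_{s\to t-0}\phi_j(t,s)$ requires either additional regularity of $K$ or a more careful reading of (a), (b), (d) than the statement makes explicit. You have correctly identified this as the locus of the real work, so I would only ask that a final write-up state precisely which domination is used at each of the $n$ differentiations rather than deferring it to a single remark.
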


In the special cases, Riemann-Riouville type differentiation under integral sign holds for convolution operator \cite{Leibniz}:

\begin{itemize}
	\item If $\alpha \in (0,1]$, then 
	\begin{align*}
	\prescript{RL}{}{D^{\alpha}_{0+}}\int\limits_{0}^{t}f(t-s)g(s)\mathrm{d}s&=\lim\limits_{s\to t-0} \prescript{RL,t}{}{D^{\alpha-1}_{s+}}f(t-s)\lim\limits_{s\to t-0}g(s)\nonumber\\&+\int\limits_{0}^{t}\prescript{RL,t}{}{D^{\alpha}_{s+}}f(t-s)g(s)\mathrm{d}s, \quad  t >0 ;
	\end{align*}
	\item  If $\alpha \in (1,2]$, then 
	\begin{align*}
	\prescript{RL}{}{D^{\alpha}_{0+}}\int\limits_{0}^{t}f(t-s)g(s)\mathrm{d}s&=\lim\limits_{s\to t-0} \prescript{RL,t}{}{D^{\alpha-1}_{s+}}f(t-s)\lim\limits_{s\to t-0}g(s)\nonumber\\&+\lim\limits_{s\to t-0} \prescript{RL,t}{}{D^{\alpha-2}_{s+}}f(t-s)\lim\limits_{s\to t-0}g(s)+\int\limits_{0}^{t}\prescript{RL,t}{}{D^{\alpha}_{s+}}f(t-s)g(s)\mathrm{d}s, \quad  t >0.
	\end{align*}
\end{itemize}

\begin{definition}\label{CD}\cite{Kilbas,Podlubny} 
  	The Caputo fractional derivative  of  order, $n-1<\alpha\leq n$, $n \in \mathbb{N}$ for a function  $\ g\in  \left( [0,\infty), \mathbb{R}\right) $  is defined by
  	\begin{equation}
  	(\prescript{C}{}D^{\alpha}_{0^{+}}g)(t)=\frac{1}{\Gamma(n-\alpha)}\int\limits_0^t(t-s)^{n-\alpha-1}\left(\frac{d}{ds}\right)^{n}g(s)\,\mathrm{d}s \\, \quad t>0,
  	\end{equation}
  	where the function $g(\cdot)$ has absolutely continuous derivatives up to order $n$.
  \end{definition}

\begin{definition}\label{RLC}\cite{Kilbas,Podlubny} 
	The relationship between Caputo and Riemann-Liouville fractional differential operators of  order $n-1<\alpha\leq n$, $n \in \mathbb{N}$ for a function  $\ g\in  \left( [0,\infty), \mathbb{R}\right) $ is defined by
	\begin{equation}\label{relation}
	(\prescript{C}{}D^{\alpha}_{0^{+}}g)(t)=	\prescript{RL}{}D^{\alpha}_{0^{+}}\left( g(t)-\sum_{k=0}^{n-1}\frac{t^{k}}{k!}g^{(k)}(0)\right)  \\, \quad t>0,
	\end{equation}
	where the function $g(\cdot)$ has absolutely continuous derivatives up to order $n$.
\end{definition}
\begin{remark}
	If $g(\cdot)$ is an abstract function with values in $X$, then the integrals which appear in Definition \ref{RLI},  \ref{RLD}, \ref{CD} and \ref{RLC} are taken in Bochner's sense.
\end{remark}

  The Laplace transform of the Caputo's fractional differentiation operator \cite{Kilbas} is defined by
  \begin{equation} \label{lapCaputo}
  \mathscr{L} \left\lbrace (\prescript{C}{}D^{\alpha}_{0^{+}}g)(t)\right\rbrace(s) =s^{\alpha}G(s)-\sum_{k=1}^{n}s^{\alpha-k}g^{(k-1)}(0), \quad n-1<\alpha\leq n, \quad n\in\mathbb{N},
  \end{equation}
  where $G(s)=\mathscr{L} \left\lbrace g(t) \right\rbrace(s)$. 
 
  In the particular cases, the Laplace integral transform of the Caputo fractional derivative is:
 	
 	\begin{itemize}\label{remLap}
 		\item If $\alpha \in (0,1]$, then 
 		\begin{equation*}
 		\mathscr{L}\left\lbrace  \left( \prescript{C}{}D^{\alpha}_{0^{+}}x\right) (t)\right\rbrace (s)=s^{\alpha}X(s)-s^{\alpha-1}x(0) ;
 		\end{equation*}
 		\item  If $\alpha \in (1,2]$, then 
 		\begin{equation*}
 		\mathscr{L}\left\lbrace  \left( \prescript{C}{}D^{\alpha}_{0^{+}}x\right) (t)\right\rbrace (s)=s^{\alpha}X(s)-s^{\alpha-1}x(0)-s^{\alpha-2}x^{\prime}(0) ,
 		\end{equation*}
 	\end{itemize} 
  
  where $X(s)=\mathscr{L} \left\lbrace x(t) \right\rbrace(s)$.

  \begin{lemma} [\label{sumA}\cite{Yosida}]
  	Suppose that A is linear bounded operator defined on the Banach space $X$ and assume that $\|A\| < 1$. Then, $(I-A)^{-1}$ is linear bounded on $X$ and
  	\begin{equation}\label{operator}
  	(I-A)^{-1}=\sum_{k=0}^{\infty}A^{k}.
  	\end{equation}
  \end{lemma}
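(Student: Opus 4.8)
The plan is to establish the claim by the classical Neumann series argument, which rests on two structural facts: that the space $\mathscr{B}(X)$ of bounded linear operators on $X$ is itself a Banach space under the operator norm, and that this norm is submultiplicative, i.e. $\norm{ST}\le\norm{S}\,\norm{T}$.

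First I would show that the partial sums $S_{n}=\sum_{k=0}^{n}A^{k}$ form a Cauchy sequence in $\mathscr{B}(X)$. Submultiplicativity gives $\norm{A^{k}}\le\norm{A}^{k}$, so for $m>n$,
\begin{equation*}
\norm{S_{m}-S_{n}}\le\sum_{k=n+1}^{m}\norm{A}^{k}\le\frac{\norm{A}^{n+1}}{1-\norm{A}},
\end{equation*}
which tends to $0$ as $n\to\infty$ precisely because $\norm{A}<1$. By completeness of $\mathscr{B}(X)$ the series therefore converges in operator norm to a bounded operator $S=\sum_{k=0}^{\infty}A^{k}$, and letting $m\to\infty$ above (with $n=0$) yields the a priori bound $\norm{S}\le(1-\norm{A})^{-1}$.

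Next I would verify that this limit $S$ is in fact the two-sided inverse of $I-A$. The essential observation is the telescoping identity for the finite sums,
\begin{equation*}
(I-A)S_{n}=S_{n}(I-A)=I-A^{n+1}.
\end{equation*}
Since $\norm{A^{n+1}}\le\norm{A}^{n+1}\to 0$, passing to the limit $n\to\infty$ and using continuity of operator composition gives $(I-A)S=S(I-A)=I$. Hence $I-A$ is invertible with bounded inverse $S$, which is exactly the representation \eqref{operator}.

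The argument presents no genuine obstacle; the only point deserving care is the justification that the limit may be moved inside the products $(I-A)S_{n}$ and $S_{n}(I-A)$, which follows from the fact that composition is a (Lipschitz) continuous bilinear map on $\mathscr{B}(X)$, together with the norm convergence $S_{n}\to S$ already established.
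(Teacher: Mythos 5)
Your proof is correct and is precisely the classical Neumann series argument; the paper itself offers no proof of this lemma, simply citing Yosida's \emph{Functional Analysis}, where the same argument (geometric bound via submultiplicativity, completeness of $\mathscr{B}(X)$, and the telescoping identity $(I-A)S_{n}=I-A^{n+1}$) is the standard one. Nothing is missing, and your remark about the continuity of composition justifying the passage to the limit is the right point to flag.
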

  
  The following well-known generalized Gronwall inequality which plays an important role in the qualitative analysis of the solutions to fractional differential equations is stated and proved in \cite{henry,ye} for $\beta>0$. In particular case, if $\beta=1$, then the following relations hold true:
  	\begin{theorem}\label{thm-1} 
  	Suppose $a(t)$ is a nonnegative function locally integrable on $0 \leq t < T$ (some $T\leq +\infty$), $b(t)$ is a nonnegative, nondecreasing continuous function defined on $0\leq t<T$, $|b(t)|\leq M$, ($M$ is a positive constant) and suppose $u(t)$ is a nonnegative and locally integrable on  $0 \leq t < T$ with
  	\begin{equation*}
  	u(t)\leq a(t)+ b(t)\int\limits_{0}^{t}u(s)ds,
  	\end{equation*}
  	on this interval; then
  	\begin{equation*}
  	u(t)\leq a(t)+ b(t)\int\limits_{0}^{t}\exp\left( b(t)(t-s)\right) a(s)ds, \quad 0 \leq t < T.
  	\end{equation*}
  \end{theorem}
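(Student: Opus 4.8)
The plan is to run the classical Picard-type iteration of the integral inequality, exploiting that the multiplier $b(t)$ is nonnegative and nondecreasing. First I would introduce the monotone integral operator $(\mathcal{T}\phi)(t) := b(t)\int_{0}^{t}\phi(s)\,\mathrm{d}s$, so that the hypothesis reads $u \le a + \mathcal{T}u$ pointwise on $[0,T)$. Since $b(t)\ge 0$, the operator $\mathcal{T}$ preserves the pointwise order, and substituting the inequality into itself $n$ times yields
\[
u(t) \le \sum_{k=0}^{n-1}(\mathcal{T}^{k} a)(t) + (\mathcal{T}^{n} u)(t), \qquad t\in[0,T).
\]

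The core computation is an inductive estimate for the iterated kernel. I would prove by induction on $k\ge 1$ that
\[
(\mathcal{T}^{k} a)(t) \le b(t)^{k} \int_{0}^{t} \frac{(t-s)^{k-1}}{(k-1)!}\,a(s)\,\mathrm{d}s .
\]
The base case $k=1$ is the definition of $\mathcal{T}$. For the inductive step, I substitute the induction hypothesis for $\mathcal{T}^{k}a$ and then use the monotonicity of $b$, namely $b(r)\le b(t)$ for $r\le t$, to pull the factor $b(r)^{k}$ out as $b(t)^{k}$; a change in the order of integration (legitimate by Tonelli, since every integrand is nonnegative) collapses the double integral via $\int_{s}^{t}(r-s)^{k-1}\,\mathrm{d}r = (t-s)^{k}/k$, producing the $(k+1)$-st estimate. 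The monotonicity of $b$ is exactly what makes this clean, and it is the one structural hypothesis the argument genuinely needs beyond nonnegativity.

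Next I would dispose of the remainder. The same induction applied with $u$ in place of $a$, together with the uniform bound $b(t)\le M$ and $(t-s)^{n-1}\le t^{n-1}$, gives $(\mathcal{T}^{n} u)(t) \le M\,\dfrac{(Mt)^{n-1}}{(n-1)!}\int_{0}^{t} u(s)\,\mathrm{d}s$. Because $u$ is locally integrable the last integral is finite on each $[0,t]\subset[0,T)$, while $(Mt)^{n-1}/(n-1)!\to 0$, so $(\mathcal{T}^{n} u)(t)\to 0$ as $n\to\infty$. Letting $n\to\infty$ in the iterated inequality therefore leaves $u(t)\le a(t) + \sum_{k=1}^{\infty} b(t)^{k} \int_{0}^{t} \tfrac{(t-s)^{k-1}}{(k-1)!}a(s)\,\mathrm{d}s$.

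Finally, interchanging the sum and the integral (again justified by nonnegativity, via monotone convergence) and recognizing the resulting power series as an exponential,
\[
\sum_{k=1}^{\infty} b(t)^{k} \frac{(t-s)^{k-1}}{(k-1)!} = b(t)\sum_{j=0}^{\infty}\frac{\bigl(b(t)(t-s)\bigr)^{j}}{j!} = b(t)\exp\!\bigl(b(t)(t-s)\bigr),
\]
yields exactly the asserted bound $u(t)\le a(t) + b(t)\int_{0}^{t} \exp\bigl(b(t)(t-s)\bigr)a(s)\,\mathrm{d}s$. I expect the main obstacle to be the bookkeeping in the inductive kernel estimate and, more delicately, rigorously justifying that the remainder vanishes and that the limit and sum/integral interchanges are valid given only local integrability of $a$ and $u$; the nonnegativity of every term is what ultimately licenses all of these exchanges.
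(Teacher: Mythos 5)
Your proof is correct and is essentially the same argument the paper relies on: the paper does not prove Theorem 2.2 itself but cites Henry and Ye--Gao--Ding, whose proof is exactly this iteration of the monotone operator $\mathcal{T}\phi(t)=b(t)\int_0^t\phi(s)\,\mathrm{d}s$, the inductive kernel bound, the vanishing remainder via $|b|\le M$, and the summation of the resulting series (there yielding a Mittag--Leffler function for general $\beta>0$, which reduces to your exponential when $\beta=1$). No gaps; all the interchanges you flag are indeed licensed by nonnegativity.
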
 
  \begin{corollary}
  	Under the hypothesis of Theorem \ref{thm-1}, let $a(t)$ be a nondecreasing function on $[0,T)$.
  	Then
  	\begin{equation}\label{coroll}
  	u(t)\leq a(t) \exp\left( b(t)t\right) , \quad 0 \leq t < T.
  	\end{equation}
  \end{corollary}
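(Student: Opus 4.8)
The plan is to derive this corollary directly from the conclusion of Theorem \ref{thm-1}, since all the hypotheses of that theorem are assumed and we are merely adding the monotonicity of $a$. So I would begin by writing down the bound already furnished by Theorem \ref{thm-1}, namely
\begin{equation*}
u(t)\leq a(t)+ b(t)\int\limits_{0}^{t}\exp\left( b(t)(t-s)\right) a(s)\,\mathrm{d}s, \quad 0 \leq t < T,
\end{equation*}
and then simplify the right-hand side using the extra assumption.

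The key step is to exploit that $a$ is nondecreasing: for every $s\in[0,t]$ we have $a(s)\leq a(t)$, so $a(s)$ can be replaced by the constant (in $s$) value $a(t)$ and pulled outside the integral. This yields
\begin{equation*}
u(t)\leq a(t)+ b(t)\,a(t)\int\limits_{0}^{t}\exp\left( b(t)(t-s)\right)\mathrm{d}s .
\end{equation*}
Here I would note that $b(t)$ is held fixed throughout the $s$-integration, so the integrand is a genuine exponential in $s$.

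Next I would evaluate the elementary integral by the substitution $\tau = t-s$, giving $\int_{0}^{t}\exp\left(b(t)(t-s)\right)\mathrm{d}s = \int_{0}^{t}\exp\left(b(t)\tau\right)\mathrm{d}\tau = \tfrac{1}{b(t)}\left(\exp(b(t)t)-1\right)$, valid when $b(t)>0$. Substituting back, the factor $b(t)$ cancels and the terms collapse:
\begin{equation*}
u(t)\leq a(t)+ a(t)\left(\exp(b(t)t)-1\right) = a(t)\exp\left(b(t)t\right),
\end{equation*}
which is exactly \eqref{coroll}.

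There is essentially no serious obstacle here; the argument is a short computation once Theorem \ref{thm-1} is invoked. The only point requiring a word of care is the degenerate case $b(t)=0$, where the division above is not licit. In that case, however, the original bound reads $u(t)\leq a(t)$, which coincides with $a(t)\exp(b(t)t)=a(t)\exp(0)=a(t)$, so the claimed inequality holds trivially. One should therefore either dispatch this case separately at the outset or observe that the final formula extends continuously as $b(t)\to 0^{+}$.
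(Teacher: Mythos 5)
Your argument is correct: starting from the conclusion of Theorem \ref{thm-1}, using monotonicity to replace $a(s)$ by $a(t)$ under the (nonnegative) integrand, evaluating $\int_{0}^{t}\exp\left(b(t)(t-s)\right)\mathrm{d}s=\tfrac{1}{b(t)}\left(\exp(b(t)t)-1\right)$, and collapsing the terms gives exactly \eqref{coroll}, and you correctly dispose of the degenerate case $b(t)=0$. The paper itself states this corollary without proof, citing the Gronwall-inequality literature; your computation is precisely the standard argument given there (specialized to $\beta=1$), so there is nothing to fault and no divergence of method to report.
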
 
 
  The Mittag-Leffler function is a natural generalization of the exponential function, first proposed as a single parameter function of one variable by using an infinite series \cite{ML}. Extensions to two or three parameters are well known and thoroughly studied in textbooks such as \cite{Gorenflo}. Extensions to two or several variables have been studied more recently \cite{ ismail-arzu,A-H-F-M,fernandez-kurt-ozarslan,saxena-kalla-saxena}.
  \begin{definition}[\cite{ML}] \label{Def:ML}
  	The classical Mittag-Leffler function is defined by
  	\begin{equation}\label{ML1}
  	E_{\alpha}(t)= \sum_{k=0}^{\infty}\frac{t^{k}}{\Gamma(k \alpha +1)}, \quad  \alpha>0, \quad t\in\mathbb{R}.
  	\end{equation}

  	The two-parameter Mttag-Leffler function  \cite{Wiman} is given by
  	\begin{equation}\label{ML-2}
  	E_{\alpha,\beta}(t)= \sum_{k=0}^{\infty}\frac{t^{k}}{\Gamma(k \alpha +\beta)}, \quad  \alpha>0, \quad \beta\in\mathbb{R}, \quad t\in\mathbb{R}.
  	\end{equation}
  	The three-parameter Mittag-Leffler function \cite{Prabhakar} is determined by
  	\begin{equation}
  	E_{\alpha,\beta}^{\gamma}(t)= \sum_{k=0}^{\infty}\frac{(\gamma)_k}{\Gamma(k \alpha +\beta)}\frac{t^{k}}{k!}, \quad  \alpha >0, \quad \beta,\gamma\in\mathbb{R}, \quad  t\in\mathbb{R},
  	\end{equation}
  	where $(\gamma)_k$ is the Pochhammer symbol denoting $\frac{\Gamma(\gamma+k)}{\Gamma(\gamma)}$. These series are convergent, locally uniformly in $t$, provided the $\alpha>0$ condition is satisfied. It is important to note that
  	\[
  	E_{\alpha,\beta}^1(t)=E_{\alpha,\beta}(t),\quad E_{\alpha,1}(t)=E_{\alpha}(t),\quad E_1(t)=\exp(t).
  	\]
  \end{definition}

   	\begin{lemma}[\cite{Prabhakar}] \label{Lem:PrabhLap}
   		The Laplace transform of the three-parameter Mittag-Leffler function is given by
   		\begin{equation}\label{ad}
   		\mathscr{L}\left\{t^{\beta-1}E_{\alpha,\beta}^{\gamma}(\lambda t^{\alpha})\right\}(s)=s^{-\beta}\left(1-\lambda s^{-\alpha}\right)^{-\gamma},
   		\end{equation}
   		where $\alpha>0$,\quad$\beta,\gamma,\lambda \in \mathbb{R}$ and $Re(s)>0$.
   	\end{lemma}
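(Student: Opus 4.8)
The plan is to prove the identity by the most direct route available: expand the three-parameter Mittag-Leffler function as its defining power series, apply the Laplace transform term by term, cancel the gamma factors, and recognize the surviving series as a binomial expansion. First I would insert the argument $\lambda t^{\alpha}$ into the series from Definition \ref{Def:ML} and pull the factor $t^{\beta-1}$ inside, obtaining
\begin{equation*}
t^{\beta-1}E_{\alpha,\beta}^{\gamma}(\lambda t^{\alpha})
=\sum_{k=0}^{\infty}\frac{(\gamma)_k\,\lambda^{k}}{\Gamma(\alpha k+\beta)\,k!}\,t^{\alpha k+\beta-1}.
\end{equation*}

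The key computational input is the elementary transform $\mathscr{L}\{t^{\mu-1}\}(s)=\Gamma(\mu)\,s^{-\mu}$, valid for $\operatorname{Re}(\mu)>0$, which I would apply with $\mu=\alpha k+\beta$ to each monomial $t^{\alpha k+\beta-1}$. Granting for the moment that the Laplace integral may be exchanged with the summation, the factor $\Gamma(\alpha k+\beta)$ produced by the transform cancels exactly against the one in the denominator of the series, leaving
\begin{equation*}
\mathscr{L}\left\{t^{\beta-1}E_{\alpha,\beta}^{\gamma}(\lambda t^{\alpha})\right\}(s)
=s^{-\beta}\sum_{k=0}^{\infty}\frac{(\gamma)_k}{k!}\bigl(\lambda s^{-\alpha}\bigr)^{k}.
\end{equation*}
The final step is to recognize this as the generalized binomial series: since $(\gamma)_k/k!$ are precisely the coefficients in $(1-z)^{-\gamma}=\sum_{k=0}^{\infty}\frac{(\gamma)_k}{k!}z^{k}$ for $|z|<1$, setting $z=\lambda s^{-\alpha}$ yields $s^{-\beta}(1-\lambda s^{-\alpha})^{-\gamma}$, which is the claimed formula. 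This identification also pins down the natural region of validity, namely $|\lambda s^{-\alpha}|<1$, i.e. $\operatorname{Re}(s)$ large enough that $|s|^{\alpha}>|\lambda|$.

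The step I expect to require the most care is the interchange of summation and integration used in passing to the second display. The cleanest justification is to invoke the local uniform convergence of the Mittag-Leffler series noted after Definition \ref{Def:ML}, together with the exponential decay $e^{-\operatorname{Re}(s)t}$ of the Laplace kernel, and then apply a dominated-convergence or Fubini-Tonelli argument to the absolutely convergent double integral; restricting to $\operatorname{Re}(s)>|\lambda|^{1/\alpha}$ guarantees the dominating series $\sum_{k}\frac{|(\gamma)_k|}{k!}|\lambda s^{-\alpha}|^{k}$ converges, which secures both the termwise integration and the binomial summation simultaneously. Once this estimate is in place the remaining manipulations are purely algebraic, so the entire argument reduces to the convergence bookkeeping plus the two standard series identities.
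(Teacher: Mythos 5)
Your argument is correct and is the standard proof of Prabhakar's formula; the paper itself gives no proof of this lemma, merely citing \cite{Prabhakar}, and your term-by-term transform with the gamma cancellation and the binomial series $(1-z)^{-\gamma}=\sum_{k\ge 0}\frac{(\gamma)_k}{k!}z^k$ is exactly the canonical route. Two minor points: your use of $\mathscr{L}\{t^{\mu-1}\}(s)=\Gamma(\mu)s^{-\mu}$ at $k=0$ tacitly requires $\beta>0$ (the lemma's stated hypothesis $\beta\in\mathbb{R}$ is too loose, since $t^{\beta-1}$ is not locally integrable at $0$ otherwise), and the extension from your region $|s|^{\alpha}>|\lambda|$ to all of $\operatorname{Re}(s)>0$ should be stated explicitly as an analytic continuation of both sides, which is the same device the paper invokes later in the proof of its Theorem 4.1.
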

   	\begin{definition}\cite{fernandez-kurt-ozarslan}
   	A bivariate Mittag-Leffler type function which is a particular case of multivariate Mittag-Leffler function \cite{Luchko and Gorenflo} is defined by
   \begin{equation}\label{bivtype}
   	E_{\alpha,\beta,\gamma}(\lambda_{1}t^{\alpha},\lambda_{2}t^{\beta})=\sum_{k=0}^{\infty}\sum_{m=0}^{\infty}\binom{k+m}{m}\frac{\lambda_{1}^k\lambda_{2}^mt^{k\alpha+m\beta}}{\Gamma(k\alpha+m\beta+\gamma)}, \quad  \alpha,\beta>0, \quad  \gamma\in \mathbb{R}.
   \end{equation}
   	\end{definition}

	\section{A representation of mild solution to \eqref{mtde} with non-permutable linear operators}\label{sec:3}
In this section, we consider the Cauchy problem for fractional evolution equation of Sobolev type in a Banach spaces.
Firstly, we introduce the following hypotheses on the linear operators  $A_{0}$, $B_{0}$ and $E$:

$(H_{1})$: $A_{0}$ is a closed operator;

$(H_{2})$: $B_{0}$ is a bounded operator;

$(H_{3})$: $D(E)\subset D(A_{0})$ and $E$ is bijective;

$(H_{4})$: A linear operator $E^{-1}: Y \to D(E)\subset X$ is compact.
	
It is important to stress out that $(H_{4})$ implies  $E^{-1}$ is bounded. Furthermore, $(H_{4})$ also implies that $E$ is closed since the fact:
$E^{-1}$ is closed and injective, then its inverse is also closed. It comes from the closed graph theorem, we acquire the boundedness of the linear operator $ A\coloneqq A_{0}E^{-1}: Y \to Y$. Furthermore, $ B\coloneqq   B_{0}E^{-1}: Y \to Y$ is a linear bounded operator since $E^{-1}$ and $B_{0}$ are bounded.

 Obviously, the substitution $Ey(t)=x(t)$ is equivalent to  $y(t)=E^{-1}x(t)$. The central idea is that applying the substitution $y(t)=E^{-1}x(t)$, under the hypotheses $(H_{1})-(H_{4})$, we transform the Sobolev type fractional-order evolution system \eqref{mtde} to the  following multi-term evolution system with linear bounded operators $A,B\in\mathscr{B}(Y)$:
 
\begin{equation}\label{mtde-1}
\begin{cases}
\left(\prescript{C}{}{D^{\alpha}_{0+}}x\right) (t) -A\left( \prescript{C}{}{D^{\beta}_{0+}}x\right) (t)=Bx(t)+g(t), \quad t>0,\\
x(0)=\eta , \quad x^{\prime}(0)=\tilde{\eta},
\end{cases}
\end{equation}
where $x(\cdot):\mathbb{J}\to Y$ and $\eta,\tilde{\eta}\in Y$.

This signifies that a mild solution of the Cauchy problem for Sobolev type multi-term fractional evolution equation \eqref{mtde} is the multiplication of $E^{-1}\in \mathscr{B}(Y)$ and  the solution of an initial value problem for fractional evolution equation with multi-orders and linear bounded operators \eqref{mtde-1}.
\begin{remark}
Alternatively, we can modify the assumptions which are given  above in a similar way:

$(H'_1)$: $A_{0}$ is a bounded operator;

$(H'_2)$: $B_{0}$ is a closed operator;

$(H'_3)$: $D(E)\subset D(B_{0})$ and $E$ is bijective;

$(H'_4)$: $E^{-1}: Y\to D(E)\subset X$ is compact.

It follows from the closed graph theorem $B\coloneqq B_{0}E^{-1}:Y \to Y$ is a linear bounded operator. Furthermore, $A\coloneqq A_{0}E^{-1}:Y\to Y$ is also a linear bounded operator since $A_{0}$ and $E^{-1}$ are bounded. In conclusion, under the assumptions $(H'_1)-(H'_4)$, the Sobolev type fractional multi-term evolution equation with initial conditions \eqref{mtde} is converted to the  fractional evolution system with linear bounded operators \eqref{mtde-1} by using the same transformation $y(t)=E^{-1}x(t)$.
\end{remark}

To get an analytical representation of the mild solution of \eqref{mtde-1}, first, we need to show that exponentially boundedness of $x(\cdot)$ and its Caputo derivatives $\left( \prescript{C}{}{D^{\alpha}_{0+}}x\right)(\cdot)$ ,$\left( \prescript{C}{}{D^{\beta}_{0+}}x\right)(\cdot)$ for $1<\alpha\leq2$ and $0<\beta\leq 1$, respectively.  To do this, we need to assume exponential boundedness for one of the given fractional differentiation operators and a forced term with the aid of following Theorem \ref{thm2}. 

	\begin{theorem} \label{thm2}
		Assume \eqref{mtde-1} has a unique continuous solution $x(t)$, if $g(t)$ is continuous  \& exponentially bounded  and $\left( \prescript{C}{}D^{\beta}_{0^{+}}x\right) (t)$ for $0<\beta\leq1$ is exponentially bounded on $\left[ 0, \infty\right)$, then $x(t)$ and its Caputo derivative $\left( \prescript{C}{}D^{\alpha}_{0^{+}}x\right ) (t)$ is exponentially bounded for $1<\alpha\leq2$ on $\left[ 0, \infty\right)$ and, thus, their Laplace transforms exist.
	\end{theorem}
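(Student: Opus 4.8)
The plan is to reconstruct $x$ from the lower-order Caputo derivative and then read the higher-order derivative directly off the equation, propagating the exponential growth estimate at each stage. First I would invoke the standard left-inverse relation between the Riemann--Liouville integral of Definition \ref{RLI} and the Caputo derivative of Definition \ref{CD}: for $0<\beta\le 1$ and an absolutely continuous $x$ one has $\left(I^{\beta}_{0+}\prescript{C}{}{D^{\beta}_{0+}}x\right)(t)=x(t)-x(0)$, which follows from the relation \eqref{relation} because $I^{\beta}_{0+}$ inverts $\prescript{RL}{}{D^{\beta}_{0+}}$ on functions vanishing at the origin. Since the hypotheses guarantee that $x$ is a continuous solution whose Caputo derivatives exist, this identity applies and yields
\[
x(t)=\eta+\frac{1}{\Gamma(\beta)}\int_{0}^{t}(t-s)^{\beta-1}\left(\prescript{C}{}{D^{\beta}_{0+}}x\right)(s)\,\mathrm{d}s .
\]

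Next I would insert the assumed bound $\left\|\left(\prescript{C}{}{D^{\beta}_{0+}}x\right)(t)\right\|\le M_{\beta}e^{\omega t}$, which we may take with $\omega>0$ since a bound with a smaller rate implies one with any larger rate, and estimate the convolution using the Bochner norm inequality. After the substitution $u=t-s$ the integral becomes $e^{\omega t}\int_{0}^{t}u^{\beta-1}e^{-\omega u}\,\mathrm{d}u$, and extending the integration to $[0,\infty)$ bounds it by $e^{\omega t}\,\Gamma(\beta)\,\omega^{-\beta}$. Hence $\|x(t)\|\le\bigl(\|\eta\|+M_{\beta}\omega^{-\beta}\bigr)e^{\omega t}$, so $x$ itself is exponentially bounded.

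With $x$ under control, I would return to \eqref{mtde-1} rewritten as $\left(\prescript{C}{}{D^{\alpha}_{0+}}x\right)(t)=A\left(\prescript{C}{}{D^{\beta}_{0+}}x\right)(t)+Bx(t)+g(t)$. Because $A,B\in\mathscr{B}(Y)$ are bounded and each of $\left(\prescript{C}{}{D^{\beta}_{0+}}x\right)$, $x$ and $g$ is exponentially bounded, the triangle inequality immediately gives an exponential bound for $\left(\prescript{C}{}{D^{\alpha}_{0+}}x\right)$ with growth rate the maximum of the individual rates. Finally, exponential boundedness together with continuity (hence local Bochner integrability) makes each Laplace integral absolutely convergent for $\operatorname{Re}(s)$ sufficiently large, which establishes existence of all the stated transforms.

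The only genuinely analytic step is the convolution estimate, and the single point that needs care there is the degenerate rate $\omega=0$: in that case the same computation gives the polynomial bound $M_{\beta}t^{\beta}/\Gamma(\beta+1)$, which is still dominated by any positive exponential, so the conclusion is unaffected. Everything else is bounded-operator bookkeeping, and the reconstruction identity is valid precisely because the solution $x$ is assumed continuous with existing Caputo derivatives.
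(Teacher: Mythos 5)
Your proof is correct, but it follows a genuinely different route from the paper's. The paper rewrites \eqref{mtde-1} as a Volterra integral equation of the second kind in $x$ itself (involving the kernels $(t-r)^{\alpha-\beta-1}$ and $(t-r)^{\alpha-1}$), splits the integrals at a large time $T$, multiplies through by $e^{-\delta t}$, and invokes the generalized Gronwall inequality \eqref{coroll} to bound $\|x(t)\|e^{-\delta t}$; the hypothesis on $\left(\prescript{C}{}{D^{\beta}_{0+}}x\right)(t)$ enters only at the very end, when $\left(\prescript{C}{}{D^{\alpha}_{0+}}x\right)(t)$ is read off the equation. You instead use the hypothesis at the outset: the left-inverse identity $x(t)=\eta+\bigl(I^{\beta}_{0+}\prescript{C}{}{D^{\beta}_{0+}}x\bigr)(t)$ reconstructs $x$ from its lower-order Caputo derivative, and a single convolution estimate $\int_{0}^{t}u^{\beta-1}e^{-\omega u}\,\mathrm{d}u\le\Gamma(\beta)\omega^{-\beta}$ gives the exponential bound on $x$ directly, with no Gronwall argument and no splitting at $T$. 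The final step (triangle inequality on the equation, then absolute convergence of the Laplace integrals) coincides with the paper's.

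Your route buys something real here. The Gronwall factor in the paper's proof is $\exp\bigl(tb(t)\bigr)$ with $b(t)=\frac{\|A\|t^{\alpha-\beta-1}}{\Gamma(\alpha-\beta)}+\frac{\|B\|t^{\alpha-1}}{\Gamma(\alpha)}$, so $tb(t)$ grows like $t^{\alpha}$ with $\alpha>1$; such a bound is super-exponential and does not by itself yield exponential boundedness of $x$ (the Gronwall scheme works cleanly only for orders at most $1$, as in the scalar references the paper cites). Your argument sidesteps this entirely because the rate $\omega$ of the hypothesis is inherited unchanged by $x$, and it also avoids the paper's pointwise replacement of $(t-r)^{\alpha-\beta-1}$ by $t^{\alpha-\beta-1}$, which is only valid when $\alpha-\beta\ge 1$. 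The one point you should state explicitly is the justification of $\bigl(I^{\beta}_{0+}\prescript{C}{}{D^{\beta}_{0+}}x\bigr)(t)=x(t)-x(0)$ for $0<\beta\le 1$: it requires $x$ absolutely continuous, which is available since the solution is assumed to possess Caputo derivatives of order up to $\alpha\in(1,2]$, and it is standard (Kilbas et al.) rather than something derivable from \eqref{relation} alone. With that citation in place, your argument is complete.
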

	\begin{proof}
		Since $g(t)$ and $\left( \prescript{C}{}D^{\beta}_{0^{+}}x\right) (t)$ for $0<\beta\leq1$  is exponentially bounded, there exists positive constants $L,P,\delta$  and sufficient large $T$ such that $\|g(t)\| \leq L \exp(\delta t)$ and $ \|\left( \prescript{C}{}D^{\beta}_{0^{+}}x\right) (t)\|\leq P \exp(\delta t)$  for any $t \geq T$. 
		It is clear that the system \eqref{mtde-1} is equivalent to the following Volterra fractional integral equation of second-kind:
		\begin{align} \label{Volterra}
		x(t)&=\left( 1-\frac{At^{\alpha-\beta}}{\Gamma(\alpha-\beta+1)}\right)\eta+t\hat{\eta}+\frac{A}{\Gamma(\alpha-\beta)}\int\limits_{0}^{t}(t-r)^{\alpha-\beta-1}x(r)dr\nonumber\\&+ \frac{1}{\Gamma(\alpha)}\int\limits_{0}^{t}(t-r)^{\alpha-1}[Bx(r)+g(r)]dr.
		\end{align}
		This means that every solution of \eqref{Volterra} is also a solution of \eqref{mtde-1} and vice versa. For $t \geq T$, \eqref{Volterra} can be expressed as 
		\begin{align*}
		x(t)&=\left( 1-\frac{At^{\alpha-\beta}}{\Gamma(\alpha-\beta+1)}\right)\eta+t\hat{\eta}+\frac{A}{\Gamma(\alpha-\beta)}\int\limits_{0}^{T}(t-r)^{\alpha-\beta-1}x(r)dr\nonumber\\&+ \frac{1}{\Gamma(\alpha)}\int\limits_{0}^{T}(t-r)^{\alpha-1}[Bx(r)+g(r)]dr+\frac{A}{\Gamma(\alpha-\beta)}\int\limits_{T}^{t}(t-r)^{\alpha-\beta-1}x(r)dr\nonumber\\&+ \frac{1}{\Gamma(\alpha)}\int\limits_{T}^{t}(t-r)^{\alpha-1}[Bx(r)+g(r)]dr.
		\end{align*}
		In view of hypotheses of Theorem \ref{thm2}, the solution $x(t), (x(0)=\eta, \quad x'(0)=\hat{\eta})$ is unique and continuous on $\left[ 0, \infty\right)$, then $Ax(t)$ and $Bx(t)+g(t)$ are bounded on $[0, T]$, namely:
		\begin{equation*}
		\exists M >0 \quad \text{s.t.} \quad \|Ax(t)\| \leq M, \quad \forall t \in [0, T],
		\end{equation*}
		and
		\begin{equation*}
		\exists N >0 \quad \text{s.t.} \quad \|Bx(t)+g(t)\| \leq N, \quad \forall t \in [0, T].
		\end{equation*}
		We have  
		\begin{align*}
		\|x(t)\|&\leq\left(  1+\frac{\|A\|t^{\alpha-\beta}}{\Gamma(\alpha-\beta+1)}\right) \|\eta\|+t\|\hat{\eta}\|+\frac{M}{\Gamma(\alpha-\beta)}\int\limits_{0}^{T}(t-r)^{\alpha-\beta-1}dr\nonumber\\&+ \frac{N}{\Gamma(\alpha)}\int\limits_{0}^{T}(t-r)^{\alpha-1}dr+\frac{\|A\|}{\Gamma(\alpha-\beta)}\int\limits_{T}^{t}(t-r)^{\alpha-\beta-1}\|x(r)\|dr\nonumber\\&+ \frac{\|B\|}{\Gamma(\alpha)}\int\limits_{T}^{t}(t-r)^{\alpha-1}\|x(r)\|dr+\frac{1}{\Gamma(\alpha)}\int\limits_{T}^{t}(t-r)^{\alpha-1}\|g(r)\|dr.
		\end{align*}
		
	  Multiplying last inequality by $\exp(-\delta t)$ and note that 
		\begin{equation*}
	 \exp(-\delta t) \leq \exp(-\delta r), \quad r \in [T,t]  \quad  \text{and} \quad 	\exp(-\delta t) \leq \exp(-\delta T),\quad  \|g(t)\| \leq L\exp(\delta t), \quad  t \geq T.
		\end{equation*}
		
		\allowdisplaybreaks
		Using the aforementioned inequalities, we attain
		\begin{align*}
		&\|x(t)\|\exp(-\delta t)\leq \|\eta\| \exp(-\delta t)+\frac{\|A\|t^{\alpha-\beta}}{\Gamma(\alpha-\beta+1)}\|\eta\|\exp(-\delta t)\\&+t\|\eta\|\exp(-\delta t)+ \frac{M\exp(-\delta t)}{\Gamma(\alpha-\beta)}\int\limits_{0}^{T}(t-r)^{\alpha-\beta-1}dr\\
		&+\frac{N\exp(-\delta t)}{\Gamma(\alpha)}\int\limits_{0}^{T}(t-r)^{\alpha-1}dr+  \frac{\|A\|\exp(-\delta t)}{\Gamma(\alpha-\beta)}\int\limits_{T}^{t}(t-r)^{\alpha-\beta-1}\|x(r)\|dr\\ &+\frac{\|B\|\exp(-\delta t)}{\Gamma(\alpha)}\int\limits_{T}^{t}(t-r)^{\alpha-1}\|x(r)\|dr+\frac{\exp(-\delta t)}{\Gamma(\alpha)}\int\limits_{T}^{t}(t-r)^{\alpha-1}\|g(r)\|dr\\
		&\leq\|\eta\|\exp(-\delta T)+\frac{\|A\|t^{\alpha-\beta}}{\Gamma(\alpha-\beta+1)}\|\eta\|\exp(-\delta T)+t\|\eta\|\exp(-\delta T)\\& +\frac{M\exp(-\theta T)}{\Gamma(\alpha-\beta+1)}(t^{\alpha-\beta}-(t-T)^{\alpha-\beta})+\frac{N\exp(-\delta T)}{\Gamma(\alpha+1)}(t^{\alpha}-(t-T)^{\alpha})\\ &+\frac{\|A\|}{\Gamma(\alpha-\beta)}\int\limits_{T}^{t}(t-r)^{\alpha-\beta-1}\|x(r)\|\exp(-\delta r)dr\\
		&+\frac{\|B\|}{\Gamma(\alpha)}\int\limits_{T}^{t}(t-r)^{\alpha-1}\|x(r)\|\exp(-\delta r)dr\\&+\frac{L}{\Gamma(\alpha)}\int\limits_{T}^{t}(t-r)^{\alpha-1}\exp(\delta(r-t))dr\\
		&\leq\|\eta\| \exp(-\delta T)+\frac{\|A\|t^{\alpha-\beta}}{\Gamma(\alpha-\beta+1)}\|\eta\|\exp(-\delta T)\\&+t\|\eta\|\exp(-\delta T) +\frac{M\exp(-\delta T)}{\Gamma(\alpha-\beta+1)}T^{\alpha-\beta}+\frac{N\exp(-\delta T)}{\Gamma(\alpha+1)}T^{\alpha}\\&+\int\limits_{0}^{t}\left( \frac{\|A\|(t-r)^{\alpha-\beta-1}}{\Gamma(\alpha-\beta)}+\frac{\|B\|(t-r)^{\alpha-1}}{\Gamma(\alpha)}\right) \|x(r)\|\exp(-\delta r)dr\\
		&+\frac{L}{\Gamma(\alpha)}\int\limits_{0}^{t}(t-r)^{\alpha-1}\exp(-\delta(t-r))dr\\
		&\leq\|\eta\| \exp(-\delta T)+\frac{\|A\|t^{\alpha-\beta}}{\Gamma(\alpha-\beta+1)}\|\eta\|\exp(-\delta T)\\&+t\|\eta\|\exp(-\delta T) +\frac{M\exp(-\delta T)}{\Gamma(\alpha-\beta+1)}T^{\alpha-\beta}+\frac{N\exp(-\delta T)}{\Gamma(\alpha+1)}T^{\alpha}\\&+\left( \frac{\|A\|t^{\alpha-\beta-1}}{\Gamma(\alpha-\beta)}+\frac{\|B\|t^{\alpha-1}}{\Gamma(\alpha)}\right)\int\limits_{0}^{t} \|x(r)\|\exp(-\delta r)dr+\frac{L}{\delta^{\alpha}}, \quad t \geq T.
		\end{align*}
		Denote 
		\begin{equation*}
		\begin{cases*}
		a(t)=\frac{\|A\|t^{\alpha-\beta}}{\Gamma(\alpha-\beta+1)}\|\eta\|\exp(-\delta T)+t\|\eta\|\exp(-\delta T)+\|\eta\| \exp(-\delta T)\\ \hspace{+0.65cm}+\frac{M\exp(-\delta T)}{\Gamma(\alpha-\beta+1)}T^{\alpha-\beta}+\frac{N\exp(-\delta T)}{\Gamma(\alpha+1)}T^{\alpha}+\frac{L}{\delta^{\alpha}} ,\\
		b(t)= \frac{\|A\|t^{\alpha-\beta-1}}{\Gamma(\alpha-\beta)}+\frac{\|B\|t^{\alpha-1}}{\Gamma(\alpha)}, \\
		v(t)= \|x(t)\| \exp(-\delta t).
		\end{cases*}
		\end{equation*}
		Thus, we attain
		\begin{equation}
		v(t) \leq a(t)+b(t) \int\limits_{0}^{t}v(s)ds, \quad t \geq T.
		\end{equation}
		According to the Gronwall's inequality \eqref{coroll}, we have
		\begin{equation} \label{rt}
		v(t)\leq a(t) \exp(tb(t))\leq\exp(a(t)+tb(t)) . 
		\end{equation}
	
		Then, it yields from \eqref{rt} that 
		\begin{align*}
		\|x(t)\|\leq\exp(a(t)+tb(t)+\delta t), \quad t \geq T.
		\end{align*}
		Since $g(t)$ and $\left( \prescript{C}{}D^{\beta}_{0^{+}}x\right) (t)$ for $\beta\in(0,1]$ are exponentially bounded on $[0, \infty)$, from equation \eqref{mtde-1}, we acquire
		\begin{align*}
		\| \left( \prescript{C}{}D^{\alpha}_{0^{+}}x\right) (t)\| &\leq \|A\|\|\left( \prescript{C}{}D^{\beta}_{0^{+}}x\right) (t)\|+\|B\| \|x(t)\|+ \|g(t)\| \\
		&\leq\|A\| P \exp(\delta t) + \|B\|\exp(a(t)+tb(t)+\delta t) + L \exp(\delta t) \\
		&\leq \left(\|A\| P+ \|B\|+L\right)  \exp(a(t)+tb(t)+\delta t), \quad t \geq T.
		\end{align*}
		In other words, $\left(\prescript{C}{}D^{\alpha}_{0^{+}}x \right)(t)$ is also exponentially bounded, the Laplace integral transforms of $x(t)$ and its Caputo derivatives $\left(\prescript{C}{}D^{\alpha}_{0^{+}}x \right)(t)$, $\left(\prescript{C}{}D^{\beta}_{0^{+}}x \right)(t)$ exist for $ \alpha \in (1,2]$ and $\beta \in (0,1]$, respectively. The proof is complete.
	\end{proof}
	Alternatively, we can also use the following version of Theorem \ref{thm2}, for exponential boundedness of $x(\cdot)$ and its derivatives $\left( \prescript{C}{}D^{\alpha}_{0^{+}}x\right) (\cdot)$ ,$\left( \prescript{C}{}D^{\beta}_{0^{+}}x\right) (\cdot)$ of order $1<\alpha\leq2$ and $0<\beta\leq1$, respectively in Caputo's sense on $\left[ 0, \infty\right)$.
	\begin{theorem} \label{thm3}
		Assume \eqref{mtde-1} has a unique continuous solution $x(t)$, if $g(t)$ is continuous  \& exponentially bounded  and $\left( \prescript{C}{}D^{\alpha}_{0^{+}}x\right) (t)$ for $1<\alpha\leq2$ is exponentially bounded on $\left[ 0, \infty\right)$, then $x(t)$ and its Caputo derivative $\left( \prescript{C}{}D^{\beta}_{0^{+}}x\right ) (t)$ is exponentially bounded for $0<\beta\leq1$ on $\left[ 0, \infty\right)$ and, thus, their Laplace transforms exist.
	\end{theorem}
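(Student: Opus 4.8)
The plan is to exploit that in Theorem \ref{thm3} it is the \emph{highest}-order Caputo derivative $\prescript{C}{}{D^{\alpha}_{0+}}x$ that is assumed exponentially bounded; this lets one recover $x$ by a single fractional integration and so sidestep the Gronwall estimate needed in Theorem \ref{thm2}. The starting point is the fundamental identity for a Caputo derivative of order $\alpha\in(1,2]$: since $x(0)=\eta$ and $x'(0)=\tilde{\eta}$, the relation $I^{\alpha}_{0+}\prescript{C}{}{D^{\alpha}_{0+}}x=x-\sum_{k=0}^{1}\frac{t^{k}}{k!}x^{(k)}(0)$ gives
\begin{equation*}
x(t)=\eta+t\tilde{\eta}+\frac{1}{\Gamma(\alpha)}\int_{0}^{t}(t-r)^{\alpha-1}\bigl(\prescript{C}{}{D^{\alpha}_{0+}}x\bigr)(r)\,\mathrm{d}r .
\end{equation*}
This holds for any $x$ regular enough for $\prescript{C}{}{D^{\alpha}_{0+}}x$ to be defined and uses only the initial data, not the differential equation itself.

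The key analytic step is a convolution estimate. Suppose $\|(\prescript{C}{}{D^{\alpha}_{0+}}x)(t)\|\leq K\exp(\delta t)$ for $t\geq T$ with $\delta>0$, and let $C\coloneqq\sup_{t\in[0,T]}\|(\prescript{C}{}{D^{\alpha}_{0+}}x)(t)\|<\infty$ by continuity. Splitting the integral at $T$, using $\int_{0}^{T}(t-r)^{\gamma-1}\,\mathrm{d}r\leq t^{\gamma}/\gamma$ on $[0,T]$ and the substitution $u=t-r$ with $\int_{0}^{\infty}u^{\gamma-1}\exp(-\delta u)\,\mathrm{d}u=\Gamma(\gamma)\delta^{-\gamma}$ on $[T,t]$, I would obtain, for every $\gamma>0$,
\begin{equation*}
\Bigl\|\bigl(I^{\gamma}_{0+}\prescript{C}{}{D^{\alpha}_{0+}}x\bigr)(t)\Bigr\|\leq\frac{C}{\Gamma(\gamma+1)}t^{\gamma}+\frac{K}{\delta^{\gamma}}\exp(\delta t),\qquad t\geq T .
\end{equation*}
Taking $\gamma=\alpha$ in the identity above and absorbing the polynomial terms $\|\eta\|+t\|\tilde{\eta}\|+Ct^{\alpha}/\Gamma(\alpha+1)$ into an exponential shows that $x(t)$ is exponentially bounded on $[0,\infty)$.

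To control $\prescript{C}{}{D^{\beta}_{0+}}x$ for $\beta\in(0,1]$, I would apply $\prescript{C}{}{D^{\beta}_{0+}}$ termwise to the same identity. Here $\prescript{C}{}{D^{\beta}_{0+}}\eta=0$ and $\prescript{C}{}{D^{\beta}_{0+}}(t\tilde{\eta})=\tilde{\eta}\,t^{1-\beta}/\Gamma(2-\beta)$ (which is $\tilde{\eta}$ when $\beta=1$), both polynomially bounded. For the integral term, set $h\coloneqq I^{\alpha}_{0+}\prescript{C}{}{D^{\alpha}_{0+}}x$; then $h(0)=0$ and $\alpha>\beta$, so using the semigroup law $I^{\alpha}_{0+}=I^{\beta}_{0+}I^{\alpha-\beta}_{0+}$ and $\prescript{RL}{}{D^{\beta}_{0+}}I^{\beta}_{0+}=\mathrm{Id}$,
\begin{equation*}
\prescript{C}{}{D^{\beta}_{0+}}h=\prescript{RL}{}{D^{\beta}_{0+}}h=\prescript{RL}{}{D^{\beta}_{0+}}I^{\beta}_{0+}\bigl(I^{\alpha-\beta}_{0+}\prescript{C}{}{D^{\alpha}_{0+}}x\bigr)=I^{\alpha-\beta}_{0+}\prescript{C}{}{D^{\alpha}_{0+}}x .
\end{equation*}
Hence $(\prescript{C}{}{D^{\beta}_{0+}}x)(t)=\tilde{\eta}\,t^{1-\beta}/\Gamma(2-\beta)+(I^{\alpha-\beta}_{0+}\prescript{C}{}{D^{\alpha}_{0+}}x)(t)$, and the convolution estimate with $\gamma=\alpha-\beta>0$ makes the right-hand side exponentially bounded. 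Exponential boundedness of $x$, $\prescript{C}{}{D^{\alpha}_{0+}}x$ and $\prescript{C}{}{D^{\beta}_{0+}}x$ then guarantees the existence of all three Laplace transforms.

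The hard part will be the rigorous justification of the composition rule $\prescript{C}{}{D^{\beta}_{0+}}I^{\alpha}_{0+}=I^{\alpha-\beta}_{0+}$ in the present Banach-space, Bochner-integral setting: one must verify that $h(0)=0$ and that $\prescript{C}{}{D^{\alpha}_{0+}}x$ is locally Bochner integrable, so that the reductions $\prescript{C}{}{D^{\beta}_{0+}}h=\prescript{RL}{}{D^{\beta}_{0+}}h$ (valid because $h(0)=0$) and $\prescript{RL}{}{D^{\beta}_{0+}}I^{\beta}_{0+}=\mathrm{Id}$ are legitimate for $X$-valued functions; local integrability follows from the assumed continuity of the solution and its Caputo derivative. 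The endpoint cases $\alpha=2$ and $\beta=1$ need a separate but routine check, reducing to the classical identities $\frac{\mathrm{d}}{\mathrm{d}t}I^{\alpha}_{0+}=I^{\alpha-1}_{0+}$ and $I^{2}_{0+}x''=x-\eta-t\tilde{\eta}$.
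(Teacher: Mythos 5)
Your proof is correct, but it takes a genuinely different route from the paper's. The paper disposes of Theorem \ref{thm3} with the single sentence that the proof is ``similar to the proof of Theorem \ref{thm2}''; that earlier proof rewrites \eqref{mtde-1} as the Volterra equation \eqref{Volterra}, splits the integrals at $T$, and invokes the Gronwall-type inequality \eqref{coroll} to control $x$, after which the differential equation itself bounds the remaining derivative. You instead exploit the asymmetry between the two theorems: since the derivative assumed bounded in Theorem \ref{thm3} is the one of \emph{highest} order, $x$ is recovered from it by a single fractional integration, $x(t)=\eta+t\tilde{\eta}+\bigl(I^{\alpha}_{0+}\prescript{C}{}{D^{\alpha}_{0+}}x\bigr)(t)$, and the split convolution estimate alone yields the exponential bound --- no Gronwall argument, and in fact no use of the differential equation or of the hypothesis on $g$ at all, so you prove a slightly stronger statement. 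Your identity $\prescript{C}{}{D^{\beta}_{0+}}x=\tilde{\eta}\,t^{1-\beta}/\Gamma(2-\beta)+I^{\alpha-\beta}_{0+}\prescript{C}{}{D^{\alpha}_{0+}}x$ is also the right way to handle the lower-order derivative: a literal transcription of the Theorem \ref{thm2} argument would try to read $\prescript{C}{}{D^{\beta}_{0+}}x$ off from the equation, which fails because $A$ need not be invertible, so this step actually supplies a detail that the paper's ``similar, omitted'' proof glosses over. What your approach costs is the routine verification of $\prescript{C}{}{D^{\beta}_{0+}}I^{\alpha}_{0+}=I^{\alpha-\beta}_{0+}$ in the Bochner-integral setting, which you correctly flag and which follows from the semigroup property of $I^{\gamma}_{0+}$ together with $h(0)=0$; the estimates themselves, $\int_{0}^{T}(t-r)^{\gamma-1}\,\mathrm{d}r\leq t^{\gamma}/\gamma$ and $\int_{T}^{t}(t-r)^{\gamma-1}e^{\delta r}\,\mathrm{d}r\leq\Gamma(\gamma)\delta^{-\gamma}e^{\delta t}$, are exactly right.
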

\begin{proof}
This proof is similar to the proof of Theorem \ref{thm2}. So, we omit it here.
\end{proof}
\begin{definition}
We define a new Mittag-Leffler  function $ \mathscr{E}_{\alpha,\beta,\gamma}^{A,B}(\cdot) :\mathbb{R}\to Y$ generated by nonpermutable linear bounded operators $A,B\in\mathscr{B}(Y)$ as follows:
\begin{equation}
\mathscr{E}_{\alpha,\beta,\gamma}^{A,B}(t)\coloneqq\sum_{k=0}^{\infty}\sum_{m=0}^{\infty}Q_{k,m}^{A,B}\frac{t^{k\alpha+m\beta}}{\Gamma(k\alpha+m\beta+\gamma)}, \quad \alpha,\beta>0, \quad \gamma \in \mathbb{R},
\end{equation}
where $Q_{k,m}^{A,B}\in\mathscr{B}(Y)$, $k,m\in\mathbb{N}_{0}$ is given by

\begin{equation}\label{important}
Q_{k,m}^{A,B}\coloneqq\sum_{l=0}^{k} A^{k-l}BQ_{l,m-1}^{A,B}, \quad  k,m\in\mathbb{N}, \qquad Q_{k,0}^{A,B}\coloneqq A^{k}, \quad k\in\mathbb{N}_{0}, \qquad Q_{0,m}^{A,B}\coloneqq B^{m}, \quad m\in\mathbb{N}_{0}.
\end{equation}
\end{definition}
A linear bounded operator $Q_{k,m}^{A,B}$ can be represented explicitly in Table \ref{tab:1}.
  \begin{table}[h!]
	\caption{Explicit representation of $Q_{k,m}^{A,B}$ for $r,s \in \mathbb{N}_{0}$ }
	\label{tab:1}
	\centering
	\begin{tabular}{ c c c c c c }
		\hline
		$Q_{k,m}^{A,B}$ &k=0 & k=1 & k=2 & \ldots & k=r \\[2ex] \hline
		$m=0$ & $I$  & $A$  & $A^{2}$  &\ldots& $A^{r}$ \\[2ex]\hline
		$m=1$ & $B$ & $AB+BA$  & $A^{2}B+ABA+BA^{2}$&  \ldots&$A^{r}B+\ldots+BA^{r}$\\[2ex]\hline
		$m=2$ & $B^{2}$  & $AB^{2}+BAB+B^{2}A$   &
		$\!\begin{aligned}
			&A^{2}B^{2}+ABAB+AB^{2}A\\
			&+BA^{2}B+BABA+B^{2}A^{2}
		\end{aligned}$ &\ldots& $A^{r}B^{2}+\ldots+B^{2}A^{r}$\\[2ex]\hline
	 \ldots & \ldots  & \ldots  & \ldots  &\ldots &\ldots\\[2ex]\hline
		$m=s$ &$B^{s}$  &$AB^{s}+\ldots+B^{s}A$  & $A^{2}B^{s}+\ldots+B^{s}A^{2}$  &\ldots& $A^{r}B^{s}+\ldots+B^{s}A^{r}$\\[2ex]\hline
	\end{tabular}
\end{table}

From the above table, it can be easily seen that, in the case of commutativity $AB = BA$, we have
$Q_{k,m}^{A,B}:=\binom{k+m}{m}A^{k}B^{m}$, $k,m\in\mathbb{N}_{0}$.

\begin{theorem}
	A linear operator $Q_{k,m}^{A,B}\in\mathscr{B}(Y)$ for $k,m\in\mathbb{N}_{0}$ has the following properties:

	$(i)$ $Q_{k,m}^{A,B}$, $k,m\in\mathbb{N}$ generalizes classical Pascal's rule for linear operators $A,B\in\mathscr{B}(Y)$ as follows:
	\begin{equation}\label{gen-Pascal}
	Q_{k,m}^{A,B}=AQ_{k-1,m}^{A,B}+BQ_{k,m-1}^{A,B}, \quad k,m \in \mathbb{N};
	\end{equation} 
	
	$(ii)$ If $AB=BA$, then we have
	\begin{equation}\label{commutative}
	Q_{k,m}^{A,B}=\binom{k+m}{m}A^{k}B^{m}, \quad k,m\in\mathbb{N}_{0}.
	\end{equation}
\end{theorem}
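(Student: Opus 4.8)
The plan is to establish $(i)$ by a direct manipulation of the defining recurrence, and then to deduce $(ii)$ by induction, using the Pascal-type identity from $(i)$ as the engine. For part $(i)$, I would begin from the definition $Q_{k,m}^{A,B}=\sum_{l=0}^{k}A^{k-l}BQ_{l,m-1}^{A,B}$ for $k,m\in\mathbb{N}$ and peel off the top index $l=k$. The isolated term is $A^{0}BQ_{k,m-1}^{A,B}=BQ_{k,m-1}^{A,B}$, which already matches the second summand of the claimed identity. In the remaining sum over $l=0,\dots,k-1$ I would factor one copy of $A$ out of each $A^{k-l}=A\cdot A^{(k-1)-l}$, so that $\sum_{l=0}^{k-1}A^{k-l}BQ_{l,m-1}^{A,B}=A\sum_{l=0}^{k-1}A^{(k-1)-l}BQ_{l,m-1}^{A,B}=AQ_{k-1,m}^{A,B}$, where the last equality is precisely the definition of $Q_{k-1,m}^{A,B}$. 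Combining the two pieces yields $Q_{k,m}^{A,B}=AQ_{k-1,m}^{A,B}+BQ_{k,m-1}^{A,B}$, as required; note that no commutativity of $A$ and $B$ is used here.

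For part $(ii)$, assuming $AB=BA$, I would argue by induction on the total degree $n=k+m$. The base cases $k=0$ or $m=0$ follow directly from the defining clauses, since $Q_{0,m}^{A,B}=B^{m}=\binom{m}{m}A^{0}B^{m}$ and $Q_{k,0}^{A,B}=A^{k}=\binom{k}{0}A^{k}B^{0}$. For the inductive step with $k,m\geq1$ I would apply the Pascal-type identity $(i)$ together with the two inductive hypotheses to write $Q_{k,m}^{A,B}=A\binom{k+m-1}{m}A^{k-1}B^{m}+B\binom{k+m-1}{m-1}A^{k}B^{m-1}$. Here commutativity enters decisively: it lets me slide $B$ past $A^{k}$ via $BA^{k}=A^{k}B$, so that both terms collapse to scalar multiples of the common operator $A^{k}B^{m}$. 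The coefficient then becomes $\binom{k+m-1}{m}+\binom{k+m-1}{m-1}$, which equals $\binom{k+m}{m}$ by the classical Pascal rule for binomial coefficients, closing the induction.

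The computations are elementary, so the only real care needed is bookkeeping at the boundary of the double index. In part $(i)$ one must verify that the factoring argument remains valid when $k=1$, so that the residual sum equals $Q_{0,m}^{A,B}=B^{m}$ read off the base clause rather than the recurrence; a direct check confirms this. In part $(ii)$ one must ensure that the induction on $k+m$ reaches every admissible pair $(k,m)$ through its two neighbours $(k-1,m)$ and $(k,m-1)$ without ever leaving $\mathbb{N}_{0}\times\mathbb{N}_{0}$, which is exactly why the base cases are taken along the two coordinate axes. I do not anticipate a genuine obstacle beyond this indexing, since commutativity is invoked exactly once, in the single step $BA^{k}=A^{k}B$.
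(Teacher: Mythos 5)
Your proof is correct. For part $(i)$ you and the paper do essentially the same thing: the paper dresses the argument up as an induction on $k$, but its ``inductive step'' never actually uses the inductive hypothesis --- it just peels off the $l=k$ term and factors $A$ out of the rest, exactly as you do directly; your explicit attention to the $k=1$ boundary (where the residual sum must be matched against the base clause $Q_{0,m}^{A,B}=B^{m}$ rather than the recurrence) is the same point the paper handles via the identity $Q_{0,m}^{A,B}=BQ_{0,m-1}^{A,B}$.

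For part $(ii)$ your route is genuinely different from the paper's. The paper inducts on $m$ alone, working straight from the defining sum: it writes $Q_{k,n+1}^{A,B}=\sum_{l=0}^{k}A^{k-l}B\binom{l+n}{n}A^{l}B^{n}$, commutes $B$ past each $A^{l}$ to pull out the common factor $A^{k}B^{n+1}$, and then closes with the hockey-stick identity $\sum_{l=0}^{k}\binom{l+n}{n}=\binom{k+n+1}{n+1}$. You instead induct on the total degree $k+m$ and use part $(i)$ as the engine, reducing everything to the classical Pascal identity $\binom{k+m-1}{m}+\binom{k+m-1}{m-1}=\binom{k+m}{m}$. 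Your version has two advantages: it makes the theorem's own narrative (that $Q_{k,m}^{A,B}$ ``generalizes Pascal's rule'') do actual work, showing that the operator recurrence and the binomial recurrence coincide once the boundary data match, and it invokes commutativity exactly once, in the single step $BA^{k}=A^{k}B$, rather than inside every summand. The paper's version is marginally more self-contained in that it never relies on part $(i)$, at the cost of needing the hockey-stick summation. Both arguments are complete; your indexing care at the axes $k=0$ and $m=0$ is exactly what is needed for the double induction to reach every pair in $\mathbb{N}_{0}\times\mathbb{N}_{0}$.
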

\begin{proof}
	
	$(i)$ By making use of the mathematical induction principle, we can prove \eqref{gen-Pascal} is true for all $k \in \mathbb{N}$. It is obvious that the relation \eqref{gen-Pascal} is true for $k=1$. With the help of \eqref{important} we obtain:
	\begin{align*}
	Q_{1,m}^{A,B}=\sum_{l=0}^{1}A^{1-l}BQ_{l,m-1}^{A,B}=ABQ_{0,m-1}^{A,B}+BQ_{1,m-1}^{A,B}=AQ_{0,m}^{A,B}+BQ_{1,m-1}^{A,B},
	\end{align*}
	where $Q_{0,m}^{A,B}=BQ_{0,m-1}^{A,B}$.
	
	Suppose that the formula \eqref{gen-Pascal} is true for $(k-1)\in\mathbb{N}$. Then, by applying definition \eqref{important} for $(k-1)$-th case, we prove  the statement \eqref{gen-Pascal} for $k\in\mathbb{N}$ as below:
	\begin{align*}
	&Q_{k,m}^{A,B}=\sum_{l=0}^{k}A^{k-l}BQ_{l,m-1}^{A,B}=\sum_{l=0}^{k-1}A^{k-l}BQ_{l,m-1}^{A,B}+BQ_{k,m-1}^{A,B}\\&=A\sum_{l=0}^{k-1}A^{k-l-1}BQ_{l,m-1}^{A,B}+BQ_{k,m-1}^{A,B}=AQ_{k-1,m}^{A,B}+BQ_{k,m-1}^{A,B}.
	\end{align*}
	
	To show $(ii)$ we will use proof by induction with regard to $m\in\mathbb{N}_{0}$ via the definition of $Q_{k,m}^{A,B}$ \eqref{important}.  
	
	Obviously, for $m=0,1$, we have
	\begin{equation*}
	Q_{k,0}^{A,B}\coloneqq A^{k}, \quad Q_{k,1}^{A,B}=\sum_{l=0}^{k}A^{k-l}BQ_{l,0}^{A,B}=\sum_{l=0}^{k}A^{k-l}BA^{l}=(k+1)A^{k}B=\binom{k+1}{1}A^{k}B. 
	\end{equation*}
	
	Suppose that it is true for $m=n\in\mathbb{N}$:
	\begin{equation*}
	Q_{k,n}^{A,B}=\binom{k+n}{n}A^{k}B^{n}.
	\end{equation*}
	Let us prove it for $m=n+1$:
	\begin{align*}
	Q_{k,n+1}^{A,B}&=\sum_{l=0}^{k}A^{k-l}BQ_{l,n}^{A,B}=\sum_{l=0}^{k}A^{k-l}B\binom{l+n}{n}A^{l}B^{n}\\&=A^{k}B^{n+1}\sum_{l=0}^{k}\binom{l+n}{n}=\binom{k+n+1}{n+1}A^{k}B^{n+1}.
	\end{align*}
	The proof is complete.	
\end{proof}

According to the above theorem, a linear operator $Q_{k,m}^{A,B}$ for $k,m\in\mathbb{N}$ satisfies the following Pascal's rule for permutable linear operators $A,B\in\mathscr{B}(Y)$ as follows:
\begin{align}
\binom{k+m}{m}A^{k}B^{m}=\binom{k+m-1}{m}A^{k-1}B^{m}+\binom{k+m-1}{m-1}A^{k}B^{m-1}, \quad k,m\in\mathbb{N}.
\end{align}
By using the property of $Q_{k,m}^{A,B}$ \eqref{commutative} we define the following bivariate Mittag-Leffler function via permutable linear bounded operators which is similar to \eqref{bivtype}.
\begin{definition}
	We define a Mittag-Leffler  function $ E_{\alpha,\beta,\gamma}(A(\cdot)^{\alpha},B(\cdot)^{\beta}) :\mathbb{R}\to Y$ generated by permutable linear bounded operators $A,B\in\mathscr{B}(Y)$ as follows:
	\begin{equation}\label{per}
	E_{\alpha,\beta,\gamma}(At^{\alpha},Bt^{\beta})\coloneqq\sum_{k=0}^{\infty}\sum_{m=0}^{\infty}\binom{k+m}{m}A^{k}B^{m}\frac{t^{k\alpha+m\beta}}{\Gamma(k\alpha+m\beta+\gamma)}, \quad \alpha,\beta>0, \quad \gamma \in \mathbb{R}.
	\end{equation}	
\end{definition}
In the special case, bivariate Mittag-Leffler function \eqref{per} via commutative linear bounded operators converts to the product of classical exponential functions as follows.
\begin{lemma}
	If $\alpha=\beta=\gamma=1$, then we get the double exponential function:
	\begin{equation*}
	E_{1,1,1}(At,Bt)=\exp(At)\exp(Bt)=\exp((A+B)t), \quad  t \in \mathbb{R}.
	\end{equation*}
\end{lemma}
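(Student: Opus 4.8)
The plan is to substitute the special parameter values directly into the definition \eqref{per} and reduce the resulting double series to a product of two operator exponential series. First I would set $\alpha=\beta=\gamma=1$ in \eqref{per}, which gives
\[
E_{1,1,1}(At,Bt)=\sum_{k=0}^{\infty}\sum_{m=0}^{\infty}\binom{k+m}{m}A^{k}B^{m}\frac{t^{k+m}}{\Gamma(k+m+1)}.
\]
The key algebraic observation is that $\Gamma(k+m+1)=(k+m)!$ together with $\binom{k+m}{m}=\frac{(k+m)!}{k!\,m!}$ causes the factorial in the denominator to cancel, leaving
\[
\binom{k+m}{m}\frac{1}{\Gamma(k+m+1)}=\frac{1}{k!\,m!}.
\]

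Next I would use this simplification to factor the double sum as a product of two independent series. Since $A,B\in\mathscr{B}(Y)$ are bounded, both series $\sum_{k\geq0}(At)^{k}/k!$ and $\sum_{m\geq0}(Bt)^{m}/m!$ converge absolutely in the operator norm, so the double series is absolutely convergent and may be rearranged freely. This yields
\[
E_{1,1,1}(At,Bt)=\left(\sum_{k=0}^{\infty}\frac{(At)^{k}}{k!}\right)\left(\sum_{m=0}^{\infty}\frac{(Bt)^{m}}{m!}\right)=\exp(At)\exp(Bt).
\]

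Finally, to obtain the last equality $\exp(At)\exp(Bt)=\exp((A+B)t)$, I would invoke the commutativity hypothesis $AB=BA$ that underlies the definition \eqref{per}; under this assumption the standard identity for exponentials of commuting bounded operators applies, which can itself be verified by forming the Cauchy product of the two series and collecting terms of equal total degree via the binomial theorem $(A+B)^{n}=\sum_{j=0}^{n}\binom{n}{j}A^{j}B^{n-j}$.

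The main obstacle is comparatively mild: it is to justify the interchange and factorization of the double series, which follows from absolute convergence in $\mathscr{B}(Y)$, and to make explicit that the final exponential-combining step genuinely requires the permutability $AB=BA$, since this identity fails in general for noncommuting operators.
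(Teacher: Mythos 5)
Your proposal is correct and follows essentially the same route as the paper's own proof: substitute $\alpha=\beta=\gamma=1$, cancel $\binom{k+m}{m}/(k+m)!=1/(k!\,m!)$, factor the double series into $\exp(At)\exp(Bt)$, and combine the exponentials using $AB=BA$. Your added justifications (absolute convergence in $\mathscr{B}(Y)$ for the rearrangement, and the explicit role of permutability in the final step) are details the paper leaves implicit but do not change the argument.
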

\begin{proof}
	Applying the formula \eqref{per}, we attain
	\begin{align*}
	E_{1,1,1}(At,Bt)&=\sum_{k=0}^{\infty}\sum_{m=0}^{\infty}\binom{k+m}{m}A^{k}B^{m}\frac{t^{k+m}}{(k+m)!}\\
	&=\sum_{k=0}^{\infty}A^{k}\frac{t^{k}}{k!}\sum_{m=0}^{\infty}B^{m}\frac{t^{m}}{m!}=\exp(At)\exp(Bt)=\exp((A+B)t),\quad  t \in \mathbb{R}.
	\end{align*}
\end{proof}
	
The following lemma plays a crucial role for solving the  given Cauchy problem \eqref{mtde-1} with linear bounded operators. In general case, it holds true whenever  $\alpha>0$, $\alpha>\beta$, $\gamma \in \mathbb{R}$.
\begin{lemma}\label{Q^A,B}
	For $A,B\in\mathscr{B}(Y)$ which are satisfying $AB\neq BA$, we have:
	\begin{align}\label{eq1}
	&\mathscr{L}^{-1}\left\lbrace \frac{s^{\gamma}}{s^{(m+1)\beta}} \left[ (s^{\alpha-\beta}I-A)^{-1}B\right]^{m} (s^{\alpha-\beta}I-A)^{-1}\right\rbrace (t)\nonumber\\&=\sum_{k=0}^{\infty}\frac{Q_{k,m}^{A,B}}{\Gamma(k(\alpha-\beta)+m\alpha+\alpha-\gamma)}t^{k(\alpha-\beta)+m\alpha+\alpha-\gamma-1}, \quad m\in\mathbb{N}_{0}\coloneqq\mathbb{N}\cup \left\lbrace 0\right\rbrace.
	\end{align}

\end{lemma}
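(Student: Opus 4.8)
The plan is to argue by induction on $m\in\mathbb{N}_{0}$, exploiting the recursive definition \eqref{important} of $Q_{k,m}^{A,B}$ together with the Neumann series for the resolvent. Throughout, write $R(s)\coloneqq(s^{\alpha-\beta}I-A)^{-1}$ and $F_{m}(s)\coloneqq s^{\gamma-(m+1)\beta}\bigl[R(s)B\bigr]^{m}R(s)$ for the operator inside the transform in \eqref{eq1}. For $s$ large enough that $\|A\|s^{-(\alpha-\beta)}<1$, Lemma \ref{sumA} gives the expansion $R(s)=\sum_{k=0}^{\infty}A^{k}s^{-(k+1)(\alpha-\beta)}$, convergent in $\mathscr{B}(Y)$, which legitimises the term-by-term manipulations below; the resulting identity for the inverse transform then extends to all admissible $t$ by uniqueness/analyticity.

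For the base case $m=0$ the bracketed factor is the identity, so $F_{0}(s)=s^{\gamma-\beta}R(s)=\sum_{k=0}^{\infty}A^{k}s^{\gamma-\beta-(k+1)(\alpha-\beta)}$. Since $\beta+(k+1)(\alpha-\beta)=k(\alpha-\beta)+\alpha$, each summand equals $A^{k}s^{-(k(\alpha-\beta)+\alpha-\gamma)}$, and inverting the elementary transform $\mathscr{L}^{-1}\{s^{-\rho}\}(t)=t^{\rho-1}/\Gamma(\rho)$ term by term (equivalently, applying Lemma \ref{Lem:PrabhLap} with third parameter $1$) yields exactly the right-hand side of \eqref{eq1} with $Q_{k,0}^{A,B}=A^{k}$.

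For the inductive step I would use the factorisation $F_{m}(s)=\bigl(s^{-\beta}R(s)B\bigr)F_{m-1}(s)$, which peels one copy of $R(s)B$ and one factor $s^{-\beta}$ off the left. Laplace inversion turns this operator product into a convolution, with the left factor kept on the left since $A,B$ need not commute. Expanding as above gives $\mathscr{L}^{-1}\{s^{-\beta}R(s)B\}(t)=\sum_{k=0}^{\infty}A^{k}B\,t^{k(\alpha-\beta)+\alpha-1}/\Gamma(k(\alpha-\beta)+\alpha)$; inserting the inductive hypothesis for $f_{m-1}\coloneqq\mathscr{L}^{-1}\{F_{m-1}\}$ and evaluating each convolution of power functions by the Beta integral $\int_{0}^{t}(t-\tau)^{p-1}\tau^{q-1}\,d\tau=\Gamma(p)\Gamma(q)t^{p+q-1}/\Gamma(p+q)$, the two $\Gamma$-factors cancel the denominators and leave the double series $\sum_{k,l\geq0}A^{k}BQ_{l,m-1}^{A,B}\,t^{(k+l)(\alpha-\beta)+(m+1)\alpha-\gamma-1}/\Gamma\bigl((k+l)(\alpha-\beta)+(m+1)\alpha-\gamma\bigr)$.

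The decisive step is to collect terms by $n=k+l$: for fixed $n$ the operator coefficient becomes $\sum_{k=0}^{n}A^{k}BQ_{n-k,m-1}^{A,B}$, which after the substitution $k\mapsto n-l$ is precisely $\sum_{l=0}^{n}A^{n-l}BQ_{l,m-1}^{A,B}=Q_{n,m}^{A,B}$ by the defining recursion \eqref{important}. Relabelling $n$ as $k$ and using $(m+1)\alpha=m\alpha+\alpha$ recovers \eqref{eq1} for $m$, closing the induction. I expect the main difficulty to be bookkeeping rather than conceptual: one must preserve the noncommutative product $A^{k}BQ_{l,m-1}^{A,B}$ in the correct left-to-right order throughout the convolution so that it matches \eqref{important} verbatim, and one must justify interchanging the Neumann and Mittag-Leffler series with the convolution integral (valid by absolute convergence for large $s$) before passing back to all $t$.
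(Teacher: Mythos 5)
Your proposal is correct and follows essentially the same route as the paper: induction on $m$, peeling off the left factor $s^{-\beta}(s^{\alpha-\beta}I-A)^{-1}B$, inverting it via the (Neumann/Prabhakar) series expansion, evaluating the resulting convolution of power functions by the Beta integral, and collapsing the double series by the Cauchy product so that $\sum_{l=0}^{k}A^{k-l}BQ_{l,m-1}^{A,B}=Q_{k,m}^{A,B}$ closes the induction. The only cosmetic difference is that the paper invokes Lemma \ref{Lem:PrabhLap} for the base case and writes the factors as Mittag-Leffler functions, which is the same computation.
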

\begin{proof}
To prove, we will use a mathematical induction principle with regard to $m\in \mathbb{N}_{0}$.
Obviously, according to the relation \eqref{ad}, \eqref{eq1} is true for $m=0$, which establishes the basis for induction: 
\begin{align}\label{1-relation}
&\mathscr{L}^{-1}\left\lbrace s^{\gamma-\beta}(s^{\alpha-\beta}I-A)^{-1}\right\rbrace (t)=t^{\alpha-\gamma-1}E_{\alpha-\beta,\alpha-\gamma}^{1}(At^{\alpha-\beta})\nonumber\\&=t^{\alpha-\gamma-1}E_{\alpha-\beta,\alpha-\gamma}(At^{\alpha-\beta})=\sum_{k=0}^{\infty}A^{k}\frac{t^{k(\alpha-\beta)+\alpha-\gamma-1}}{\Gamma(k(\alpha-\beta)+\alpha-\gamma)}\nonumber\\&=\sum_{k=0}^{\infty}Q_{k,0}^{A,B}\frac{t^{k(\alpha-\beta)+\alpha-\gamma-1}}{\Gamma(k(\alpha-\beta)+\alpha-\gamma)}, \quad \text{where} \quad Q_{k,0}^{A,B}\coloneqq A^{k}, \quad k\in \mathbb{N}_{0}.
\end{align}
For $m=1$, we use the convolution property of Laplace integral transform and formula \eqref{1-relation}:
\allowdisplaybreaks
\begin{align}\label{relation-2}
&\mathscr{L}^{-1}\left\lbrace s^{\gamma-2\beta}(s^{\alpha-\beta}I-A)^{-1}B(s^{\alpha-\beta}I-A)^{-1}\right\rbrace (t)\nonumber\\=&\mathscr{L}^{-1}\left\lbrace s^{-\beta}(s^{\alpha-\beta}I-A)^{-1}B\right\rbrace (t)\ast\mathscr{L}^{-1}\left\lbrace s^{\gamma-\beta}(s^{\alpha-\beta}I-A)^{-1}\right\rbrace(t)\nonumber\\=&t^{\alpha-1}E_{\alpha-\beta,\alpha}(At^{\alpha-\beta})B\ast t^{\alpha-\gamma-1}E_{\alpha-\beta,\alpha-\gamma}(At^{\alpha-\beta})\nonumber\\=&
\int\limits_{0}^{t}(t-s)^{\alpha-1}E_{\alpha-\beta,\alpha}(A(t-s)^{\alpha-\beta})B
s^{\alpha-\gamma-1}E_{\alpha-\beta,\alpha-\gamma}(As^{\alpha-\beta})\mathrm{d}s.
\end{align}
Then interchanging the order of integration and summation in \eqref{relation-2} which is permissible in accordance with the uniform convergence of the series \eqref{ML-2}, we attain:
\allowdisplaybreaks
\begin{align}\label{relation-3}
&\mathscr{L}^{-1}\left\lbrace s^{\gamma-2\beta}(s^{\alpha-\beta}I-A)^{-1}B(s^{\alpha-\beta}I-A)^{-1}\right\rbrace (t)\nonumber\\=&\sum_{k=0}^{\infty}\sum_{l=0}^{\infty}\frac{A^{k}BA^{l}}{\Gamma(k(\alpha-\beta)+\alpha)\Gamma(l(\alpha-\beta)+\alpha-\gamma)}\int\limits_{0}^{t}(t-s)^{k(\alpha-\beta)+\alpha-1}s^{l(\alpha-\beta)+\alpha-\gamma-1}\mathrm{d}s\nonumber\\=&\sum_{k=0}^{\infty}\sum_{l=0}^{\infty}\frac{A^{k}BA^{l}}{\Gamma(k(\alpha-\beta)+\alpha)\Gamma(l(\alpha-\beta)+\alpha-\gamma)}t^{(k+l)(\alpha-\beta)+2\alpha-\gamma-1}\mathcal{B}(k(\alpha-\beta)+\alpha,l(\alpha-\beta)+\alpha-\gamma)\nonumber\\=&\sum_{k=0}^{\infty}\sum_{l=0}^{\infty}\frac{A^{k}BA^{l}}{\Gamma((k+l)(\alpha-\beta)+2\alpha-\gamma)}t^{(k+l)(\alpha-\beta)+2\alpha-\gamma-1},
\end{align}
where $\mathcal{B}(\cdot,\cdot)$ is a well-known beta function.

Applying Cauchy product formula to the double infinity series in \eqref{relation-3}, we get:
\allowdisplaybreaks
\begin{align}\label{relation-4}
&\mathscr{L}^{-1}\left\lbrace s^{\gamma-2\beta}(s^{\alpha-\beta}I-A)^{-1}B(s^{\alpha-\beta}I-A)^{-1}\right\rbrace (t)\nonumber\\=&\sum_{k=0}^{\infty}\sum_{l=0}^{k}\frac{A^{k-l}BA^{l}}{\Gamma(k(\alpha-\beta)+2\alpha-\gamma)}t^{k(\alpha-\beta)+2\alpha-\gamma-1}\nonumber\\
=&\sum_{k=0}^{\infty}\sum_{l=0}^{k}\frac{A^{k-l}BQ_{l,0}^{A,B}}{\Gamma(k(\alpha-\beta)+2\alpha-\gamma)}t^{k(\alpha-\beta)+2\alpha-\gamma-1}\nonumber\\=&\sum_{k=0}^{\infty}\frac{Q_{k,1}^{A,B}}{\Gamma(k(\alpha-\beta)+\alpha+\alpha-\gamma)}t^{k(\alpha-\beta)+\alpha+\alpha-\gamma-1}, \quad\text{where} \quad Q_{k,1}^{A,B}\coloneqq\sum_{l=0}^{k} A^{k-l}BQ_{l,0}^{A,B},\quad k\in \mathbb{N}_{0}.
\end{align}
\allowdisplaybreaks
To verify the induction step, we assume that \eqref{eq1} holds true for $m=n$ where $n\in\mathbb{N}_{0}$:

\begin{align}\label{relation-5}
&\mathscr{L}^{-1}\left\lbrace s^{\gamma-(n+1)\beta}\left[ (s^{\alpha-\beta}I-A)^{-1}B\right]^{n} (s^{\alpha-\beta}I-A)^{-1}\right\rbrace (t)\nonumber\\=&\sum_{k=0}^{\infty}\sum_{l=0}^{k}\frac{A^{k-l}BQ_{l,n-1}^{A,B}}{\Gamma(k(\alpha-\beta)+(n+1)\alpha-\gamma)}t^{k(\alpha-\beta)+(n+1)\alpha-\gamma-1}\nonumber\\=&\sum_{k=0}^{\infty}\frac{Q_{k,n}^{A,B}}{\Gamma(k(\alpha-\beta)+n\alpha+\alpha-\gamma)}t^{k(\alpha-\beta)+n\alpha+\alpha-\gamma-1}, \quad\text{where} \quad Q_{k,n}^{A,B}\coloneqq\sum_{l=0}^{k} A^{k-l}BQ_{l,n-1}^{A,B},\quad k\in \mathbb{N}_{0}.
\end{align}
Then it yields that for $m=n+1$ as follows:
\allowdisplaybreaks
\begin{align}\label{relation-6}
&\mathscr{L}^{-1}\left\lbrace s^{\gamma-(n+2)\beta}\left[ (s^{\alpha-\beta}I-A)^{-1}B\right]^{n+1} (s^{\alpha-\beta}I-A)^{-1}\right\rbrace (t)\nonumber\\
=&\mathscr{L}^{-1}\left\lbrace s^{-\beta}(s^{\alpha-\beta}I-A)^{-1}B\right\rbrace (t)\ast\mathscr{L}^{-1}\left\lbrace s^{\gamma-(n+1)\beta}\left[ (s^{\alpha-\beta}I-A)^{-1}B\right] ^{n}(s^{\alpha-\beta}I-A)^{-1}\right\rbrace(t)\nonumber\\=&t^{\alpha-1}E_{\alpha-\beta,\alpha}(At^{\alpha-\beta})B\ast \sum_{l=0}^{\infty}\frac{Q_{l,n}^{A,B}}{\Gamma(l(\alpha-\beta)+(n+1)\alpha-\gamma)}t^{l(\alpha-\beta)+(n+1)\alpha-\gamma-1}\nonumber\\=&
\int\limits_{0}^{t}(t-s)^{\alpha-1}E_{\alpha-\beta,\alpha}(A(t-s)^{\alpha-\beta})B
\sum_{l=0}^{\infty}\frac{Q_{l,n}^{A,B}}{\Gamma(l(\alpha-\beta)+(n+1)\alpha-\gamma)}s^{l(\alpha-\beta)+(n+1)\alpha-\gamma-1}\mathrm{d}s\nonumber\\
=&\sum_{k=0}^{\infty}\sum_{l=0}^{\infty}\frac{A^{k}BQ_{l,n}^{A,B}}{\Gamma(k(\alpha-\beta)+\alpha)\Gamma(l(\alpha-\beta)+(n+1)\alpha-\gamma)}\int\limits_{0}^{t}(t-s)^{k(\alpha-\beta)+\alpha-1}s^{l(\alpha-\beta)+(n+1)\alpha-\gamma-1}\mathrm{d}s\nonumber\\=&\sum_{k=0}^{\infty}\sum_{l=0}^{\infty}\frac{A^{k}BQ_{l,n}^{A,B}}{\Gamma(k(\alpha-\beta)+\alpha)\Gamma(l(\alpha-\beta)+(n+1)\alpha-\gamma)}t^{(k+l)(\alpha-\beta)+(n+1)\alpha+\alpha-\gamma-1}\nonumber\\\times&\mathcal{B}(k(\alpha-\beta)+\alpha,l(\alpha-\beta)+(n+1)\alpha-\gamma)\nonumber\\=&\sum_{k=0}^{\infty}\sum_{l=0}^{\infty}\frac{A^{k}BQ_{l,n}^{A,B}}{\Gamma((k+l)(\alpha-\beta)+(n+1)\alpha+\alpha-\gamma)}t^{(k+l)(\alpha-\beta)+(n+1)\alpha+\alpha-\gamma-1}\nonumber\\
=&\sum_{k=0}^{\infty}\sum_{l=0}^{k}\frac{A^{k-l}BQ_{l,n}^{A,B}}{\Gamma(k(\alpha-\beta)+(n+1)\alpha+\alpha-\gamma)}t^{k(\alpha-\beta)+(n+1)\alpha+\alpha-\gamma-1}\nonumber\\=&\sum_{k=0}^{\infty}\frac{Q_{k,n+1}^{A,B}}{\Gamma(k(\alpha-\beta)+(n+1)\alpha+\alpha-\gamma)}t^{k(\alpha-\beta)+(n+1)\alpha+\alpha-\gamma-1}, \quad\text{where} \quad Q_{k,n+1}^{A,B}\coloneqq\sum_{l=0}^{k} A^{k-l}BQ_{l,n}^{A,B},\quad k\in \mathbb{N}_{0}.
\end{align}
Thus, \eqref{relation-6} holds true  whenever \eqref{relation-5} is true, and by the principle of mathematical induction, we conclude that the formula \eqref{eq1} holds true for all $m \in \mathbb{N}_{0}$.
\end{proof}

\begin{theorem}	Let $A,B\in\mathscr{B}(Y)$ with non-zero commutator, i.e., $\left[ A, B\right] \coloneqq AB- BA \neq 0$. Assume that $g(\cdot): \mathbb{J} \to Y$ and $\left( \prescript{C}{}{D^{\beta}_{0+}}x\right) (t)$ where $0<\beta\leq 1$ are exponentially bounded.
A mild solution $x(\cdot)\in \mathbb{C}^{2}(\mathbb{J},Y)$ of the Cauchy problem \eqref{mtde-1} can be represented as
\allowdisplaybreaks
\begin{align}
x(t)&=\left(I+ \sum_{k=0}^{\infty}\sum_{m=0}^{\infty}Q_{k,m}^{A,B}B\frac{t^{k(\alpha-\beta)+m\alpha+\alpha}}{\Gamma(k(\alpha-\beta)+m\alpha+\alpha+1)}\right)\eta+
\sum_{k=0}^{\infty}\sum_{m=0}^{\infty}Q_{k,m}^{A,B}\frac{t^{k(\alpha-\beta)+m\alpha+1}}{\Gamma(k(\alpha-\beta)+m\alpha+2)}\hat{\eta}\nonumber\\
&+\int\limits_{0}^{t}\sum_{k=0}^{\infty}\sum_{m=0}^{\infty}Q_{k,m}^{A,B}\frac{(t-s)^{k(\alpha-\beta)+m\alpha+\alpha-1}}{\Gamma(k(\alpha-\beta)+m\alpha+\alpha)}g(s)\mathrm{d}s\nonumber\\&=\left( I+t^{\alpha}\mathscr{E}_{\alpha-\beta,\alpha,\alpha+1}^{A,B}(t)B\right) \eta+t\mathscr{E}_{\alpha-\beta,\alpha,2}^{A,B}(t)\hat{\eta}+\int\limits_{0}^{t}(t-s)^{\alpha-1}\mathscr{E}_{\alpha-\beta,\alpha,\alpha}^{A,B}(t-s)g(s)\mathrm{d}s, \quad t>0,
\end{align}
where $I \in \mathscr{B}(Y)$ is an identity operator.
\end{theorem}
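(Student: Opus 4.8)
The plan is to solve \eqref{mtde-1} by the Laplace transform method, which is exactly what the preparatory Lemma \ref{Q^A,B} is designed to support. First I would invoke Theorem \ref{thm2} — under the stated exponential boundedness of $g$ and $\prescript{C}{}{D^{\beta}_{0+}}x$ — to guarantee that $x$, $\prescript{C}{}{D^{\alpha}_{0+}}x$ and $\prescript{C}{}{D^{\beta}_{0+}}x$ all possess Laplace transforms. Applying $\mathscr{L}$ to \eqref{mtde-1} and using the Caputo rules for $\alpha\in(1,2]$ and $\beta\in(0,1]$ gives $s^{\alpha}X(s)-s^{\alpha-1}\eta-s^{\alpha-2}\hat{\eta}-A(s^{\beta}X(s)-s^{\beta-1}\eta)=BX(s)+G(s)$, where $X=\mathscr{L}\{x\}$ and $G=\mathscr{L}\{g\}$. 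Collecting the terms containing $X(s)$ yields
\[
(s^{\alpha}I - s^{\beta}A - B)X(s) = (s^{\alpha-1}I - s^{\beta-1}A)\eta + s^{\alpha-2}\hat{\eta} + G(s),
\]
so that $X(s) = (s^{\alpha}I - s^{\beta}A - B)^{-1}[(s^{\alpha-1}I - s^{\beta-1}A)\eta + s^{\alpha-2}\hat{\eta} + G(s)]$.

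The central algebraic step is to factor the symbol as $s^{\alpha}I - s^{\beta}A - B = s^{\beta}(s^{\alpha-\beta}I - A)[I - s^{-\beta}(s^{\alpha-\beta}I-A)^{-1}B]$ and, for $\mathrm{Re}(s)$ large enough that $\|s^{-\beta}(s^{\alpha-\beta}I-A)^{-1}B\|<1$, expand the second factor via the Neumann series of Lemma \ref{sumA}. This produces the resolvent expansion
\[
(s^{\alpha}I - s^{\beta}A - B)^{-1} = \sum_{m=0}^{\infty} s^{-(m+1)\beta}\left[(s^{\alpha-\beta}I-A)^{-1}B\right]^{m}(s^{\alpha-\beta}I-A)^{-1},
\]
which is precisely the family of kernels inverted in Lemma \ref{Q^A,B}. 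Before inverting the $\eta$-part I would use the identity $s^{\alpha-1}I - s^{\beta-1}A = s^{-1}[(s^{\alpha}I - s^{\beta}A - B)+B]$, which collapses the leading contribution to $s^{-1}\eta$ (producing the operator $I$ in the final formula) plus $s^{-1}(s^{\alpha}I - s^{\beta}A - B)^{-1}B\eta$.

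It then remains to apply Lemma \ref{Q^A,B} term by term with three choices of the parameter $\gamma$: taking $\gamma=-1$ for the term $s^{-1}(s^{\alpha}I - s^{\beta}A - B)^{-1}B\eta$ produces $t^{\alpha}\mathscr{E}_{\alpha-\beta,\alpha,\alpha+1}^{A,B}(t)B\eta$; taking $\gamma=\alpha-2$ for the $\hat{\eta}$-term produces $t\,\mathscr{E}_{\alpha-\beta,\alpha,2}^{A,B}(t)\hat{\eta}$; and taking $\gamma=0$ for the $G(s)$-term, combined with the convolution theorem, produces $\int_{0}^{t}(t-s)^{\alpha-1}\mathscr{E}_{\alpha-\beta,\alpha,\alpha}^{A,B}(t-s)g(s)\,\mathrm{d}s$. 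Re-summing over $m$ and $k$ and recognising the double series as the operator Mittag-Leffler functions $\mathscr{E}^{A,B}_{\alpha-\beta,\alpha,\gamma}$ assembles the claimed representation; finally I would confirm $x\in\mathbb{C}^{2}(\mathbb{J},Y)$ and the initial conditions $x(0)=\eta$, $x'(0)=\hat{\eta}$ directly from the series.

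The main obstacle is the analytic bookkeeping around non-commutativity and convergence rather than any single hard estimate. One must justify the term-by-term Laplace inversion together with the interchange of the double summation with both the inversion and the convolution integral; this rests on the locally uniform convergence of the operator series defining $\mathscr{E}^{A,B}_{\alpha-\beta,\alpha,\gamma}$, inherited from the two-parameter Mittag-Leffler series \eqref{ML-2} used throughout the proof of Lemma \ref{Q^A,B}. Equally important is keeping the operator factors $A$ and $B$ in their correct order at every stage, since it is exactly this ordering that is encoded in $Q_{k,m}^{A,B}$ through the recursion \eqref{important}. The restriction to large $\mathrm{Re}(s)$ needed for the Neumann expansion is harmless, because the resulting time-domain series converges for all $t$ and the representation extends by the uniqueness of the Laplace transform.
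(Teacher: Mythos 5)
Your proposal is correct and follows essentially the same route as the paper: invoking Theorem \ref{thm2} for the existence of the Laplace transforms, factoring the symbol $s^{\alpha}I-s^{\beta}A-B$ and expanding the resolvent as a Neumann series, and then inverting term by term via Lemma \ref{Q^A,B} with exactly the parameter choices $\gamma=-1$, $\gamma=\alpha-2$ and $\gamma=0$ that the paper uses. The only cosmetic difference is that you make explicit the identity $s^{\alpha-1}I-s^{\beta-1}A=s^{-1}\left[(s^{\alpha}I-s^{\beta}A-B)+B\right]$, which the paper applies silently when it collapses the $\eta$-terms to $s^{-1}\eta+s^{-1}(s^{\alpha}I-As^{\beta}-B)^{-1}B\eta$.
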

\begin{proof}
	We recall that the existence of Laplace transform of $x(\cdot)$ and its Caputo derivatives $\prescript{C}{}{D^{\alpha}_{0^{+}}x(\cdot)}$ and $\prescript{C}{}{D^{\beta}_{0^{+}}x(\cdot)}$  for $ 1<\alpha\leq 2$ and $0<\beta\leq 1$, respectively, is guaranteed by Theorem \ref{thm2}.
	Thus, to find the mild solution $x(t)$ of \eqref{mtde-1} satisfying the  initial conditions $x(0)=\eta$, $x'(0)=\hat{\eta}$, we can use the Laplace integral transform. By assuming $T=\infty$, taking the Laplace transform on both sides of equation \eqref{mtde-1} and using the following facts that 
	\begin{align*}
	&\mathscr{L}\left\lbrace \prescript{C}{}{D^{\alpha}_{0^{+}}x(t)} \right\rbrace(s)=s^{\alpha}X(s)-s^{\alpha-1}\eta-s^{\alpha-2}\hat{\eta}, \\
  &\mathscr{L}\left\lbrace \prescript{C}{}{D^{\beta}_{0^{+}}x(t)} \right\rbrace(s)=s^{\beta}X(s)-s^{\beta-1}\eta,
	\end{align*}
	which implies that 
\begin{align*}
	\left( s^{\alpha} I-As^{\beta}-B\right) X(s)=s^{\alpha-1}\eta +s^{\alpha-2}\hat{\eta}-s^{\beta-1}A \eta +G(s),
\end{align*}
where $X(s)$ and $G(s)$ represent the Laplace integral transforms of $x(t)$ and $g(t)$, respectively. 

Thus, after solving the above equation with respect to the $X(s)$, we get 
\begin{align*}
	X(s)&=s^{\alpha-1}\left( s^{\alpha}I-As^{\beta}-B\right)^{-1}\eta+s^{\alpha-2} \left( s^{\alpha}I-As^{\beta}-B\right)^{-1}\hat{\eta}\\
	&-s^{\beta-1}\left( s^{\alpha}I-As^{\beta}-B\right)^{-1}A \eta +\left( s^{\alpha-1}I-As^{\beta}-B\right)^{-1}G(s)\\
	&=s^{-1}\eta+s^{-1}\left( s^{\alpha}I-As^{\beta}-B\right)^{-1}B\eta+s^{\alpha-2} \left( s^{\alpha}I-As^{\beta}-B\right)^{-1}\hat{\eta}\\&+\left( s^{\alpha-1}I-As^{\beta}-B\right)^{-1}G(s).
\end{align*}

On the other hand, in accordance with the relation \eqref{operator}, for sufficiently large $s$, such that

\begin{equation*}
\|(s^{\alpha-\beta}I-A)^{-1}Bs^{-\beta}\|< 1.
\end{equation*}

Thus, for nonpermutable linear operators $A,B\in\mathscr{B}(Y)$ and sufficiently large $s$, we have
\begin{align*}
	\left( s^{\alpha}I-As^{\beta}-B\right)^{-1}&=\left(s^{\beta}\left[  s^{\alpha-\beta}I-A-Bs^{-\beta}\right] \right)^{-1}\\
	&=\left(s^{\beta}(s^{\alpha-\beta}I-A)\left[ I-(s^{\alpha-\beta}I-A)^{-1}Bs^{-\beta}\right] \right) ^{-1}\\
	&=\left(s^{\beta}\left[ I-\left( s^{\alpha-\beta}I-A\right) ^{-1}Bs^{-\beta}\right]\right)^{-1}\left(s^{\alpha-\beta}I-A \right)^{-1} \\
	&=\left[I-(s^{\alpha-\beta}I-A)^{-1}Bs^{-\beta} \right]^{-1}s^{-\beta}\left(s^{\alpha-\beta}I-A \right)^{-1} \\
	&=\sum_{m=0}^{\infty}\frac{1}{s^{\beta m}}\left[ \left(s^{\alpha-\beta}I-A \right)^{-1}B \right]^{m}s^{-\beta}\left(s^{\alpha-\beta}I-A \right)^{-1}\\
	&=\sum_{m=0}^{\infty}\frac{1}{s^{(m+1)\beta}}\left[ \left(s^{\alpha-\beta}I-A \right)^{-1}B \right]^{m}\left(s^{\alpha-\beta}I-A \right)^{-1}
\end{align*}
Then, by taking inverse Laplace transform, we have
\allowdisplaybreaks
\begin{align} \label{x(t)}
	x(t)&=\mathscr{L}^{-1}\left\lbrace s^{-1}\right\rbrace(t) \eta+ \mathscr{L}^{-1}\left\lbrace\sum_{m=0}^{\infty}\frac{s^{-1}}{s^{ (m+1)\beta}}\left[ \left(s^{\alpha-\beta}I-A \right)^{-1}B \right]^{m}\left(s^{\alpha-\beta}I-A \right)^{-1}\right\rbrace (t)B\eta \nonumber\\
	&+\mathscr{L}^{-1}\left\lbrace\sum_{m=0}^{\infty} \frac{s^{\alpha-2}}{s^{(m+1)\beta}}\left[ \left(s^{\alpha-\beta}I-A \right)^{-1}B \right]^{m}\left(s^{\alpha-\beta}I-A \right)^{-1}\right\rbrace (t)\hat{\eta}\nonumber\\
	&+\mathscr{L}^{-1}\left\lbrace \sum_{m=0}^{\infty}\frac{1}{s^{(m+1)\beta}}\left[ \left(s^{\alpha-\beta}I-A \right)^{-1}B \right]^{m}\left(s^{\alpha-\beta}I-A \right)^{-1} G(s)\right\rbrace (t).
\end{align}
Therefore, in accordance with Lemma \ref{Q^A,B}, we acquire
\allowdisplaybreaks
\begin{align}
x(t)&=\left(I+ \sum_{k=0}^{\infty}\sum_{m=0}^{\infty}Q_{k,m}^{A,B}B\frac{t^{k(\alpha-\beta)+m\alpha+\alpha}}{\Gamma(k(\alpha-\beta)+m\alpha+\alpha+1)}\right)\eta+
\sum_{k=0}^{\infty}\sum_{m=0}^{\infty}Q_{k,m}^{A,B}\frac{t^{k(\alpha-\beta)+m\alpha+1}}{\Gamma(k(\alpha-\beta)+m\alpha+2)}\hat{\eta}\nonumber\\
&+\int\limits_{0}^{t}\sum_{k=0}^{\infty}\sum_{m=0}^{\infty}Q_{k,m}^{A,B}\frac{(t-s)^{k(\alpha-\beta)+m\alpha+\alpha-1}}{\Gamma(k(\alpha-\beta)+m\alpha+\alpha)}g(s)\mathrm{d}s\nonumber\\
&\coloneqq\left( I+t^{\alpha}\mathscr{E}_{\alpha-\beta,\alpha,\alpha+1}^{A,B}(t)B\right) \eta+t\mathscr{E}_{\alpha-\beta,\alpha,2}^{A,B}(t)\hat{\eta}+\int\limits_{0}^{t}(t-s)^{\alpha-1}\mathscr{E}_{\alpha-\beta,\alpha,\alpha}^{A,B}(t-s)g(s)\mathrm{d}s,\quad t>0.
\end{align}
\end{proof}

It should stressed out that the assumption on the exponential boundedness of the function $g(\cdot)$ and $\left( \prescript{C}{}{D^{\beta}_{0+}}x\right) (\cdot)$ where $0<\beta\leq 1$ (alternatively, $\left( \prescript{C}{}{D^{\alpha}_{0+}}x\right) (\cdot)$ for $1<\alpha\leq2$)  can be omitted. As is shown below, the statement of the above theorem holds for a more general function $g(\cdot)\in \mathbb{C}(\mathbb{J},Y)$.
\begin{theorem}
		Let $A,B\in\mathscr{B}(Y)$ with non-zero commutator, i.e., $\left[ A, B\right] \coloneqq AB- BA \neq 0$. 
	A mild solution $x(\cdot)\in \mathbb{C}^{2}(\mathbb{J},Y)$ of the Cauchy problem \eqref{mtde-1} can be represented as
	\allowdisplaybreaks
	\begin{align}
	x(t)&=\left(I+ \sum_{k=0}^{\infty}\sum_{m=0}^{\infty}Q_{k,m}^{A,B}B\frac{t^{k(\alpha-\beta)+m\alpha+\alpha}}{\Gamma(k(\alpha-\beta)+m\alpha+\alpha+1)}\right)\eta+
	\sum_{k=0}^{\infty}\sum_{m=0}^{\infty}Q_{k,m}^{A,B}\frac{t^{k(\alpha-\beta)+m\alpha+1}}{\Gamma(k(\alpha-\beta)+m\alpha+2)}\hat{\eta}\nonumber\\
	&+\int\limits_{0}^{t}\sum_{k=0}^{\infty}\sum_{m=0}^{\infty}Q_{k,m}^{A,B}\frac{(t-s)^{k(\alpha-\beta)+m\alpha+\alpha-1}}{\Gamma(k(\alpha-\beta)+m\alpha+\alpha)}g(s)\mathrm{d}s\nonumber\\
	&\coloneqq\left( I+t^{\alpha}\mathscr{E}_{\alpha-\beta,\alpha,\alpha+1}^{A,B}(t)B\right) \eta+t\mathscr{E}_{\alpha-\beta,\alpha,2}^{A,B}(t)\hat{\eta}+\int\limits_{0}^{t}(t-s)^{\alpha-1}\mathscr{E}_{\alpha-\beta,\alpha,\alpha}^{A,B}(t-s)g(s)\mathrm{d}s,\quad t>0.
	\end{align}
\end{theorem}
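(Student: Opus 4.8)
The plan is to abandon the Laplace transform, whose use in the previous theorem is exactly what forced the exponential-boundedness hypotheses (they served only to guarantee that the transforms exist), and instead to verify directly that the series on the right-hand side solves \eqref{mtde-1} for every continuous forcing. On the finite interval $\mathbb{J}=[0,T]$ the convolution $\int_{0}^{t}(t-s)^{\alpha-1}\mathscr{E}^{A,B}_{\alpha-\beta,\alpha,\alpha}(t-s)g(s)\,\mathrm{d}s$ and the two power series are manifestly well defined for any $g\in\mathbb{C}(\mathbb{J},Y)$, so the candidate is meaningful without any growth restriction; it then remains only to substitute it into \eqref{mtde-1}, which is precisely the role advertised in the introduction for $\mathscr{E}^{A,B}$ and $Q_{k,m}^{A,B}$ as a tool for testing candidate solutions.

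First I would secure the analytic prerequisites for termwise manipulation. Counting the monomials in \eqref{important} yields $\norm{Q_{k,m}^{A,B}}\le\binom{k+m}{m}\norm{A}^{k}\norm{B}^{m}$, whence each of the series for $x$ and for its formal Caputo derivatives is dominated on compact subsets of $\mathbb{J}$ by the scalar entire bivariate function $E_{\alpha-\beta,\alpha,\gamma}(\norm{A}t^{\alpha-\beta},\norm{B}t^{\alpha})$ of \eqref{bivtype}. This locally uniform convergence justifies exchanging $\prescript{C}{}{D^{\alpha}_{0+}}$ and $\prescript{C}{}{D^{\beta}_{0+}}$ with the double summation and, for the forced term, with the integral by means of the fractional Leibniz rule of Theorem \ref{thm-class}.

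Next I would differentiate termwise. For the power monomials I would use $\prescript{C}{}{D^{\mu}_{0+}}t^{p}=\frac{\Gamma(p+1)}{\Gamma(p-\mu+1)}t^{p-\mu}$ together with its boundary cases, namely that $\prescript{C}{}{D^{\alpha}_{0+}}$ annihilates constants and the linear term $t$ for $\alpha\in(1,2]$ (via \eqref{relation}); the latter is exactly what removes the $(k,m)=(0,0)$ block from $\prescript{C}{}{D^{\alpha}_{0+}}$ acting on the $t\hat{\eta}$-series and so makes the index shift align. For the forced term I would rewrite the convolution as $\sum_{k,m}Q_{k,m}^{A,B}\bigl(I^{k(\alpha-\beta)+m\alpha+\alpha}_{0+}g\bigr)(t)$, whose Caputo derivatives merely lower the integration order, the leading $(0,0)$ block returning $g$. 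Assembling $\prescript{C}{}{D^{\alpha}_{0+}}x-A\prescript{C}{}{D^{\beta}_{0+}}x-Bx$ and shifting $k\mapsto k-1$ in the contribution from $A\prescript{C}{}{D^{\beta}_{0+}}x$ and $m\mapsto m-1$ in the contribution from $Bx$, every surviving power (respectively every fractional integral $I^{k(\alpha-\beta)+m\alpha}_{0+}g$ in the forced part) collects the coefficient $Q_{k,m}^{A,B}-AQ_{k-1,m}^{A,B}-BQ_{k,m-1}^{A,B}$, which vanishes by the generalized Pascal rule \eqref{gen-Pascal} and, on the edges $m=0$, $k=0$, by $Q_{k,0}^{A,B}=A^{k}$, $Q_{0,m}^{A,B}=B^{m}$. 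Hence the homogeneous blocks are annihilated and only $g(t)$ survives, giving \eqref{mtde-1}, while $x(0)=\eta$ and $x'(0)=\hat{\eta}$ follow by reading off the lowest-order terms.

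I expect the main obstacle to be the rigorous differentiation of the forced convolution under the Caputo operators: one must check that the doubly-indexed kernel meets the hypotheses of Theorem \ref{thm-class} uniformly and that the boundary terms generated by the Leibniz rule cancel, leaving exactly the order reduction that yields $g$; equivalently, the semigroup law $\prescript{C}{}{D^{\alpha}_{0+}}I^{\nu}_{0+}g=I^{\nu-\alpha}_{0+}g$ must be justified for abstract Bochner-integrable $g$ at each $\nu\ge\alpha$. The accompanying combinatorial collapse via \eqref{gen-Pascal}, though tedious across the $\eta$-, $\hat{\eta}$- and forced blocks, is purely mechanical once the index shifts are fixed. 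As an alternative I could approximate a general $g\in\mathbb{C}(\mathbb{J},Y)$ by exponentially bounded forcings, apply the previous theorem, and pass to the limit using continuity of the solution map in $g$; the direct verification, however, is more transparent and self-contained.
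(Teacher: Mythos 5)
Your proposal is correct and follows essentially the same route as the paper: the paper likewise abandons the Laplace transform here and verifies the candidate by direct substitution, using the Caputo power rule \eqref{formula} together with the generalized Pascal identity \eqref{gen-Pascal} to collapse the $\eta$- and $\hat{\eta}$-blocks, and the fractional Leibniz rule \eqref{Leibniz} (with Caputo $=$ Riemann--Liouville under zero initial data) to handle the convolution term. The only cosmetic difference is that the paper organizes the verification via superposition, treating the homogeneous problem and the zero-initial-data particular solution (by variation of constants, showing $f=g$) separately, whereas you substitute the full expression at once.
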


\begin{proof}
	For making use of verification by substitution, we apply superposition principle for the initial value problem of linear inhomogeneous multi-order fractional evolution equation \eqref{mtde-1}. 
   For this, firstly let us consider the following homogeneous system with inhomogeneous initial conditions:
	
	\begin{equation} \label{eq:f5hom}
	\begin{cases}
	\left( \prescript{C}{}{D^{\alpha}_{0+}}x\right) (t)-A \left( \prescript{C}{}{D^{\beta}_{0+}}x\right) (t)-Bx(t)=0,\quad t>0\\
	x(0)=\eta , \quad x^{\prime}(0)=\tilde{\eta},
	\end{cases}
	\end{equation}
	has a mild solution 
	\begin{align}\label{x(t)-hom}
	x(t)=&\left(I+ \sum_{k=0}^{\infty}\sum_{m=0}^{\infty}Q_{k,m}^{A,B}B\frac{t^{k(\alpha-\beta)+m\alpha+\alpha}}{\Gamma(k(\alpha-\beta)+m\alpha+\alpha+1)}\right)\eta+
	\sum_{k=0}^{\infty}\sum_{m=0}^{\infty}Q_{k,m}^{A,B}\frac{t^{k(\alpha-\beta)+m\alpha+1}}{\Gamma(k(\alpha-\beta)+m\alpha+2)}\hat{\eta}\nonumber\nonumber\\=&\left( I+t^{\alpha}\mathscr{E}^{A,B}_{\alpha-\beta, \alpha,\alpha+1}(t)B\right) \eta+t\mathscr{E}^{A,B}_{\alpha-\beta, \alpha,2}(t)\tilde{\eta}.
	\end{align}
	With the help of verification by substitution and the property of $Q^{A,B}_{k,m}$ \eqref{gen-Pascal}, we confirm that  \eqref{x(t)-hom} is a mild solution of linear homogeneous fractional evolution equation \eqref{eq:f5hom}: 
	\allowdisplaybreaks
	\begin{align*}
	\left( \prescript{C}{}{D^{\alpha}_{0+}}x\right) (t)&= \prescript{C}{}{D^{\alpha}_{0+}}\left( I+ t^{\alpha}\mathscr{E}^{A,B}_{\alpha-\beta, \alpha,\alpha+1}(t)B\right) \eta+ \prescript{C}{}{D^{\alpha}_{0+}} \left( t\mathscr{E}^{A,B}_{\alpha-\beta, \alpha,2}(t)\right) \hat{\eta}\\
	&=\prescript{C}{}{D^{\alpha}_{0+}}\left(I+ \sum_{k=0}^{\infty}\sum_{m=0}^{\infty}Q^{A,B}_{k,m} B\frac{t^{k(\alpha-\beta)+m\alpha+\alpha}}{\Gamma(k(\alpha-\beta)+m\alpha+\alpha+1)}\right) \eta\\
	&+\prescript{C}{}{D^{\alpha}_{0+}}\left( \sum_{k=0}^{\infty}\sum_{m=0}^{\infty}Q^{A,B}_{k,m} \frac{t^{k(\alpha-\beta)+m\alpha+1}}{\Gamma(k(\alpha-\beta)+m\alpha+2)}\right) \hat{\eta}.
	\end{align*}
	In this case, we first apply the property of $Q^{A,B}_{k,m}$ \eqref{gen-Pascal} before Caputo differentiation the first and second terms above, in accordance with the following formula \cite{Podlubny}:
	\begin{equation}\label{formula}
	\prescript{C}{}D^{\nu}_{0+}\left(\frac{t^{\eta}}{\Gamma(\eta+1)}\right)= 	\begin{cases}
	\frac{t^{\eta-\nu}}{\Gamma(\eta-\nu+1)},\quad\eta>\lfloor\nu\rfloor,\\
	0, \qquad \qquad  \eta=0,1,2,\ldots, \lfloor\nu\rfloor,\\
	\text{undefined},  \qquad \text{otherwise}.
	\end{cases}
	\end{equation}
	Then, we have 
	\begin{align*}
	\left(\prescript{C}{}{D^{\alpha}_{0+}}x\right)(t)&=\prescript{C}{}{D^{\alpha}_{0+}}\Big[B\frac{t^{\alpha}}{\Gamma(\alpha)}+ \sum_{k=1}^{\infty}\sum_{m=0}^{\infty}AQ^{A,B}_{k-1,m} B\frac{t^{k(\alpha-\beta)+m\alpha+\alpha}}{\Gamma(k(\alpha-\beta)+m\alpha+\alpha+1)}\\&+\sum_{k=0}^{\infty}\sum_{m=1}^{\infty}BQ^{A,B}_{k,m-1} B\frac{t^{k(\alpha-\beta)+m\alpha+\alpha}}{\Gamma(k(\alpha-\beta)+m\alpha+\alpha+1)} \Big] \eta\\
	&+\prescript{C}{}{D^{\alpha}_{0+}}\Big[tI+\sum_{k=1}^{\infty}\sum_{m=0}^{\infty}AQ^{A,B}_{k-1,m} \frac{t^{k(\alpha-\beta)+m\alpha+1}}{\Gamma(k(\alpha-\beta)+m\alpha+2)}\\
	&+\sum_{k=0}^{\infty}\sum_{m=1}^{\infty}BQ^{A,B}_{k,m-1} \frac{t^{k(\alpha-\beta)+m\alpha+1}}{\Gamma(k(\alpha-\beta)+m\alpha+2)}\Big] \hat{\eta}\\
	&=B\eta+\sum_{k=1}^{\infty}\sum_{m=0}^{\infty}AQ^{A,B}_{k-1,m} B\frac{t^{k(\alpha-\beta)+m\alpha}}{\Gamma(k(\alpha-\beta)+m\alpha+1)}\eta\\
	&+\sum_{k=0}^{\infty}\sum_{m=1}^{\infty}BQ^{A,B}_{k,m-1} B\frac{t^{k(\alpha-\beta)+m\alpha}}{\Gamma(k(\alpha-\beta)+m\alpha+1)}\eta\\
	&+\sum_{k=1}^{\infty}\sum_{m=0}^{\infty}AQ^{A,B}_{k-1,m} \frac{t^{k(\alpha-\beta)+m\alpha+1-\alpha}}{\Gamma(k(\alpha-\beta)+m\alpha+2-\alpha)}\hat{\eta}\\
	&+\sum_{k=0}^{\infty}\sum_{m=1}^{\infty}BQ^{A,B}_{k,m-1} \frac{t^{k(\alpha-\beta)+m\alpha+1-\alpha}}{\Gamma(k(\alpha-\beta)+m\alpha+2-\alpha)} \hat{\eta}.
	\end{align*}
	Next, we can attain that
	\begin{align*}
	\left(\prescript{C}{}{D^{\alpha}_{0+}}x\right)(t)
	&=B\eta+\sum_{k=0}^{\infty}\sum_{m=0}^{\infty}AQ^{A,B}_{k,m} B\frac{t^{k(\alpha-\beta)+m\alpha+\alpha-\beta}}{\Gamma(k(\alpha-\beta)+m\alpha+\alpha-\beta+1)}\eta\\
	&+\sum_{k=0}^{\infty}\sum_{m=0}^{\infty}BQ^{A,B}_{k,m} B\frac{t^{k(\alpha-\beta)+m\alpha+\alpha}}{\Gamma(k(\alpha-\beta)+m\alpha+\alpha+1)}\eta\\
	&+\sum_{k=0}^{\infty}\sum_{m=0}^{\infty}AQ^{A,B}_{k,m} \frac{t^{k(\alpha-\beta)+m\alpha+1-\beta}}{\Gamma(k(\alpha-\beta)+m\alpha+2-\beta)}\hat{\eta}\\
	&+\sum_{k=0}^{\infty}\sum_{m=0}^{\infty}BQ^{A,B}_{k,m} \frac{t^{k(\alpha-\beta)+m\alpha+1}}{\Gamma(k(\alpha-\beta)+m\alpha+2)} \hat{\eta}.
	\end{align*}
	Then, the Caputo fractional differentiation of $x(t)$ \eqref{x(t)-hom} of order $0<\beta\leq 1$ is as follows:
	
	\begin{align*}
	\left( \prescript{C}{}{D^{\beta}_{0+}}x\right) (t)&= \prescript{C}{}{D^{\beta}_{0+}}\left( I+t^{\alpha} \mathscr{E}^{A,B}_{\alpha-\beta, \alpha,\alpha+1}(t)\right) \eta+ \prescript{C}{}{D^{\beta}_{0+}}\left( t \mathscr{E}^{A,B}_{\alpha-\beta, \alpha,2}(t)\right) \hat{\eta}\\
	&= \prescript{C}{}{D^{\beta}_{0+}}\left(I+ \sum_{k=0}^{\infty}\sum_{m=0}^{\infty}Q^{A,B}_{k,m} B\frac{t^{k(\alpha-\beta)+m\beta+\alpha}}{\Gamma(k(\alpha-\beta)+m\beta+\alpha+1)}\right) \eta\\
	&+\prescript{C}{}{D^{\beta}_{0+}}\left( \sum_{k=0}^{\infty}\sum_{m=0}^{\infty}Q^{A,B}_{k,m} \frac{t^{k(\alpha-\beta)+m\beta+1}}{\Gamma(k(\alpha-\beta)+m\beta+2)}\right) \hat{\eta}\\
	&=\sum_{k=0}^{\infty}\sum_{m=0}^{\infty}Q^{A,B}_{k,m} B\frac{t^{k(\alpha-\beta)+m\beta+\alpha-\beta}}{\Gamma(k(\alpha-\beta)+m\beta+\alpha-\beta+1)}\eta\\
	&+\sum_{k=0}^{\infty}\sum_{m=0}^{\infty}Q^{A,B}_{k,m} \frac{t^{k(\alpha-\beta)+m\beta+1-\beta}}{\Gamma(k(\alpha-\beta)+m\beta+2-\beta)} \hat{\eta}.
	\end{align*}
	Finally, taking a linear combination of above results, we acquire the desired result:
	\begin{align*}
	\left(\prescript{C}{}{D^{\alpha}_{0+}}x\right)(t)-A\left( \prescript{C}{}{D^{\beta}_{0+}}x\right)(t)&=B\eta+\sum_{k=0}^{\infty}\sum_{m=0}^{\infty}BQ^{A,B}_{k,m} B\frac{t^{k(\alpha-\beta)+m\alpha+\alpha}}{\Gamma(k(\alpha-\beta)+m\alpha+\alpha+1)}\eta\\&+\sum_{k=0}^{\infty}\sum_{m=0}^{\infty}BQ^{A,B}_{k,m} \frac{t^{k(\alpha-\beta)+m\alpha+1}}{\Gamma(k(\alpha-\beta)+m\alpha+2)}\hat{\eta}\coloneqq Bx(t).
	\end{align*}
	
	Next, we consider the following linear inhomogeneous fractional evolution equation:
	\begin{equation} \label{eq:f5inhom}
	\left( \prescript{C}{}{D^{\alpha}_{0+}}x\right) (t)-A \left( \prescript{C}{}{D^{\beta}_{0+}}x\right) (t)-Bx(t)=g(t),
	\end{equation}
	with zero initial conditions:
	\begin{equation*}
	x(0)=x'(0)=0,
	\end{equation*}
	 has an integral representation of a mild solution which is a particular solution of \eqref{mtde-1}:
	\begin{align*}
	\bar{x}(t)=\int\limits_{0}^{t}(t-s)^{\alpha-1}\mathscr{E}^{A,B}_{\alpha-\beta, \alpha,\alpha}(t-s)g(s)\mathrm{d}s=\int\limits_{0}^{t}\sum_{k=0}^{\infty}\sum_{m=0}^{\infty}Q^{A,B}_{k,m}\frac{(t-s)^{k(\alpha-\beta)+m\alpha+\alpha-1}}{\Gamma(k(\alpha-\beta)+m\alpha+\alpha)}g(s)\mathrm{d}s,\quad t>0.
	\end{align*}
	In accordance with fractional analogue of variation of constants formula any particular mild solution of inhomogeneous differential equation of fractional-order \eqref{eq:f5inhom} should be looked for in the form of
	
	\begin{equation}
	\bar{x}(t)=\int\limits_{0}^{t}(t-s)^{\alpha-1}\mathscr{E}^{A,B}_{\alpha-\beta, \alpha,\alpha}(t-s)f(s)\mathrm{d}s=\int\limits_{0}^{t}\sum_{k=0}^{\infty}\sum_{m=0}^{\infty}Q^{A,B}_{k,m}\frac{(t-s)^{k(\alpha-\beta)+m\alpha+\alpha-1}}{\Gamma(k(\alpha-\beta)+m\alpha+\alpha)}f(s)\mathrm{d}s, \quad t>0,
	\end{equation}
	where $f(s)$ is unknown function for $s\in [0,t]$ with $\bar{x}(0)=\bar{x}'(0)=0$.
	
	 Because of this homogeneous initial values $\bar{x}(0)=\bar{x}'(0)=0$, it follows that in this case, for any given order either in $(1, 2]$ and $(0, 1]$, the Riemann–Liouville and Caputo type fractional differentiation operators are equal in accordance with \eqref{relation}. Therefore, in the work below we will apply Riemann–Liouville derivative instead of Caputo one to verify the mild solution of evolution equation with two independent fractional-orders.

	Applying the property of a linear operator $Q_{k,m}^{A,B}$ \eqref{gen-Pascal} and having Caputo differentiation of order \\ $1<\alpha\leq 2$ of $\bar{x}(t)$, we obtain:
	\allowdisplaybreaks
	\begin{align*}
	&\left( \prescript{C}{}{D^{\alpha}_{0+}}\bar{x}\right) (t)=\left( \prescript{RL}{}{D^{\alpha}_{0+}}\bar{x}\right) (t)\\&= \prescript{RL}{}{D^{\alpha}_{0+}}\Big[\int\limits_{0}^{t}\frac{(t-s)^{\alpha-1}}{\Gamma(\alpha)}f(s)\mathrm{d}s+\int\limits_{0}^{t}\sum_{k=1}^{\infty}\sum_{m=0}^{\infty}AQ^{A,B}_{k-1,m}\frac{(t-s)^{k(\alpha-\beta)+m\alpha+\alpha-1}}{\Gamma(k(\alpha-\beta)+m\alpha+\alpha)}f(s)\mathrm{d}s\\
	&+\int\limits_{0}^{t}\sum_{k=0}^{\infty}\sum_{m=1}^{\infty}BQ^{A,B}_{k,m-1}\frac{(t-s)^{k(\alpha-\beta)+m\alpha+\alpha-1}}{\Gamma(k(\alpha-\beta)+m\alpha+\alpha)}f(s)\mathrm{d}s\Big]\\
	&=\left( \prescript{RL}{}{D^{\alpha}_{0+}}(\prescript{}{}{I^{\alpha}_{0+}}f)\right)(t)+\prescript{RL}{}{D^{\alpha}_{0+}}\Big[\int\limits_{0}^{t}\sum_{k=0}^{\infty}\sum_{m=0}^{\infty}AQ^{A,B}_{k,m}\frac{(t-s)^{k(\alpha-\beta)+m\alpha+2\alpha-\beta-1}}{\Gamma(k(\alpha-\beta)+m\alpha+2\alpha-\beta)}f(s)\mathrm{d}s\Big]\\
	&+\prescript{RL}{}{D^{\alpha}_{0+}}\Big[\int\limits_{0}^{t}\sum_{k=0}^{\infty}\sum_{m=0}^{\infty}BQ^{A,B}_{k,m}\frac{(t-s)^{k(\alpha-\beta)+m\alpha+2\alpha-1}}{\Gamma(k(\alpha-\beta)+m\alpha+2\alpha)}f(s)\mathrm{d}s\Big].
	\end{align*}

	By making use of the fractional Leibniz integral rules \eqref{Leibniz} in Riemann-Liouville's sense for the second and third terms of the above expression, we get
	\allowdisplaybreaks
	\begin{align*}
	&\left( \prescript{C}{}{D^{\alpha}_{0+}}\bar{x}\right) (t)=\left( \prescript{RL}{}{D^{\alpha}_{0+}}\bar{x}\right) (t)\\
	&=f(t)+\lim\limits_{s\to t-0}\prescript{RL,t}{}{D}^{\alpha-1}_{0+} \left( \sum_{k=0}^{\infty}\sum_{m=0}^{\infty}AQ^{A,B}_{k,m}\lim\limits_{s\to
	t-0}\frac{(t-s)^{k(\alpha-\beta)+m\alpha+2\alpha-\beta-1}}{\Gamma(k(\alpha-\beta)+m\alpha+2\alpha-\beta)}\right) \lim\limits_{s\to t-0}f(s)\\
	&+\lim\limits_{s\to t-0}\prescript{RL,t}{}{D}^{\alpha-2}_{0+}\left( \sum_{k=0}^{\infty}\sum_{m=0}^{\infty}AQ^{A,B}_{k,m}\lim\limits_{s\to
		t-0}\frac{(t-s)^{k(\alpha-\beta)+m\alpha+2\alpha-\beta-1}}{\Gamma(k(\alpha-\beta)+m\alpha+2\alpha-\beta)}\right)\frac{d}{dt}\lim\limits_{s\to t-0}f(s)\\
	&+\int\limits_{0}^{t}\prescript{RL,t}{}{D}^{\alpha}_{0+}\sum_{k=0}^{\infty}\sum_{m=0}^{\infty}AQ^{A,B}_{k,m}\frac{(t-s)^{k(\alpha-\beta)+m\alpha+2\alpha-\beta-1}}{\Gamma(k(\alpha-\beta)+m\alpha+2\alpha-\beta)}f(s)\mathrm{d}s\\
	&+\lim\limits_{s\to t-0}\prescript{RL,t}{}{D}^{\alpha-1}_{0+} \left(\sum_{k=0}^{\infty}\sum_{m=0}^{\infty}AQ^{A,B}_{k,m}\lim\limits_{s\to t-0}\frac{(t-s)^{k(\alpha-\beta)+m\alpha+2\alpha-1}}{\Gamma(k(\alpha-\beta)+m\alpha+2\alpha)}\right) \lim\limits_{s\to t-0}f(s)\\
	&+\lim\limits_{s\to t-0}\prescript{RL,t}{}{D}^{\alpha-2}_{0+}\left( \sum_{k=0}^{\infty}\sum_{m=0}^{\infty}AQ^{A,B}_{k,m}\lim\limits_{s\to
	t-0}\frac{(t-s)^{k(\alpha-\beta)+m\alpha+2\alpha-1}}{\Gamma(k(\alpha-\beta)+m\alpha+2\alpha)}\right)\frac{d}{dt}\lim\limits_{s\to t-0}f(s)\\
	&+\int\limits_{0}^{t}\prescript{RL,t}{}{D}^{\alpha}_{0+}\sum_{k=0}^{\infty}\sum_{m=0}^{\infty}AQ^{A,B}_{k,m}\frac{(t-s)^{k(\alpha-\beta)+m\alpha+2\alpha-1}}{\Gamma(k(\alpha-\beta)+m\alpha+2\alpha)}f(s)\mathrm{d}s\\
	&=f(t)+\lim\limits_{s\to t-0} \sum_{k=0}^{\infty}\sum_{m=0}^{\infty}AQ^{A,B}_{k,m}\lim\limits_{s\to
		t-0}\frac{(t-s)^{k(\alpha-\beta)+m\alpha+\alpha-\beta}}{\Gamma(k(\alpha-\beta)+m\alpha+\alpha-\beta+1)}\lim\limits_{s\to t-0}f(s)\\
	&+\lim\limits_{s\to t-0} \sum_{k=0}^{\infty}\sum_{m=0}^{\infty}AQ^{A,B}_{k,m}\lim\limits_{s\to
	t-0}\frac{(t-s)^{k(\alpha-\beta)+m\alpha+\alpha-\beta+1}}{\Gamma(k(\alpha-\beta)+m\alpha+\alpha-\beta+2)}\frac{d}{dt}\lim\limits_{s\to t-0}f(s)\\
	&+\int\limits_{0}^{t}\sum_{k=0}^{\infty}\sum_{m=0}^{\infty}AQ^{A,B}_{k,m}\frac{(t-s)^{k(\alpha-\beta)+m\alpha+\alpha-\beta-1}}{\Gamma(k(\alpha-\beta)+m\alpha+\alpha-\beta)}f(s)\mathrm{d}s\\
	&+\lim\limits_{s\to t-0}\prescript{RL,t}{}{D}^{\alpha-1}_{0+} \left(\sum_{k=0}^{\infty}\sum_{m=0}^{\infty}AQ^{A,B}_{k,m}\lim\limits_{s\to t-0}\frac{(t-s)^{k(\alpha-\beta)+m\alpha+\alpha}}{\Gamma(k(\alpha-\beta)+m\alpha+\alpha+1)}\right) \lim\limits_{s\to t-0}f(s)\\
	&+\lim\limits_{s\to t-0} \sum_{k=0}^{\infty}\sum_{m=0}^{\infty}AQ^{A,B}_{k,m}\lim\limits_{s\to
	t-0}\frac{(t-s)^{k(\alpha-\beta)+m\alpha+\alpha+1}}{\Gamma(k(\alpha-\beta)+m\alpha+\alpha+2)}\frac{d}{dt}\lim\limits_{s\to t-0}f(s)\\
	&+\int\limits_{0}^{t}\sum_{k=0}^{\infty}\sum_{m=0}^{\infty}AQ^{A,B}_{k,m}\frac{(t-s)^{k(\alpha-\beta)+m\alpha+\alpha-1}}{\Gamma(k(\alpha-\beta)+m\alpha+\alpha)}f(s)\mathrm{d}s\\
	&=f(t)+\int_{0}^{t}\sum_{k=0}^{\infty}\sum_{m=0}^{\infty}AQ^{A,B}_{k,m}\frac{(t-s)^{k(\alpha-\beta)+m\alpha+\alpha-\beta-1}}{\Gamma(k(\alpha-\beta)+m\alpha+\alpha-\beta)}f(s)\mathrm{d}s\\
	&+\int_{0}^{t}\sum_{k=0}^{\infty}\sum_{m=0}^{\infty}BQ^{A,B}_{k,m}\frac{(t-s)^{k(\alpha-\beta)+m\alpha+\alpha-1}}{\Gamma(k(\alpha-\beta)+m\alpha+\alpha)}f(s)\mathrm{d}s
	\end{align*}
	Then, Caputo fractional derivative of $\bar{x}(t)$ of order of $0<\beta\leq 1$ is
	\allowdisplaybreaks
	\begin{align*}
	&\left( \prescript{C}{}{D^{\beta}_{0+}}\bar{x}\right)(t)=\left( \prescript{RL}{}{D^{\beta}_{0+}}\bar{x}\right) (t)\\&=\prescript{RL}{}{D^{\beta}_{0+}}\Big[\int\limits_{0}^{t}\sum_{k=0}^{\infty}\sum_{m=0}^{\infty}Q^{A,B}_{k,m}\frac{(t-s)^{k(\alpha-\beta)+m\alpha+\alpha-1}}{\Gamma(k(\alpha-\beta)+m\alpha+\alpha)}f(s)\mathrm{d}s\Big]\\
	&=\lim\limits_{s\to t-0}\prescript{RL,t}{}{D}^{\beta-1}_{0+}\left( \sum_{k=0}^{\infty}\sum_{m=0}^{\infty}Q^{A,B}_{k,m}\frac{(t-s)^{k(\alpha-\beta)+m\alpha+\alpha-1}}{\Gamma(k(\alpha-\beta)+m\alpha+\alpha)}\right) \lim\limits_{s\to t-0}f(s)\\
	&+\int\limits_{0}^{t}\prescript{RL}{}{D^{\beta}_{0+}}\sum_{k=0}^{\infty}\sum_{m=0}^{\infty}Q^{A,B}_{k,m}\frac{(t-s)^{k(\alpha-\beta)+m\alpha+\alpha-1}}{\Gamma(k(\alpha-\beta)+m\alpha+\alpha)}f(s)\mathrm{d}s\\
	&=\lim\limits_{s\to t-0} \sum_{k=0}^{\infty}\sum_{m=0}^{\infty}Q^{A,B}_{k,m}\frac{(t-s)^{k(\alpha-\beta)+m\alpha+\alpha-\beta}}{\Gamma(k(\alpha-\beta)+m\alpha+\alpha-\beta+1)} \lim\limits_{s\to t-0}f(s)\\
	&+\int\limits_{0}^{t}\sum_{k=0}^{\infty}\sum_{m=0}^{\infty}Q^{A,B}_{k,m}\frac{(t-s)^{k(\alpha-\beta)+m\alpha+\alpha-\beta-1}}{\Gamma(k(\alpha-\beta)+m\alpha+\alpha-\beta)}f(s)\mathrm{d}s\\
	&=\int\limits_{0}^{t}\sum_{k=0}^{\infty}\sum_{m=0}^{\infty}Q^{A,B}_{k,m}\frac{(t-s)^{k(\alpha-\beta)+m\alpha+\alpha-\beta-1}}{\Gamma(k(\alpha-\beta)+m\alpha+\alpha-\beta)}f(s)\mathrm{d}s.
	\end{align*}
	Thus, linear combinations of above results yield that
	\begin{align*}
	&\left( \prescript{C}{}{D^{\alpha}_{0+}}\bar{x}\right) (t)-A \left( \prescript{C}{}{D^{\beta}_{0+}}\bar{x}\right) (t)\\&=f(t)+\int\limits_{0}^{t}\sum_{k=0}^{\infty}\sum_{m=0}^{\infty}BQ^{A,B}_{k,m}\frac{(t-s)^{k(\alpha-\beta)+m\alpha+\alpha-1}}{\Gamma(k(\alpha-\beta)+m\alpha+\alpha)}f(s)\mathrm{d}s\\&=f(t)+B\bar{x}(t)=g(t)+B\bar{x}(t).
	\end{align*}
	Therefore, $f(t)=g(t)$, $t>0$ which confirms the desired verification. The proof is complete.
\end{proof}
Then it follows that by using the substitution $y(t)=E^{-1}x(t)$, we can acquire a mild solution of \eqref{mtde} as below.
\begin{theorem}
	Let $A,B\in\mathscr{B}(Y)$ with non-zero commutator, i.e., $\left[ A, B\right] \coloneqq AB- BA \neq 0$. 
	A mild solution $y(\cdot)\in \mathbb{C}^{2}(\mathbb{J},X)$ of the Cauchy problem \eqref{mtde} can be represented as
	\allowdisplaybreaks
	\begin{align}
	y(t)&=\left(E^{-1}+ \sum_{k=0}^{\infty}\sum_{m=0}^{\infty}E^{-1}Q_{k,m}^{A,B}B\frac{t^{k(\alpha-\beta)+m\alpha+\alpha}}{\Gamma(k(\alpha-\beta)+m\alpha+\alpha+1)}\right)\eta\nonumber\\
	&+
	\sum_{k=0}^{\infty}\sum_{m=0}^{\infty}E^{-1}Q_{k,m}^{A,B}\frac{t^{k(\alpha-\beta)+m\alpha+1}}{\Gamma(k(\alpha-\beta)+m\alpha+2)}\hat{\eta}\nonumber\\
	&+\int\limits_{0}^{t}\sum_{k=0}^{\infty}\sum_{m=0}^{\infty}E^{-1}Q_{k,m}^{A,B}\frac{(t-s)^{k(\alpha-\beta)+m\alpha+\alpha-1}}{\Gamma(k(\alpha-\beta)+m\alpha+\alpha)}g(s)\mathrm{d}s\nonumber\\
	&\coloneqq\left( E^{-1}+t^{\alpha}E^{-1}\mathscr{E}_{\alpha-\beta,\alpha,\alpha+1}^{A,B}(t)B\right) \eta+tE^{-1}\mathscr{E}_{\alpha-\beta,\alpha,2}^{A,B}(t)\hat{\eta}\nonumber\\&+\int\limits_{0}^{t}(t-s)^{\alpha-1}E^{-1}\mathscr{E}_{\alpha-\beta,\alpha,\alpha}^{A,B}(t-s)g(s)\mathrm{d}s, \quad t>0.
	\end{align}
\end{theorem}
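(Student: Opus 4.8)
The plan is to obtain this representation as an immediate consequence of the preceding theorem together with the change of variables $y(t)=E^{-1}x(t)$ that was used to pass from \eqref{mtde} to \eqref{mtde-1}. First I would make the equivalence of the two Cauchy problems rigorous. Writing $x(t)=Ey(t)$, the initial data $Ey(0)=\eta$ and $Ey'(0)=\tilde{\eta}$ translate directly into $x(0)=\eta$ and $x'(0)=\tilde{\eta}$. Because $(H_{4})$ forces $E^{-1}\in\mathscr{B}(Y)$, which is a bounded operator independent of $t$, it may be pulled through the Bochner integrals defining the Caputo derivatives, giving $\left(\prescript{C}{}{D^{\beta}_{0+}}y\right)(t)=E^{-1}\left(\prescript{C}{}{D^{\beta}_{0+}}x\right)(t)$. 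Combining this with the definitions $A\coloneqq A_{0}E^{-1}$ and $B\coloneqq B_{0}E^{-1}$ (both in $\mathscr{B}(Y)$ under $(H_{1})$--$(H_{4})$), the governing equation of \eqref{mtde} becomes exactly \eqref{mtde-1}. Consequently $x(\cdot)$ is a mild solution of \eqref{mtde-1} if and only if $y(\cdot)=E^{-1}x(\cdot)$ is a mild solution of \eqref{mtde}.

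Next I would take the explicit representation of $x(t)$ furnished by the previous theorem and apply the bounded linear operator $E^{-1}$ to both sides. The stated formula is then obtained by moving $E^{-1}$ inside each of the two double series and inside the convolution integral, thereby producing the terms $E^{-1}Q_{k,m}^{A,B}$ and the operator-valued functions $E^{-1}\mathscr{E}_{\alpha-\beta,\alpha,\gamma}^{A,B}(\cdot)$ with $\gamma\in\{\alpha+1,2,\alpha\}$. Since $E^{-1}$ does not depend on the summation indices $k,m$ or on the integration variable $s$, no new operators are introduced beyond the composition $E^{-1}Q_{k,m}^{A,B}$, which matches the claimed expression.

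The main technical point to justify is precisely these interchanges. Moving $E^{-1}$ past the infinite double sums requires the (locally uniform, absolute) convergence of the series defining $\mathscr{E}_{\alpha-\beta,\alpha,\gamma}^{A,B}$, which holds because $\alpha-\beta>0$ and $\alpha>0$; boundedness of $E^{-1}$ then gives $E^{-1}\sum_{k,m}(\cdot)=\sum_{k,m}E^{-1}(\cdot)$ by continuity. Likewise, $E^{-1}\int_{0}^{t}(\cdot)\,\mathrm{d}s=\int_{0}^{t}E^{-1}(\cdot)\,\mathrm{d}s$ follows since bounded operators commute with the Bochner integral. Finally I would check the asserted regularity $y(\cdot)\in\mathbb{C}^{2}(\mathbb{J},X)$: by the previous theorem $x(\cdot)\in\mathbb{C}^{2}(\mathbb{J},Y)$, and since $E^{-1}:Y\to D(E)\subset X$ is bounded, the composition $y=E^{-1}x$ preserves the $\mathbb{C}^{2}$ regularity, completing the argument.
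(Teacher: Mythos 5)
Your proposal is correct and follows exactly the route the paper intends: the paper states this theorem as an immediate consequence of the preceding representation of $x(t)$ for \eqref{mtde-1} via the substitution $y(t)=E^{-1}x(t)$, offering no separate proof. Your additional justifications (pulling the bounded operator $E^{-1}$ through the Caputo derivatives, the double series, and the Bochner integral) only make explicit what the paper leaves implicit.
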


\begin{remark}
Let $A_{0}=\Theta$ be a zero operator. Then, a mild solution $y(\cdot)\in \mathbb{C}^{2}(\mathbb{J},X)$ of the Cauchy problem \eqref{mtde-3} 
\begin{equation}\label{mtde-3}
\begin{cases}
\left( \prescript{C}{}{D^{\alpha}_{0+}}Ey\right)(t)-B_{0}y(t)=g(t), \quad  t>0, \quad 1<\alpha\leq2,\\
y(0)=\eta , \quad y^{\prime}(0)=\tilde{\eta},
\end{cases}
\end{equation}
can be determined by means of two parameter Mittag-Leffler functions as follows
\begin{align}
y(t)&=\sum_{m=0}^{\infty}E^{-1}B^{m}\frac{t^{m\alpha}}{\Gamma(m\alpha+1)}\eta+
\sum_{m=0}^{\infty}E^{-1}B^{m}\frac{t^{m\alpha+1}}{\Gamma(m\alpha+2)}\hat{\eta}
+\int\limits_{0}^{t}\sum_{m=0}^{\infty}E^{-1}B^{m}\frac{(t-s)^{m\alpha+\alpha-1}}{\Gamma(m\alpha+\alpha)}g(s)\mathrm{d}s\nonumber\\
&\coloneqq E^{-1}E_{\alpha,1}(Bt^{\alpha})\eta+tE^{-1}E_{\alpha,2}(Bt^{\alpha})\hat{\eta}+\int\limits_{0}^{t}(t-s)^{\alpha-1}E^{-1}E_{\alpha,\alpha}(B(t-s)^{\alpha})g(s)\mathrm{d}s, \quad t>0.
\end{align}
\end{remark}

Similar problem to \eqref{mtde-3} has been considered in \cite{Feckan-1} for Sobolev type functional evolution equations with fractional-order as follows:
\begin{equation}\label{mtde-4}
\begin{cases}
 \prescript{C}{0}{D}^{q}_{t}(Ex(t))+Ax(t)=f(t,x_{t}), \quad  t\in J\coloneqq[0,a],\\
x(t)=\phi(t) , \quad -r\leq t\leq 0,
\end{cases}
\end{equation}

where $\prescript{C}{0}{D}^{q}_{t}$ is the Caputo fractional derivative of order $0<q<1$ with lower limit zero. The operators $A: D(A)\subset X \to Y$ and $E:D(E)\subset X \to Y$, where $X,Y$are Banach spaces. Moreover, $f(\cdot,\cdot):J\times C\to Y$ with $C\coloneqq C\left([-r,0],X\right)$. $x(\cdot):J^{*}\coloneqq[-r,a]\to X$ is continuous, $x_{t}$ is the element of $C$ defined by $x_{t}(s)\coloneqq x(t+s)$, $-r\leq s\leq 0$. The domain $D(E)$ of $E$ becomes a Banach space with norm $\|x\|_{D(E)}\coloneqq\|Ex\|_{Y}$, $x\in D(E)$ and $\phi\in C(E)\coloneqq C([-r,0],D(E))$.

Feckan et al. \cite{Feckan-1} have introduced the following assumptions on the operators $A$ and $E$:

$(\hat{H}_{1})$: $A$ and $E$ are linear operators and $A$ is closed;

$(\hat{H}_{2})$: $D(E)\subset D(A)$ and $E$ is bijective;

$(\hat{H}_{3})$: Linear operator $E^{-1}: Y \to D(E)\subset X$ is compact.

By making use of the substitution $x(t)=E^{-1}y(t)$, under the hypotheses $(\hat{H}_{1})$-$(\hat{H}_{3})$, we transform the Sobolev type fractional-order functional evolution system \eqref{mtde-4} to the  following evolution system with a linear bounded operator $\hat{A}\coloneqq-AE^{-1}:Y\to Y$:

\begin{equation}\label{mtde-5}
\begin{cases}
\prescript{C}{0}{D^{\alpha}_{t}}y (t) -\hat{A}y(t)=f(t,E^{-1}y_{t}), \quad t\in J,\\
y(t)=\varphi(t) , \quad -r\leq t\leq 0,
\end{cases}
\end{equation}
where $y(\cdot):J^{*}\to Y$, $\varphi(t)=E\phi(t), t \in [-r,0]$ and $y_{t}(s)=y(t+s), s\in[-r,0]$.

A mild solution of an initial value problem for functional evolution equation \eqref{mtde-5} can be expressed by means of classical Mittag-Leffler type functions as

\begin{align}
y(t)=E_{q,1}(\hat{A}t^{q})\varphi(0)+\int\limits_{0}^{t}(t-s)^{q-1}E_{q,q}(\hat{A}(t-s)^{q})f(s,E^{-1}y_{s})\mathrm{d}s, \quad t>0.
\end{align}
Thus, the mild solution of Sobolev type functional evolution equation of fractional-order should be represented by 
\begin{align}\label{correct}
x(t)=E^{-1}E_{q,1}(\hat{A}t^{q})E\phi(0)+\int\limits_{0}^{t}(t-s)^{q-1}E^{-1}E_{q,q}(\hat{A}(t-s)^{q})f(s,x_{s})\mathrm{d}s,\quad t>0.
\end{align}
However, the mild solution of \eqref{mtde-4} was represented via characteristic solution operators (see Lemma 3.1 in \cite{Feckan-1}) instead of Mittag-Leffler functions generated by a linear operator $\hat{A}\coloneqq-AE^{-1}\in \mathscr{B}(Y)$. It should be stressed out that if $E=I$, then a mild solution of \eqref{mtde-4} can be expressed with the help of characteristic solution operators (see Remark 3.1 in \cite{Feckan-1}), otherwise, under hypotheses $(\hat{H}_{1})-(\hat{H}_{3})$ it should be determined by classical Mittag-Leffler functions with two parameters  which are compact linear operators in $Y$ as \eqref{correct}. 
\begin{remark}
	In particular case, we consider the following initial value problem for multi-dimensional multi-term fractional differential equation with noncommutative matrices
	\begin{equation}\label{multi-1}
	\begin{cases}
	\left( \prescript{C}{}{D^{\alpha}_{0+}}y\right) (t) -A_{0} \left( \prescript{C}{}{D^{\beta}_{0+}}y\right) (t)-B_{0}y(t)=g(t), \quad  t>0,\\
	y(0)=\eta , \quad y^{\prime}(0)=\tilde{\eta},
	\end{cases}
	\end{equation}
	where $\prescript{C}{}{D^{\alpha}_{0+}}$ and $ \prescript{C}{}{D^{\beta}_{0+}}$ Caputo fractional derivatives of orders $1<\alpha\leq2$ and $0<\beta\leq1$, respectively, with the lower limit zero. $E=I\in\mathbb{R}^{n\times n}$ is an identity matrix,  the matrices $A_{0},B_{0}\in \mathbb{R}^{n\times n}$ are nonpermutable i.e., $AB\neq BA$, $y(t)\in \mathbb{R}^{n}$ is a vector-valued function on $\mathbb{J}$, i.e., $y(\cdot):\mathbb{J}\to \mathbb{R}^{n}$ and $\eta, \hat{\eta}\in \mathbb{R}^{n}$. In addition, a forced term $g(\cdot): \mathbb{J}\to \mathbb{R}^{n}$ is a continuous function.
	
	The exact analytical representation of solution $y(\cdot)\in \mathbb{C}^{2}(\mathbb{J},\mathbb{R}^{n})$ of \eqref{multi-1} can be expressed by
	
	\begin{align}
	y(t)&\coloneqq\left( 1+t^{\alpha}\mathscr{E}_{\alpha-\beta,\alpha,\alpha+1}^{A_{0},B_{0}}(t)B_{0}\right) \eta+t\mathscr{E}_{\alpha-\beta,\alpha,2}^{A_{0},B_{0}}(t)\hat{\eta}+\int\limits_{0}^{t}(t-s)^{\alpha-1}\mathscr{E}_{\alpha-\beta,\alpha,\alpha}^{A_{0},B_{0}}(t-s)g(s)\mathrm{d}s\nonumber\\&=\left(1+ \sum_{k=0}^{\infty}\sum_{m=0}^{\infty}Q_{k,m}^{A_{0},B_{0}}B_{0}\frac{t^{k(\alpha-\beta)+m\alpha+\alpha}}{\Gamma(k(\alpha-\beta)+m\alpha+\alpha+1)}\right)\eta+
	\sum_{k=0}^{\infty}\sum_{m=0}^{\infty}Q_{k,m}^{A_{0},B_{0}}\frac{t^{k(\alpha-\beta)+m\alpha+1}}{\Gamma(k(\alpha-\beta)+m\alpha+2)}\hat{\eta}\nonumber\\
	&+\int\limits_{0}^{t}\sum_{k=0}^{\infty}\sum_{m=0}^{\infty}Q_{k,m}^{A_{0},B_{0}}\frac{(t-s)^{k(\alpha-\beta)+m\alpha+\alpha-1}}{\Gamma(k(\alpha-\beta)+m\alpha+\alpha)}g(s)\mathrm{d}s, \quad t>0.
	\end{align}
\end{remark}

\section{A representation of solutions of \eqref{mtde} with permutable linear operators}\label{sec:4}
To get an analytical representation of a mild solution of \eqref{mtde-1} with permutable linear operators i.e., $AB=BA$, first, we need to prove auxiliary lemma for making use of Laplace integral transform
according to the Theorem \ref{thm2}. Moreover, the scalar analogue of following theorem has been considered by Ahmadova and Mahmudov for fractional Langevin equations with constant coefficients in \cite{Ahmadova-Mahmudov}. In general, the following theorem is true for $\alpha>0$, $\alpha>\beta$ and $\gamma\in \mathbb{R}$.
\begin{theorem} \label{Q^A,B-per}
	Let $m \in \mathbb{N}_{0}$ and $Re(s)>0$. For $A,B\in\mathscr{B}(Y)$ with $[A,B]=AB-BA=0$, we have:
	\begin{align*}
	\mathscr{L}^{-1}\Bigl\{ \frac{s^{\gamma}B^{m}}{(s^{\alpha}I-A s^{\beta})^{m+1}}\Bigr\}(t)&= t^{m\alpha+\alpha+\gamma-1}\sum_{k=0}^{\infty}\binom{k+m}{m}\frac{A^{k}B^{m}t^{k(\alpha-\beta)}}{\Gamma(k(\alpha-\beta)+m\alpha+\alpha-\gamma)}\\
	&=t^{m\alpha+\alpha-\gamma-1}E^{m+1}_{\alpha-\beta, m\alpha+\alpha-\gamma}(A t^{\alpha-\beta})B^{m}. 
	\end{align*}
\end{theorem}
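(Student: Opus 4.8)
The plan is to reduce the operator-valued resolvent power to a form directly covered by the Prabhakar Laplace transform identity \eqref{ad}, and then expand the three-parameter Mittag-Leffler series to recover the stated double series. First I would factor the operator pencil: since $s^{\alpha}I - As^{\beta} = s^{\beta}(s^{\alpha-\beta}I - A)$ and the two factors commute, we have $(s^{\alpha}I - As^{\beta})^{-(m+1)} = s^{-(m+1)\beta}(s^{\alpha-\beta}I - A)^{-(m+1)}$, so that
\[
\frac{s^{\gamma}B^{m}}{(s^{\alpha}I - As^{\beta})^{m+1}} = s^{\gamma-(m+1)\beta}(s^{\alpha-\beta}I - A)^{-(m+1)}B^{m}.
\]
Next I would pull out the leading power from the resolvent, writing $(s^{\alpha-\beta}I-A)^{-(m+1)} = s^{-(m+1)(\alpha-\beta)}(I - As^{-(\alpha-\beta)})^{-(m+1)}$; collecting the scalar powers of $s$ then leaves the prefactor $s^{-(m\alpha+\alpha-\gamma)}$ in front of a factor of the form $(I-\lambda s^{-\alpha'})^{-\gamma'}$.

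With this normalization the expression matches the right-hand side of \eqref{ad} under the identification $\alpha' = \alpha-\beta$, $\beta' = m\alpha+\alpha-\gamma$, $\gamma' = m+1$, and $\lambda = A$. Invoking the operator-valued version of Lemma \ref{Lem:PrabhLap} then yields
\[
\mathscr{L}^{-1}\Bigl\{\frac{s^{\gamma}B^{m}}{(s^{\alpha}I - As^{\beta})^{m+1}}\Bigr\}(t) = t^{m\alpha+\alpha-\gamma-1}E^{m+1}_{\alpha-\beta,\,m\alpha+\alpha-\gamma}(At^{\alpha-\beta})B^{m}.
\]
Finally I would expand the three-parameter Mittag-Leffler function via Definition \ref{Def:ML}, using the identity $(m+1)_{k}/k! = \binom{k+m}{m}$ together with the commutativity $A^{k}B^{m} = B^{m}A^{k}$, to recover the explicit series $\sum_{k=0}^{\infty}\binom{k+m}{m}A^{k}B^{m}t^{k(\alpha-\beta)+m\alpha+\alpha-\gamma-1}/\Gamma(k(\alpha-\beta)+m\alpha+\alpha-\gamma)$, which is the first displayed form.

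The main obstacle will be justifying the operator-valued form of the Prabhakar identity. Concretely, one must expand $(I-As^{-(\alpha-\beta)})^{-(m+1)}$ as the generalized binomial (Neumann-type) series $\sum_{k=0}^{\infty}\binom{k+m}{m}(As^{-(\alpha-\beta)})^{k}$, which converges in $\mathscr{B}(Y)$ precisely when $\|A\|\,s^{-(\alpha-\beta)} < 1$, i.e. for $\operatorname{Re}(s)$ sufficiently large, as guaranteed by Lemma \ref{sumA} and its iteration. One then interchanges this sum with the inverse Laplace transform; the termwise inversion uses only $\mathscr{L}^{-1}\{s^{-\rho}\}(t) = t^{\rho-1}/\Gamma(\rho)$, and the interchange is legitimate by the uniform convergence of the series on the Bromwich contour, exactly as in the proof of Lemma \ref{Q^A,B}. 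Here the hypothesis $AB = BA$ is precisely what permits the clean factorization of the pencil and the collapse $A^{k}B^{m} = B^{m}A^{k}$; without it one is forced back into the $Q^{A,B}_{k,m}$ machinery of the nonpermutable case.
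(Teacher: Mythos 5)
Your proposal is correct and follows essentially the same route as the paper: both factor the pencil as $s^{\alpha}I-As^{\beta}=s^{\beta}(s^{\alpha-\beta}I-A)$, expand $(I-As^{-(\alpha-\beta)})^{-(m+1)}$ as the generalized binomial series $\sum_{k}\binom{k+m}{m}A^{k}s^{-k(\alpha-\beta)}$ valid for $\|A\|<|s|^{\alpha-\beta}$, invert termwise via $\mathscr{L}^{-1}\{s^{-\rho}\}(t)=t^{\rho-1}/\Gamma(\rho)$, and remove the auxiliary restriction on $s$ by analytic continuation. Your framing through an operator-valued version of Lemma \ref{Lem:PrabhLap} is only a cosmetic reordering, since, as you note yourself, justifying that identity reduces to exactly the paper's series computation.
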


\begin{proof}
	By using the Taylor series representation of $\frac{1}{(1-t)^{m+1}}, m \in \mathbb{N}_{0}$ of the form
	\begin{equation*}
	\frac{1}{(1-t)^{m+1}}= \sum_{k=0}^{\infty}\binom{k+m}{m}t^{k}, \quad |t|<1,
	\end{equation*}
	we achieve that
	\begin{align*}
	\frac{s^{\gamma}B^{m}}{(s^{\alpha}I-A s^{\beta})^{m+1}}=\frac{s^{\gamma}B^{m}}{(s^{\alpha }I)^{m+1}}\frac{1}{(1-\frac{A}{s^{\alpha-\beta}} )^{m+1}}&=\frac{s^{\gamma}B^{m}}{s^{(m+1)\alpha}}\sum_{k=0}^{\infty}\binom{k+m}{m}\Big(\frac{A}{s^{\alpha-\beta}}\Big)^{k}\\
	&=\sum_{k=0}^{\infty}\binom{k+m}{m}\frac{A^{k}B^{m}}{s^{(m+1)\alpha+k(\alpha-\beta)-\gamma}}.
	\end{align*}
	By using the inverse Laplace integral formula for the above function, we get the desired result:
	\begin{align*}
	\mathscr{L}^{-1}\Bigl\{ \frac{s^{\gamma}B^{m}}{(s^{\alpha}I-A s^{\beta})^{m+1}}\Bigr\}(t)&=\sum_{k=0}^{\infty}A^{k}B^{m}\binom{k+m}{m}\mathscr{L}^{-1}\Bigr\{\frac{1}{s^{k(\alpha-\beta)+(m+1)\alpha-\gamma}}\Bigr\}(t)\\
	&=\sum_{k=0}^{\infty}A^{k}B^{m}\binom{k+m}{m}\frac{t^{k(\alpha-\beta)+m\alpha+\alpha-\gamma-1}}{\Gamma(k(\alpha-\beta)+m\alpha+\alpha-\gamma)}\\
	&=t^{m\alpha+\alpha-\gamma-1}E^{m+1}_{\alpha-\beta, m\alpha+\alpha-\gamma}(A t^{\alpha-\beta})B^{m}.
	\end{align*}
	We have required an extra condition on $s$ such that
	\begin{equation*}
	 s^{\alpha-\beta}>\|A\|,
	\end{equation*}
	for proper convergence of the series. But, this condition can be removed at the end of calculation since analytic
	continuation of both sides, to give the desired result for all $ s \in \mathbb{C}$ which is satisfying $Re(s)>0$.
\end{proof}

Then, we acquire analytical representation of mild solution for multi-term fractional evolution equation with permutable linear bounded operators via the following theorem.

\begin{theorem}	Let $A,B\in\mathscr{B}(Y)$ with zero commutator, i.e., $\left[ A, B\right] \coloneqq AB- BA = 0$. Assume that $g(\cdot): \mathbb{J} \to X$ and $\left(\prescript{C}{}{D^{\beta}_{0^{+}}x}\right) (\cdot)$ for $0<\beta\leq1$ are exponentially bounded.
	A mild solution $x(\cdot)\in \mathbb{C}^{2}(\mathbb{J},Y)$ of the Cauchy problem \eqref{mtde-1} can be represented by means of bivariate Mittag-Leffler type functions \eqref{bivtype} as follows
	\begin{align}
	x(t)&=\left( I+t^{\alpha}BE_{\alpha-\beta,\alpha,\alpha+1}(At^{\alpha-\beta},Bt^{\alpha})\right) \eta+tE_{\alpha-\beta,\alpha,2}(At^{\alpha-\beta},Bt^{\alpha})\hat{\eta}+t^{\alpha-1}E_{\alpha-\beta,\alpha,\alpha}(At^{\alpha-\beta},Bt^{\alpha})\ast g(t)\nonumber\\&=\left(I+ \sum_{k=0}^{\infty}\sum_{m=0}^{\infty}\binom{k+m}{m}A^{k}B^{m+1}\frac{t^{k(\alpha-\beta)+m\alpha+\alpha}}{\Gamma(k(\alpha-\beta)+m\alpha+\alpha+1)}\right)\eta\nonumber\\&+
	\sum_{k=0}^{\infty}\sum_{m=0}^{\infty}\binom{k+m}{m}A^{k}B^{m}\frac{t^{k(\alpha-\beta)+m\alpha+1}}{\Gamma(k(\alpha-\beta)+m\alpha+2)}\hat{\eta}\nonumber\\
	&+\int\limits_{0}^{t}\sum_{k=0}^{\infty}\sum_{m=0}^{\infty}\binom{k+m}{m}A^{k}B^{m}\frac{(t-s)^{k(\alpha-\beta)+m\alpha+\alpha-1}}{\Gamma(k(\alpha-\beta)+m\alpha+\alpha)}g(s)\mathrm{d}s, \quad t>0.
	\end{align}
\end{theorem}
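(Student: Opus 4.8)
The plan is to follow the same Laplace-transform strategy used for the non-permutable case, now exploiting commutativity to replace the operator $Q_{k,m}^{A,B}$ by binomial coefficients via Theorem~\ref{Q^A,B-per}. First I would invoke Theorem~\ref{thm2} (with $T=\infty$) to guarantee that $x(\cdot)$ and its Caputo derivatives $\left(\prescript{C}{}{D^{\alpha}_{0+}}x\right)(\cdot)$, $\left(\prescript{C}{}{D^{\beta}_{0+}}x\right)(\cdot)$ are exponentially bounded, so that all the Laplace transforms below exist. Applying $\mathscr{L}$ to both sides of \eqref{mtde-1} and using $\mathscr{L}\left\{\prescript{C}{}{D^{\alpha}_{0+}}x\right\}(s)=s^{\alpha}X(s)-s^{\alpha-1}\eta-s^{\alpha-2}\hat{\eta}$ together with $\mathscr{L}\left\{\prescript{C}{}{D^{\beta}_{0+}}x\right\}(s)=s^{\beta}X(s)-s^{\beta-1}\eta$ yields the algebraic relation $\left(s^{\alpha}I-As^{\beta}-B\right)X(s)=s^{\alpha-1}\eta+s^{\alpha-2}\hat{\eta}-s^{\beta-1}A\eta+G(s)$.

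Next I would solve for $X(s)$ and simplify the two $\eta$-terms. Writing $R(s)\coloneqq\left(s^{\alpha}I-As^{\beta}-B\right)^{-1}$ and using $s^{\alpha}I-As^{\beta}=\left(s^{\alpha}I-As^{\beta}-B\right)+B$, the identity $s^{-1}R(s)\left(s^{\alpha}I-As^{\beta}\right)\eta=s^{-1}\eta+s^{-1}R(s)B\eta$ delivers the clean form
\begin{align*}
X(s)&=s^{-1}\eta+s^{-1}R(s)B\eta+s^{\alpha-2}R(s)\hat{\eta}+R(s)G(s).
\end{align*}
This rearrangement is purely algebraic and independent of commutativity; it is exactly the step already performed in the non-permutable proof, so I would simply reuse it.

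The genuinely commutative step is the resolvent expansion. For $s$ large enough that $\left\|\left(s^{\alpha}I-As^{\beta}\right)^{-1}B\right\|<1$, Lemma~\ref{sumA} combined with $AB=BA$ (so that $B$ commutes with $\left(s^{\alpha}I-As^{\beta}\right)^{-1}$) gives the geometric series $R(s)=\sum_{m=0}^{\infty}\left(s^{\alpha}I-As^{\beta}\right)^{-(m+1)}B^{m}$. Substituting this into each term of $X(s)$ and inverting term by term through Theorem~\ref{Q^A,B-per}, taking $\gamma=-1$ for the $s^{-1}R(s)B\eta$ term, $\gamma=\alpha-2$ for the $s^{\alpha-2}R(s)\hat{\eta}$ term, and $\gamma=0$ for the forcing term $R(s)G(s)$, produces the three-parameter Mittag-Leffler operators $t^{m\alpha+\alpha}E^{m+1}_{\alpha-\beta,m\alpha+\alpha+1}(At^{\alpha-\beta})$, $t^{m\alpha+1}E^{m+1}_{\alpha-\beta,m\alpha+2}(At^{\alpha-\beta})$ and $t^{m\alpha+\alpha-1}E^{m+1}_{\alpha-\beta,m\alpha+\alpha}(At^{\alpha-\beta})$ respectively (the last one convolved against $g$). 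Expanding each $E^{m+1}$ into its defining series, interchanging the $k$- and $m$-summations, and identifying $\binom{k+m}{m}A^{k}B^{m}$ as the coefficient in \eqref{bivtype} then assembles the three double series into the bivariate Mittag-Leffler functions $E_{\alpha-\beta,\alpha,\alpha+1}(At^{\alpha-\beta},Bt^{\alpha})$, $E_{\alpha-\beta,\alpha,2}(At^{\alpha-\beta},Bt^{\alpha})$ and $E_{\alpha-\beta,\alpha,\alpha}(At^{\alpha-\beta},Bt^{\alpha})$ asserted in the statement.

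The main obstacle I anticipate is the rigorous justification of the term-by-term inversion: one must check that the double series converges uniformly on compact $t$-intervals, so that interchanging $\mathscr{L}^{-1}$ with the infinite sum over $m$, and with the convolution integral in the forcing term, is legitimate. This should follow from the locally uniform convergence of the Mittag-Leffler series \eqref{ML-2} and the boundedness of $A,B\in\mathscr{B}(Y)$, but it is the step demanding genuine care. The auxiliary large-$s$ restriction $s^{\alpha-\beta}>\|A\|$ used in Theorem~\ref{Q^A,B-per} is then removed by analytic continuation, exactly as done there.
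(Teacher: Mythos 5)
Your proposal is correct and follows essentially the same route as the paper's own proof: Laplace transform of \eqref{mtde-1}, the algebraic rearrangement of the $\eta$-terms, the commutative resolvent expansion $\left(s^{\alpha}I-As^{\beta}-B\right)^{-1}=\sum_{m=0}^{\infty}\left(s^{\alpha}I-As^{\beta}\right)^{-(m+1)}B^{m}$, and term-by-term inversion via Theorem~\ref{Q^A,B-per} with the same choices of $\gamma$, followed by reassembly into the bivariate Mittag-Leffler series and removal of the large-$s$ restriction by analytic continuation. No substantive differences to report.
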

\begin{proof}
	We recall that the existence of Laplace transform of $x(\cdot)$ and its Caputo derivatives $\left(\prescript{C}{}{D^{\alpha}_{0^{+}}x}\right) (\cdot)$ and $\left(\prescript{C}{}{D^{\beta}_{0^{+}}x}\right) (\cdot)$  for $ 1<\alpha\leq 2$ and $0<\beta\leq 1$, respectively, is guaranteed by Theorem \ref{thm2}.
	Thus, to find the mild solution $x(t)$ of \eqref{mtde} with permutable linear operators, i.e., $AB=BA$, we can use the Laplace transform technique. By assuming $T=\infty$, applying the Laplace transform technique on both sides of equation \eqref{mtde-1} and solving the equation with respect to the $X(s)$, we get 
	\begin{align*}
	X(s)
	&=s^{-1}\eta+s^{-1}\left( s^{\alpha}I-As^{\beta}-B\right)^{-1}B\eta+s^{\alpha-2} \left( s^{\alpha}I-As^{\beta}-B\right)^{-1}\hat{\eta}\\&+\left( s^{\alpha-1}I-As^{\beta}-B\right)^{-1}G(s).
	\end{align*}
	
	On the other hand, since \eqref{operator} for sufficiently large $s$, we have
	\begin{equation*}
	\|\left( s^{\alpha}I-As^{\beta}\right) ^{-1}B\|<1.
	\end{equation*}
	
	Then, for permutable linear operators $A,B\in \mathscr{B}(Y)$ and sufficiently large $s$, one can attain
	\begin{align*}
	\left( s^{\alpha}I-As^{\beta}-B\right)^{-1}
	&=\left(s^{\alpha}I-A s^{\beta}\right)^{-1}\left( I-\left( s^{\alpha}I-As^{\beta}\right) ^{-1}B\right)^{-1} \\
	&=\left(s^{\alpha}I-A s^{\beta}\right)^{-1}\sum_{m=0}^{\infty} \left(s^{\alpha}I-As^{\beta} \right)^{-m}B^{m}\\
	&=\sum_{m=0}^{\infty}\frac{B^{m}}{\left(s^{\alpha}I-As^{\beta} \right)^{(m+1)}}.
	\end{align*}
	Then taking inverse Laplace transform, we have
	\allowdisplaybreaks
	\begin{align} \label{y(t)}
	x(t)=\mathscr{L}^{-1}\left\lbrace s^{-1}\right\rbrace(t) \eta&+ \mathscr{L}^{-1}\left\lbrace\sum_{m=0}^{\infty}\frac{s^{-1}B^{m+1}}{\left(s^{\alpha}I-As^{\beta} \right)^{(m+1)}}\right\rbrace (t)B\eta \nonumber\\
	&+\mathscr{L}^{-1}\left\lbrace\sum_{m=0}^{\infty}\frac{s^{\alpha-2}B^{m}}{\left(s^{\alpha}I-As^{\beta} \right)^{(m+1)}}\right\rbrace (t)\hat{\eta}\nonumber\\
	&+\mathscr{L}^{-1}\left\lbrace \sum_{m=0}^{\infty}\frac{B^{m}}{\left(s^{\alpha}I-As^{\beta} \right)^{(m+1)}} G(s)\right\rbrace (t).
	\end{align}
	Therefore, in accordance with Theorem \eqref{Q^A,B-per}, we acquire
	\begin{align}
	x(t)&=\left\lbrace I+ \sum_{k=0}^{\infty}\sum_{m=0}^{\infty}\binom{k+m}{k}\frac{A^{k}B^{m+1}t^{k(\alpha-\beta)+m\alpha+\alpha}}{\Gamma(k(\alpha-\beta)+m\alpha+\alpha+1)}\right\rbrace \eta\nonumber\\&+
	\sum_{k=0}^{\infty}\sum_{m=0}^{\infty}\binom{k+m}{k}\frac{A^{k}B^{m}t^{k(\alpha-\beta)+m\alpha+1}}{\Gamma(k(\alpha-\beta)+m\alpha+2)}\hat{\eta}\nonumber\\
	&+\int\limits_{0}^{t}\sum_{k=0}^{\infty}\sum_{m=0}^{\infty}\binom{k+m}{k}\frac{A^{k}B^{m}(t-s)^{k(\alpha-\beta)+m\alpha+\alpha-1}}{\Gamma(k(\alpha-\beta)+m\alpha+\alpha)}g(s)\mathrm{d}s\nonumber\\
	&\coloneqq\left( I+t^{\alpha}BE_{\alpha-\beta,\alpha,\alpha+1}(At^{\alpha-\beta},Bt^{\alpha})\right) \eta+tE_{\alpha-\beta,\alpha,2}(At^{\alpha-\beta},Bt^{\alpha})\hat{\eta}\nonumber\\&+\int\limits_{0}^{t}(t-s)^{\alpha-1}E_{\alpha-\beta,\alpha,\alpha}(A(t-s)^{\alpha-\beta},B(t-s)^{\alpha})g(s)\mathrm{d}s, \quad t>0.	
	\end{align}
\end{proof}
\begin{remark}
The analytical mild solution for the initial value problem for \eqref{mtde-1} can be attained from the property of $Q_{k,m}^{A,B}$ \eqref{commutative} for linear bounded operators $A,B\in\mathscr{B}(Y)$ satisfying $AB=BA$ where
\begin{equation*}
Q_{k,m}^{A,B}=\binom{k+m}{m}A^{k}B^{m}, \quad  k,m \in \mathbb{N}_{0}.
\end{equation*}
\end{remark}

It should be emphasized that the assumption on the exponential boundedness of the function $g(\cdot)$ and $\left( \prescript{C}{}{D^{\beta}_{0+}}x\right) (\cdot)$ for $0<\beta\leq 1$ ($\left( \prescript{C}{}{D^{\alpha}_{0+}}x\right) (\cdot)$ for $1<\alpha\leq 2$ ) can be omitted for the case of permutable linear bounded operators, too.

\begin{theorem}
	Let $A,B\in\mathscr{B}(Y)$ with zero commutator, i.e., $\left[ A, B\right] \coloneqq AB- BA= 0$. 
	A mild solution $x(\cdot)\in \mathbb{C}^{2}(\mathbb{J},Y)$ of the Cauchy problem \eqref{mtde-1} can be expressed as
	\begin{align}\label{form-per}
	x(t)&=\left(I+ \sum_{k=0}^{\infty}\sum_{m=0}^{\infty}\binom{k+m}{m}A^{k}B^{m+1}\frac{t^{k(\alpha-\beta)+m\alpha+\alpha}}{\Gamma(k(\alpha-\beta)+m\alpha+\alpha+1)}\right)\eta\nonumber\\&+
	\sum_{k=0}^{\infty}\sum_{m=0}^{\infty}\binom{k+m}{m}A^{k}B^{m}\frac{t^{k(\alpha-\beta)+m\alpha+1}}{\Gamma(k(\alpha-\beta)+m\alpha+2)}\hat{\eta}\nonumber\\
	&+\int\limits_{0}^{t}\sum_{k=0}^{\infty}\sum_{m=0}^{\infty}\binom{k+m}{m}A^{k}B^{m}\frac{(t-s)^{k(\alpha-\beta)+m\alpha+\alpha-1}}{\Gamma(k(\alpha-\beta)+m\alpha+\alpha)}g(s)\mathrm{d}s, \quad t>0.
	\end{align}
\end{theorem}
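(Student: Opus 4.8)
The plan is to derive \eqref{form-per} from the verification-by-substitution argument already carried out in the non-permutable case, specialized through the commutativity identity \eqref{commutative}. When $AB=BA$ we have $Q_{k,m}^{A,B}=\binom{k+m}{m}A^{k}B^{m}$ for all $k,m\in\mathbb{N}_{0}$, so each double series in \eqref{form-per} is exactly the corresponding series of the general mild-solution formula with the operator coefficient $Q_{k,m}^{A,B}$ replaced by its commutative value; equivalently, in Mittag-Leffler notation the candidate solution is $\left(I+t^{\alpha}BE_{\alpha-\beta,\alpha,\alpha+1}(At^{\alpha-\beta},Bt^{\alpha})\right)\eta+tE_{\alpha-\beta,\alpha,2}(At^{\alpha-\beta},Bt^{\alpha})\hat{\eta}+\int_{0}^{t}(t-s)^{\alpha-1}E_{\alpha-\beta,\alpha,\alpha}(A(t-s)^{\alpha-\beta},B(t-s)^{\alpha})g(s)\,\mathrm{d}s$. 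It therefore suffices to check that the whole verification goes through under this substitution, with no appeal to exponential boundedness.

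First I would observe that the verification-by-substitution proof of the non-permutable theorem used only three ingredients: the recurrence \eqref{gen-Pascal}, the termwise Caputo/Riemann--Liouville differentiation formula \eqref{formula}, and the fractional Leibniz rule \eqref{Leibniz} (the latter applied after noting that the zero initial data of the particular solution make Caputo and Riemann--Liouville derivatives coincide by \eqref{relation}). None of these uses the commutator $[A,B]$; in particular, under $AB=BA$ the recurrence \eqref{gen-Pascal} reduces to the classical Pascal identity $\binom{k+m}{m}=\binom{k+m-1}{m}+\binom{k+m-1}{m-1}$ governing the scalars in front of $A^{k}B^{m}$. Hence the entire chain of computations reproduces verbatim under $Q_{k,m}^{A,B}\mapsto\binom{k+m}{m}A^{k}B^{m}$.

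Concretely, I would invoke the superposition principle and split the candidate into a homogeneous part carrying the data $\eta,\hat{\eta}$ and a particular part $\bar{x}(t)$ with $\bar{x}(0)=\bar{x}'(0)=0$. For the homogeneous part one differentiates each series termwise via \eqref{formula} and uses Pascal's rule to telescope $\left(\prescript{C}{}{D^{\alpha}_{0+}}x\right)(t)-A\left(\prescript{C}{}{D^{\beta}_{0+}}x\right)(t)$ down to $Bx(t)$, while the initial conditions follow by evaluating the series at $t=0$. For the particular part one writes $\bar{x}$ with an unknown forcing $f$, differentiates under the integral sign using \eqref{Leibniz}, observes that every boundary limit term vanishes because its power of $(t-s)$ has a strictly positive exponent (since $\alpha>1\ge\beta>0$ gives $\alpha-\beta>0$, whence $k(\alpha-\beta)+m\alpha+\alpha-\beta>0$ and similarly for the remaining exponents), and arrives at $f(t)+B\bar{x}(t)$; matching with \eqref{mtde-1} forces $f=g$.

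The only point requiring genuine care --- and the step I expect to be the main obstacle --- is justifying the termwise operations, namely interchanging the infinite double sums with the fractional differentiation operators and with the convolution integral, and applying \eqref{Leibniz} series-by-series so that the hypotheses of Theorem \ref{thm-class} hold for each summand. Since $A$ and $B$ are bounded, the commutative coefficients satisfy $\|\binom{k+m}{m}A^{k}B^{m}\|\le\binom{k+m}{m}\|A\|^{k}\|B\|^{m}$, so the candidate series and all of its differentiated counterparts converge locally uniformly in $t$; these bounds are if anything simpler than in the non-permutable case, and they legitimize dominated convergence throughout. With this in hand the representation \eqref{form-per} is verified directly, and crucially no exponential-boundedness assumption on $g(\cdot)$ or on $\left(\prescript{C}{}{D^{\beta}_{0+}}x\right)(\cdot)$ is required, which is precisely the improvement over the Laplace-transform version.
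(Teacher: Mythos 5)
Your proposal is correct and follows essentially the same route as the paper: the paper's own proof is a brief sketch that invokes exactly the ingredients you identify — the classical Pascal identity $\binom{k+m}{m}=\binom{k+m-1}{m}+\binom{k+m-1}{m-1}$ in place of the operator recurrence \eqref{gen-Pascal}, the termwise differentiation formula \eqref{formula}, and the fractional Leibniz rule \eqref{Leibniz} — and refers back to the detailed verification-by-substitution carried out in the non-permutable case. Your added attention to justifying the termwise interchanges via the bound $\|\binom{k+m}{m}A^{k}B^{m}\|\le\binom{k+m}{m}\|A\|^{k}\|B\|^{m}$ is a welcome refinement but does not change the argument.
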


\begin{proof}
For linear homogeneous and inhomogeneous cases, by using the following Pascal identity for binomial coefficients:
\begin{equation*}
\binom{k+m}{m}=\binom{k+m-1}{m}+\binom{k+m-1}{m-1}, \quad  k, m\in \mathbb{N},
\end{equation*}	
 the formula \eqref{formula} and fractional Leibniz integral rules \eqref{Leibniz}, it can be easily shown that \eqref{form-per} is a mild solution of the Cauchy problem for \eqref{mtde-1} with permutable linear bounded operators. Moreover, this case have considered by Mahmudov et al. for multi-dimensional Bagley-Torvik equations with permutable matrices in \cite{Mahmudov-Huseynov-Aliev-Aliev}.
\end{proof}

\begin{theorem}	Let $A,B\in\mathscr{B}(Y)$ with zero commutator, i.e., $\left[ A, B\right] \coloneqq AB- BA = 0$. A mild solution $y(\cdot)\in \mathbb{C}^{2}(\mathbb{J},X)$ of the Cauchy problem \eqref{mtde} can be determined as below
	\begin{align}
	y(t)&=\left( E^{-1}+t^{\alpha}E^{-1}BE_{\alpha-\beta,\alpha,\alpha+1}(At^{\alpha-\beta},Bt^{\alpha})\right) \eta+tE^{-1}E_{\alpha-\beta,\alpha,2}(At^{\alpha-\beta},Bt^{\alpha})\hat{\eta}\nonumber\\&+t^{\alpha-1}E^{-1}E_{\alpha-\beta,\alpha,\alpha}(At^{\alpha-\beta},Bt^{\alpha})\ast g(t)\nonumber\\&=\left(E^{-1}+ \sum_{k=0}^{\infty}\sum_{m=0}^{\infty}\binom{k+m}{m}E^{-1}A^{k}B^{m+1}\frac{t^{k(\alpha-\beta)+m\alpha+\alpha}}{\Gamma(k(\alpha-\beta)+m\alpha+\alpha+1)}\right)\eta\nonumber\\&+
	\sum_{k=0}^{\infty}\sum_{m=0}^{\infty}\binom{k+m}{m}E^{-1}A^{k}B^{m}\frac{t^{k(\alpha-\beta)+m\alpha+1}}{\Gamma(k(\alpha-\beta)+m\alpha+2)}\hat{\eta}\nonumber\\
	&+\int\limits_{0}^{t}\sum_{k=0}^{\infty}\sum_{m=0}^{\infty}\binom{k+m}{m}E^{-1}A^{k}B^{m}\frac{(t-s)^{k(\alpha-\beta)+m\alpha+\alpha-1}}{\Gamma(k(\alpha-\beta)+m\alpha+\alpha)}g(s)\mathrm{d}s, \quad t>0.
	\end{align}
\end{theorem}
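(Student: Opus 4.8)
The plan is to reduce the statement to the permutable-operator result already established for \eqref{mtde-1} and then transport it back through the substitution $y(t)=E^{-1}x(t)$. Under the hypotheses $(H_1)$–$(H_4)$ the operators $A\coloneqq A_0E^{-1}$ and $B\coloneqq B_0E^{-1}$ belong to $\mathscr{B}(Y)$, and since $E^{-1}$ is a constant bounded operator it commutes with the Caputo differentiation operators; hence the change of variables $x(t)=Ey(t)$ carries \eqref{mtde} into \eqref{mtde-1} with the same data $\eta,\tilde{\eta}$, while conversely $y(t)=E^{-1}x(t)$ recovers a solution of \eqref{mtde} from one of \eqref{mtde-1}. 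Because $[A,B]=0$ is assumed here, the preceding theorem applies and furnishes the explicit series/integral representation \eqref{form-per} of the mild solution $x(\cdot)\in\mathbb{C}^2(\mathbb{J},Y)$, which is the object I would take as given.

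First I would verify that $y(t)=E^{-1}x(t)$ is a genuine mild solution of \eqref{mtde}. By $(H_4)$ the operator $E^{-1}$ is bounded, so $y\in\mathbb{C}^2(\mathbb{J},X)$ with $y^{(i)}(t)=E^{-1}x^{(i)}(t)$ for $i=0,1,2$, and the initial conditions $Ey(0)=x(0)=\eta$, $Ey'(0)=x'(0)=\tilde{\eta}$ hold as required. Substituting $Ey=x$, $A_0y=A_0E^{-1}x=Ax$ and $B_0y=B_0E^{-1}x=Bx$ into \eqref{mtde} reproduces \eqref{mtde-1}, confirming that $y$ solves the original Sobolev type problem.

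The remaining computational step is to pass $E^{-1}$ through the representation \eqref{form-per}. Left-multiplying the formula for $x(t)$ by $E^{-1}$ and distributing it term by term places $E^{-1}$ in front of each coefficient $\binom{k+m}{m}A^kB^{m}$ (respectively $A^kB^{m+1}$) and in front of the convolution kernel, which is precisely the two displayed forms of $y(t)$ in the statement, written once via the bivariate Mittag-Leffler functions $E_{\alpha-\beta,\alpha,\gamma}(At^{\alpha-\beta},Bt^{\alpha})$ of \eqref{per} and once as the explicit double series.

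The one point that genuinely requires care — and the main obstacle — is the interchange of the bounded operator $E^{-1}$ with the infinite double sums and with the Bochner integral appearing in \eqref{form-per}. I would justify this by using that $E^{-1}\in\mathscr{B}(Y)$ is continuous, that the double series defining $E_{\alpha-\beta,\alpha,\gamma}(At^{\alpha-\beta},Bt^{\alpha})$ converge absolutely and locally uniformly in $t$ (so continuity lets $E^{-1}$ commute with the summation), and that $E^{-1}$ may be taken inside the integral by dominated convergence together with the continuity of $g$ on the compact interval $\mathbb{J}$. Once these interchanges are licensed, the identification of the two expressions for $y(t)$ is purely formal, and the proof is complete.
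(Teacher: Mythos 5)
Your proposal is correct and follows essentially the same route as the paper: the paper states this theorem as an immediate consequence of the preceding representation of the mild solution $x(\cdot)$ of \eqref{mtde-1} with permutable operators, obtained by the substitution $y(t)=E^{-1}x(t)$ set up in Section \ref{sec:3}. Your additional care in justifying the passage of the bounded operator $E^{-1}$ through the double series (by locally uniform convergence) and through the Bochner integral is a welcome elaboration of a step the paper leaves implicit, but it does not change the argument.
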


\begin{remark}
	In a special case, the exact analytical representation of solution $y(\cdot)\in \mathbb{C}^{2}(\mathbb{J},\mathbb{R}^{n})$ of Cauchy problem for multi-dimensional  fractional differential equation with multi-orders and permutable matrices $A_{0},B_{0} \in  \mathbb{R}^{n\times n}$  i.e., $A_{0}B_{0}= B_{0}A_{0}$ \eqref{multi-1} can be represented by,
	\begin{align}
y(t)&=\left(1+ \sum_{k=0}^{\infty}\sum_{m=0}^{\infty}\binom{k+m}{m}A_{0}^{k}B_{0}^{m+1}\frac{t^{k(\alpha-\beta)+m\alpha+\alpha}}{\Gamma(k(\alpha-\beta)+m\alpha+\alpha+1)}\right)\eta\nonumber\\&+
\sum_{k=0}^{\infty}\sum_{m=0}^{\infty}\binom{k+m}{m}A_{0}^{k}B_{0}^{m}\frac{t^{k(\alpha-\beta)+m\alpha+1}}{\Gamma(k(\alpha-\beta)+m\alpha+2)}\hat{\eta}\nonumber\\
&+\int\limits_{0}^{t}\sum_{k=0}^{\infty}\sum_{m=0}^{\infty}\binom{k+m}{m}A_{0}^{k}B_{0}^{m}\frac{(t-s)^{k(\alpha-\beta)+m\alpha+\alpha-1}}{\Gamma(k(\alpha-\beta)+m\alpha+\alpha)}g(s)\mathrm{d}s, \quad t>0.
\end{align}
\end{remark}

\section{Discussion and future work}\label{sec:concl}
In this research work, first, we convert Sobolev type fractional  evolution equation with multi-orders \eqref{mtde} to multi-term fractional evolution equation with linear bounded operators \eqref{mtde-1}. Secondly, we give the sufficient conditions to guarantee the rationality of solving multi-term fractional differential equations with linear bounded operators by the Laplace transform method. Then we solve linear inhomogeneous fractional evolution equation with nonpermutable \& permutable linear bounded operators $A,B\in\mathscr{B}(Y)$ by making use of Laplace integral transform. 
Next we  propose exact analytical representation of a mild solution of \eqref{mtde-1} and \eqref{mtde}, respectively with the help of newly defined Mittag-Leffler type function which is generated by linear bounded operators by removing the strong condition which is an exponential boundedness of  a forced term and one of fractional orders with the help of analytical methods, namely:  verification by substitution and fractional analogue of variation of constants formula.

	The main contributions of this paper are as follows:

\begin{itemize}
	\item we introduce a new Mittag-Leffler type function which is generated by linear bounded operators $A,B\in\mathscr{B}(Y)$ via a double infinity series ;
	\item we derive new properties of Mittag-Leffler type function which are useful tool for checking the candidate solutions of multi-term fractional differential equations;
	\item we propose the property of $Q_{k,m}^{A,B}$ with nonpermutable linear operators $A,B\in \mathscr{B}(Y)$ which is a generalization of well-known Pascal's rule binomial coefficients.
	\item we acquire the analytical representation of a mild solution for linear Sobolev type fractional  multi-term  evolution equations with nonpermutable  and permutable linear operators;
	\item we derive the exact analytical representation of multi-dimensional fractional differential equations with two independent orders and  nonpermutable \& permutable matrices. 
\end{itemize}

The possible directions for future work in which to extend the results of this paper is  looking at Sobolev type fractional  functional  evolution equations with multi-orders \eqref{f-w}.
Furthermore, one can expect the results of this paper to hold for a class of problems such as Sobolev type functional evolution system governed by
\begin{equation}\label{f-w}
\begin{cases}
\left( \prescript{C}{}{D^{\alpha}_{0+}}Ey\right) (t) -A_{0} \left( \prescript{C}{}{D^{\beta}_{0+}}y\right) (t)=B_{0}y(t-\tau)+g(t), \quad 1\geq \alpha >\beta >0, \\
Ey(\tau)=E\phi(t),\quad -\tau \leq t \leq 0.
\end{cases}
\end{equation}
It would be interesting to see how the theorems
proved above can be extended to these cases. Another direction in which we would like to investigate stability and approximate controllability results for Sobolev type multi-term fractional differential equations \eqref{mtde} and \eqref{f-w}.

\end{document}